\documentclass[11pt]{article}
\usepackage{macros}

\theoremstyle{plain}
\newtheorem{theorem}{Theorem}[section]
\newtheorem{proposition}[theorem]{Proposition}

\newtheorem{lemma}[theorem]{Lemma}
\newtheorem{claim}[theorem]{Claim}

\theoremstyle{definition}
\newtheorem{definition}[theorem]{Definition}

\theoremstyle{remark}
\newtheorem{remark}[theorem]{Remark}

\definecolor{AfonsoBlue}{RGB}{30,65,123}
\definecolor{TimGreen}{rgb}{0.172549,0.627451,0.172549}

\definecolor{PNNOrange}{RGB}{242,140,40}

\title{Convergence of Kikuchi matrices\\[-0.15em]
  to $\Gamma$-independent and $q$-Gaussian limits}

\author{%
  \makebox[\textwidth][c]{%
    Afonso S. Bandeira\thanks{Department of Mathematics, ETH Z\"urich. \texttt{bandeira@math.ethz.ch}}
    \qquad
    Dmitriy Kunisky\thanks{Department of Applied Mathematics \& Statistics, Johns Hopkins University. \texttt{kunisky@jhu.edu}}
    \qquad
    Petar Nizi\'{c}-Nikolac\thanks{Department of Mathematics, ETH Z\"urich. \texttt{petar.nizic-nikolac@ifor.math.ethz.ch}}
  }\\[0.8em]
  \makebox[\textwidth][c]{%
    Lucas Pesenti\thanks{Department of Mathematics, ETH Z\"urich. \texttt{lpesenti@ethz.ch}}
    \qquad\qquad
    Robert Wang\thanks{Cheriton School of Computer Science, University of Waterloo. \texttt{robert.wang2@uwaterloo.ca}}
  }
}

\date{\today}

\begin{document}

\maketitle

\begin{abstract}
    Kikuchi matrices are a family of structured matrices that were introduced to study problems involving tensors and hypergraphs.
    We show that, as the ambient dimension
    grows, dense random Kikuchi matrices
    have a limit described by a system of $\Gamma$-independent semicircular elements.
    This characterizes their limiting spectral distribution and yields improved bounds on their spectral norm, a key quantity in the analysis of algorithms for Tensor PCA.
    Finally, we show that, in an appropriate double limit, independent Kikuchi matrices converge to the $q$-Gaussian system, another central object in noncommutative probability.
\end{abstract}

\section{Introduction}

Kikuchi matrices are a family of structured matrices that were recently
introduced to reduce problems involving higher-degree polynomials, hypergraphs, or tensors to more tractable quadratic, graph, or matrix problems.
For many applications, the bottleneck in this approach lies in precisely estimating the spectral norm
of Kikuchi matrices as the ambient dimension
grows. 
In this paper, we study the
{\em limit} of Kikuchi matrices, focusing on the dense random Kikuchi matrices that arise in the analysis of algorithms for tensor PCA~\cite{WeinElAlaouiMoore2019,tensorPCAQuantum,BCSvH24,Manohar-2024-KikuchiMatrixMethod,kothari2025smoothtradeofftensorpca}. 

Given integers $n,\ell,r$ satisfying
$1\le r\le \ell\le n-r$, the 
{\em Kikuchi matrix} $Y_n=Y_n(\ell,r)$ 
is the random matrix with entries
\begin{equation}\label{eq:KikuchiYSTconstruction}
    Y_n[U,V] \defeq \begin{cases}  \dfrac 1 {\sqrt{d_n}} \;g_{U\mathbin\triangle V} & \text{if $|U\mathbin \triangle V|=2r$}\\
    0 & \text{otherwise}
    \end{cases}\qquad \text{for }U,V\in \binom {[n]} \ell\,,
\end{equation}
where $g_I$ are independent standard Gaussian random variables for all $I\subseteq [n]$ of size $|I|=2r$. Here, $U\mathbin \triangle V = (U\cup V)\setminus (U\cap V)$ denotes
the symmetric difference between sets, and
\begin{equation}
    d_n\defeq d_n(\ell,r)= \binom \ell r \binom {n-\ell} r\label{eq:degreeKikuchi}
\end{equation}
is the normalization that makes the expected squared $\ell_2$-norm of each row of the Kikuchi matrix equal to $1$.
Our goal is to describe the limit as $n\to\infty$ of the
spectrum and norm of $Y_n$ for fixed $\ell,r$. 

Henceforth and unless stated otherwise, asymptotics are with respect to $n\to\infty$ for fixed $\ell,r$. Similarly,
``with high probability'' should be understood as with probability tending to $1$ as $n\to\infty$.

\paragraph{Tensor PCA.} One motivation for understanding the spectrum of the Kikuchi
matrix is the {\em tensor PCA} problem~\cite{tensorPCADefinition}. In one of its idealized forms, we aim to recover a vector of spins $x\in\{-1,1\}^{n}$ from order-$2r$ tensor measurements. For each set $I\subseteq [n]$ of size $|I|=2r$, we observe
\[
    \mathcal{T}_I = \lambda_n \prod_{i\in I}x_i + g_I\,,
\]
where the $g_I$ are i.i.d.\ standard Gaussians and $\lambda_n> 0$ is the signal strength. To apply the ``Kikuchi matrix method'', given these
observations, for any $r\le \ell\le n-r$, one can form the matrix indexed by $U,V\in \binom {[n]} \ell$ with entries
\[
    M_n[U,V] \defeq \begin{cases}
        \mathcal{T}_{U\mathbin \triangle V} & \text{if $|U\mathbin \triangle V| = 2r$,}\\
        0 & \text{otherwise.}
    \end{cases}
\]
When $\lambda_n$ is large enough, $M_n$ has an outlier
eigenvector that can be used to recover $x$ up to a global sign.
To understand the spectrum of $M_n$, we may assume without loss of generality that $x$ is the all-ones
vector. Then $M_n = \lambda _nA_n + \sqrt {d_n} \,Y_n$, where $A_n$ is the
adjacency matrix of a regular graph (the Johnson graph) whose degree is $d_n$ defined in~\eqref{eq:degreeKikuchi}, and $Y_n$ is the Kikuchi matrix defined in~\eqref{eq:KikuchiYSTconstruction}. We
therefore
expect the signal to be visible in the spectrum
of $M_n$ whenever
$\lambda_n d_n > \sqrt d_n\, \|Y_n\|$; the analysis reduces to estimating the norm of $Y_n$.\footnote{Here we allow for a constant factor of inaccuracy in understanding the size of $\lambda_n$ necessary to create an outlier; understanding the exact threshold would raise the more complicated question of Baik-Ben Arous-P\'{e}ch\'{e} (BBP) transitions with a Kikuchi noise matrix, which we do not consider here.}
We point the reader to~\cite{BCSvH24} for a sharp
derivation of the detection threshold of tensor PCA
using this construction when $\ell<\frac 32r$.\footnote{The asymptotics of~\eqref{eq:degreeKikuchi} suggested an intriguing phenomenon for Tensor PCA: the detection threshold decreases as $\ell\to\infty$. This was the subject of~\cite[Conjecture 9]{bandeira2025randomstrasse101}, subsequently resolved in~\cite{kothari2025smoothtradeofftensorpca}. As a direct consequence of Theorem~\ref{thm:kothariXuImprovement} below, we further improve the dependence on $r$ to at most a polynomial factor away from optimal.}

\paragraph{Prior work.} 
We first review previous bounds on the norm of Kikuchi matrices.
A simple application of the
classical
noncommutative Khintchine inequality~\cite{NCK} gives, with high probability,
\begin{equation}\label{eq:usingNCK}
    \|Y_n\|\lesssim \sqrt{\log \binom n \ell} \lesssim \sqrt {\ell \log n}\,.
\end{equation}
This bound is used, for instance, in the first application of Kikuchi matrices to tensor PCA~\cite{WeinElAlaouiMoore2019}.
However, in many applications of this inequality, the logarithmic factor is known to be an artifact of the analysis and can be removed with more careful or application-specific calculations. 
To address this issue, a series of works~\cite{BBvH23,BCSvH24,bandeira2025randommatrices} developed improved noncommutative Khintchine inequalities based on ideas from free probability. In particular, when specialized to Kikuchi matrices, these inequalities 
show that, in the regime $r\le \ell< 2r$,
the spectrum of $Y_n$ is asymptotically semicircular and $\|Y_n\|=2+o(1)$ with high probability~\cite{BCSvH24}.
However, these techniques break down once $\ell\ge 2r$, a regime in which the Kikuchi matrix
does not asymptotically 
have a semicircular distribution of eigenvalues.
This was shown formally by Kothari and Xu~\cite{kothari2025smoothtradeofftensorpca}: using the trace method and a delicate 
charging argument, they
established that instead we have $\|Y_n\| \asymp_r \sqrt \ell$ in a large regime of $\ell\le n^{\delta(r)}$, up to constants depending only on $r$.

In this paper, we initiate a program to determine the {\em exact} asymptotic spectrum
and spectral norm of Kikuchi matrices, for arbitrary fixed $\ell$ and $r$. 

\paragraph{Weak and strong convergence.}
The limiting object associated
with a single Kikuchi matrix
is a bounded linear operator $A:\calH\to \calH$
on an infinite-dimensional Hilbert space $\calH$.
We refer the reader to Section~\ref{sec:operatorNotation} for our notation for operators on Hilbert spaces.
Our results are formulated more generally for tuples of
independent Kikuchi matrices, in terms of {\em weak} and {\em strong} convergence.

\begin{definition}
    Fix $k\ge 1$. Let $(\calH_n)_{n\ge 1}$ and $\calH$ be
    Hilbert spaces, with states $\tau_n$ on $\calB(\calH_n)$ and $\tau$ on $\calB(\calH)$.
    For each $n\ge 1$, let $\bd A_n = (A_{n,1},\ldots,A_{n,k})$ be a
    tuple of random self-adjoint elements of
    $\calB(\calH_n)$, and let $\bd A = (A_1, \ldots, A_k)$ be a tuple of self-adjoint elements of $\calB(\calH)$. We say that
    \begin{enumerate}
        \item $\bd A_n$ converges to $\bd A$ {\em weakly} if
        \[
            \tau_n(P(\bd A_n)) \underset{n\to\infty}\longrightarrow \tau(P(\bd A))\quad \text{in probability},
        \]
        for every noncommutative polynomial $P\in \C\iprod{X_1, \ldots, X_k}$.
        \item $\bd A_n$ converges to $\bd A$ {\em strongly} if $\bd A_n$ converges to $\bd A$ weakly, and
        \[
            \|P(\bd A_n)\| \underset{n\to\infty}\longrightarrow \|P(\bd A)\|\quad \text{in probability},
        \]
        for every noncommutative polynomial $P\in \C\iprod{X_1, \ldots, X_k}$.
    \end{enumerate}
\end{definition}

\noindent In the special case where $\calH_n$ is finite-dimensional, the random operators $A_{n,i}$ are random matrices, and we take $\tau_n=\tr$ to be the normalized trace (the usual trace divided by the dimension of the matrix), while $\norm{\,\cdot\,}$ is the usual spectral norm. For a single matrix, weak convergence corresponds to convergence of the spectral distribution, while strong convergence additionally captures convergence of the spectral norm.
Powerful new techniques for establishing
strong convergence have emerged in recent 
years~\cite{HT05,CGVTVH25,CGVvH24,vanhandel2025strongconvergence}.

\paragraph{$\Gamma$-independent systems.} Next, we define the candidate limiting operator.
It is built from a family of operators with semicircular distributions whose joint distribution satisfies a notion of independence known as \emph{$\Gamma$-independence}. This notion generalizes both free and classical independence.
The construction below is due to Bo\.zejko and Speicher~\cite{proofGammaRepresentation}, with a simplified formulation due to Oliveira Santos, Tripathi, and Youssef~\cite{youssefTuran}.

\begin{definition}\label{def:traceMonoid}
    Let $\Gamma=(V,E)$ be an undirected graph. The set of
    words $\bigsqcup_{k\ge 0} V^k$ on the alphabet $V$
    has a monoid structure given by
    concatenation. We denote by $w\cdot w'$
    the concatenation of two words $w,w'$.
    The {\em trace monoid} $\calT=\calT(\Gamma)$ is the
    quotient monoid obtained
    by identifying two words whenever one can be obtained from the other by repeatedly interchanging consecutive letters that are adjacent in $\Gamma$. We denote by $\varnothing$ the element of the
    trace monoid corresponding to the empty word.
\end{definition}

For example, if $\Gamma$ is the graph on $V=\{a,b,c\}$ with a single edge between $a$ and $b$, then we have $a\cdot b\cdot c = b\cdot a\cdot c$ in the trace monoid, whereas $a\cdot b\cdot c \neq a\cdot c\cdot b$. At the two extremes, the trace monoid is the free monoid when $\Gamma$ has no edges, and the free commutative monoid when $\Gamma$ is complete.

\begin{definition}\label{def:gammaSystem}
    Let $\calT$ be the trace monoid
    associated with the graph $\Gamma=(V,E)$.
    Consider the
    complex Hilbert space $\ell^2(\calT)$ with canonical basis $\{\delta_w : w\in \calT\}$. We equip $\calB(\ell^2(\calT))$ with the state
    $\tau(L) \defeq \iprod{\delta_\varnothing, L \delta_{\varnothing}}$.
    For each $u\in V$, let $L_u\in \calB(\ell^2(\calT))$ be the {\em creation operator}, which is the linear map such that $L_u(\delta_w) = \delta_{w\cdot u}$. We call $(L_u + L_u^\ast)_{u\in V}$ a {\em $\Gamma$-independent semicircular system}. 
\end{definition}

Here, $L_u^\ast$ denotes the adjoint of
the creation operator $L_u$. On a basis element,
the effect of $L_u^\ast$ is to remove an occurrence of $u$ on the right of the word, if possible, and to map to zero otherwise:
\[
    L_u^\ast (\delta_w) = \begin{cases}\delta_{w'} & \text{ if $w = w'\cdot u$ for some $w'\in \calT$,}\\
    0 & \text{ otherwise.} \end{cases}
\]
The factorization $w=w'\cdot u$ is understood as an equality in the trace monoid (in particular, such a
$w'$, when it exists, is unique). The adjoint of a creation operator is often called an \emph{annihilation operator}.

In a $\Gamma$-independent semicircular system $(S_u)_{u\in V}$, each $S_u$ has a semicircular distribution, in the sense that, for each $p \geq 1$, $\tau(S_u^{2p})$ is the $p$-th Catalan number and $\tau(S_u^{2p + 1}) = 0$. Furthermore, $S_u$ and $S_v$ are classically independent when $u$ and $v$ are adjacent in $\Gamma$, and freely independent otherwise.
The generalized notion of independence satisfied by these operators is called \emph{$\Gamma$-independence}~\cite{gammaIndependenceDefinition,CGVvH24}.\footnote{The same notion has also been referred to as $\varepsilon$-free independence~\cite{charlesworthCollins}, $\Lambda$-freeness~\cite{lambdaFreeness}, or $\Gamma$-right-angled independence~\cite{mageeThomas} in prior works.}
See Lemma~\ref{lem:momentsGamma} for a general formula for computing the joint moments of a $\Gamma$-independent semicircular system.

The graphs $\Gamma$ of interest in this paper are the {Kneser graphs}.

\begin{definition}\label{def:kneserGraph}
    For $\ell,r\ge 1$, the {\em Kneser graph} $K(\ell,r)$
    is the graph on the vertex set $\binom {[\ell]} r$, with an edge between $a$ and $b$ when $a\cap b=\varnothing$. For $k\ge 1$, let $K^{(k)}(\ell,r)$ denote the graph with vertex set $[k]\times \binom {[\ell]} r$, where $(i,a)$ and $(j,b)$ are adjacent when $a\cap b=\varnothing$.
\end{definition}

\subsection{Our results}

\paragraph{The $\Gamma$-independent limit.} In this paper, we identify the limit of Kikuchi matrices: it is given by a $\Gamma$-independent semicircular system, where $\Gamma$ is a Kneser graph.
Our first main result establishes 
weak convergence
of tuples of independent Kikuchi matrices.

\begin{theorem}\label{thm:weakIntro}
    Fix $k\ge 1$ and $1\le r\le \ell$. Let $\{S_{i,a} : i\in [k],\, a\in \binom {[\ell]} r\}$ be
    a $K^{(k)}(\ell,r)$-independent semicircular system. Define
    \begin{equation}
        \Sigma_i = \Sigma_i(\ell,r) \defeq {\binom \ell r}^{-1/2} \sum_{a\in \binom {[\ell]} r} S_{i,a}\qquad \forall i\in [k]\,.\label{eq:limitOperator}
    \end{equation}
    For each $n\ge \ell+r$, let ${\bd Y_{\!n}} = (Y_{n,1}, \ldots, Y_{n,k})$ be a $k$-tuple of independent copies of the Kikuchi matrix $Y_n(\ell,r)$ defined in~\eqref{eq:KikuchiYSTconstruction}.
    Then $\bd Y_{\!n}$ converges weakly as
    $n\to\infty$ to $\bd \Sigma = (\Sigma_1, \ldots, \Sigma_k)$.
\end{theorem}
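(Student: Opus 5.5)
We prove weak convergence by the moment method: we show that for every word $i_1,\ldots,i_p\in[k]$,
\[
\mathbb E\,\tr\bigl(Y_{n,i_1}\cdots Y_{n,i_p}\bigr)\longrightarrow \tau\bigl(\Sigma_{i_1}\cdots\Sigma_{i_p}\bigr),
\]
together with a variance bound $\mathrm{Var}\,\tr(Y_{n,i_1}\cdots Y_{n,i_p})\to 0$. Chebyshev's inequality then gives convergence in probability, and linearity extends it to all noncommutative polynomials.

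\textbf{Decomposition of the matrix.} First decompose each $Y_{n,i}$ according to ``which part of $U$ is removed''. For $|U\mathbin\triangle V|=2r$ we have $U\setminus V=a$ with $|a|=r$ and $V\setminus U=b$ with $|b|=r$, where $a\subseteq U$ and $b\cap U=\varnothing$.

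\textbf{Expansion of the moment.} Expanding the trace, $\mathbb E\,\tr(Y_{n,i_1}\cdots Y_{n,i_p})$ equals $\binom n\ell^{-1}d_n^{-p/2}$ times a sum over closed walks $U_0,U_1,\ldots,U_p=U_0$ of $\mathbb E\prod_t g^{(i_t)}_{U_{t-1}\mathbin\triangle U_t}$. By Wick's formula this is a sum over pair partitions $\pi$ of $[p]$ in which paired steps carry the same colour and the same $2r$-set. For fixed $\pi$, count the walks. There are $\binom n\ell$ choices of $U_0$, and each step introduces at most $r$ new elements from outside and chooses $r$ elements of the current set to remove. A step that opens a pair contributes $\binom\ell r\binom{n-\ell}{r}=d_n$ choices. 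A closing step is forced up to lower-order choices. So the total is $\binom n\ell d_n^{p/2}(1+o(1))$ times the number of ``admissible'' configurations, and non-admissible configurations (reusing fresh elements, or a closing step whose $2r$-set is not the symmetric difference it must reverse) cost a factor $n^{-1}$.

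\textbf{Identifying the limit combinatorially.} Label each opening step by the pair (colour $i$, the positions $a\in\binom{[\ell]}r$ of the removed elements relative to a canonical ordering). The genuinely subtle point is to fix a labelling of $U$ so that ``position'' is well defined along the walk. I would track elements by slot: the fresh elements replace removed ones in the same slots, so $U_t$ is always described by $\ell$ slots. A step with label $(i,a)$ rewrites slots $a$. The step closing the pair $(s,t)$ must restore exactly the elements written at step $s$, which requires the slots $a$ to be untouched in between, except for steps that are themselves opened and closed within the interval. In the limit, therefore, a pair partition with labels contributes $1$ precisely when every step opened inside the interval of a pair $(s,t)$ and still open at $t$ has a label disjoint from $a$, i.e.\ adjacent in $K^{(k)}(\ell,r)$. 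This is exactly the crossing rule in the moment formula for $\Gamma$-independent semicircular systems (Lemma~\ref{lem:momentsGamma}): crossings are allowed only between commuting (adjacent) letters. Summing over the $\binom\ell r^{p/2}$ label choices against the normalisation $\binom\ell r^{-p/2}$ in~\eqref{eq:limitOperator} matches $\tau(\Sigma_{i_1}\cdots\Sigma_{i_p})$. I would verify this correspondence carefully by checking that $L_u^\ast$ acts on the trace-monoid word exactly as a closing step acts on slot histories, namely by removing the last occurrence of $u$ provided all later letters commute with it.

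\textbf{Main obstacle and variance.} The main obstacle is the error analysis: bounding the crossing or degenerate configurations uniformly. This covers closing steps with partial overlaps, where some slots of the closing set were modified but the elements still coincide because fresh choices collided, and cases where the same $2r$-set arises from different $(U,V)$. Each such coincidence forces an element to be chosen from $O(1)$ options instead of $\Theta(n)$, which loses a factor $n^{-1}$; since $p$ is fixed, the number of combinatorial types is bounded, giving an $O(1/n)$ error. The variance is handled the same way with two walks: the covariance is nonzero only if the walks share a Gaussian, which merges their element sets and loses at least a factor $\binom n\ell^{-1}$-comparable to $n^{-1}$ relative to the square of the mean.
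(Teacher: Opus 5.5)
Your proposal is correct and follows essentially the same route as the paper. The paper likewise uses the Wick expansion over walks, and shows that walks in which some opening head reuses an already-seen vertex (its positive-defect walks) contribute $O(1/n)$. Your slot tracking is exactly its cost-$0$ Kikuchi transport built from the canonical bijections, which puts defect-$0$ walks with pairing $\pi$ in bijection with $\Hom(G_\pi,K(\ell,r))$ times the choices of fresh heads. The variance is handled by the same shared-vertex count followed by Chebyshev.

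There is one small bookkeeping slip. For fixed $\pi$, the leading count is $\binom n\ell\binom{n-\ell}{r}^{p/2}(1+o(1))$ times the number of admissible labellings, not $\binom n\ell d_n^{p/2}$ times it. This is in fact what your later normalisation by $\binom\ell r^{-p/2}$ uses.
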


This determines the asymptotic spectral 
distribution of Kikuchi matrices. 
Its moments have
a simple explicit form; see Lemma~\ref{lem:momentsGamma}.
As a special case, when $k=1$ and $\ell<2r$, the Kneser
graph $K(\ell,r)$ has no edges, so the limit
is semicircular, recovering the result of~\cite{BCSvH24}.

The basic reason to expect the Kneser graph to be involved is as follows.
In the combinatorics of tracial moments of a Kikuchi matrix, we sum over closed walks of a certain length on the \emph{Kikuchi graph} given by the pattern of non-zero entries.
Such walks can be viewed as moving around on $\binom{[n]}{\ell}$ by replacing $r$ indices of a subset at a time.
The leading-order contributions to the tracial moments come from walks in which each Gaussian variable (i.e., each total set of $2r$ indices affected by a step) occurs exactly twice, and such that the first step of each such pair introduces $r$ new indices that have not yet been encountered on the walk.
Further, these replacements can be viewed as occurring in $\ell$ labeled ``slots'' of an element of $\binom{[n]}{\ell}$, whose labels persist across replacements of indices.
In total, we then have a sequence of replacements of $r$ indices, ordered in a cycle, each replacement occurring in some $r$ slots out of $\ell$, where each replacement is matched to another that occurs on the same set of slots.
The main point is that two matches can only ``cross'' along the cycle if they occur on disjoint sets of slots.
This is precisely the adjacency relation in the Kneser graph, whereby the associated $\Gamma$-independent system has moments matching the leading-order contributions to tracial moments. See
also Section~\ref{sec:relatedWork} for
another interpretation of the limit in
terms of intrinsic freeness.

\paragraph*{}
Given Theorem~\ref{thm:weakIntro}, a
natural follow-up question is:

\begin{center}
    \textit{Under which 
    assumptions on $\ell,r$ can weak
    convergence be  upgraded to strong
    convergence? \\In particular, when does $\|Y_n\|\to \|\Sigma\|$?}
\end{center}
We are unable to answer this question in full, but we prove several partial results.
Note that weak convergence
implies $\|Y_n\|\ge \|\Sigma\|-o(1)$
with high probability, so the question is whether $Y_n$ can asymptotically have outlier eigenvalues lying beyond the support of its limiting spectral distribution.

To put our results in context, we first estimate $\|\Sigma\|$ as a function
of $\ell$ and $r$. Counting the contribution of words adding $p\approx \ell/r^2$
pairwise disjoint subsets of $[\ell]$ of size $r$ and removing them in an
arbitrary order gives the lower bound
\begin{equation}
    \|\Sigma\|\ge \tau(\Sigma^{2p})^{\frac 1 {2p}}\ge \left(p! \prod_{i=0}^{p-1} \frac {\binom {\ell-ir} r} {\binom \ell r} \right)^{\frac 1 {2p}} \gtrsim \frac {\sqrt{\ell}} r\,.\label{eq:lowerBoundNorm}
\end{equation}
We later show (Proposition~\ref{prop:upperBoundKneser}) that in fact, $\|\Sigma\|\asymp \max\{1,\sqrt \ell/r\}$.
This is compatible with the recent result of Kothari and Xu~\cite{kothari2025smoothtradeofftensorpca}, which confirms that $\|Y_n\|\asymp_r \sqrt \ell$ with high probability in the regime where
$r$ is constant. 
Our second main theorem is a refinement of~\cite{kothari2025smoothtradeofftensorpca} with
an explicit dependence on $r$ and an
improved allowed range of $\ell=\ell(n)$.

\begin{theorem}\label{thm:kothariXuImprovement}
    If $\ell\lesssim n/(r^2\log^2 n)$, then 
    $\|Y_n\|\lesssim \sqrt{\ell/r}$ with high probability.
\end{theorem}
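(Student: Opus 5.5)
We prove Theorem~\ref{thm:kothariXuImprovement} with the moment method, running the trace method directly on $Y_n$ for a growing moment $2p \asymp \log \binom n\ell \asymp \ell\log n$. The combinatorial structure should mirror the moment formula for the $\Gamma$-independent limit, with error terms controlled explicitly in $r$.

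\textbf{Step 1: reduction to a walk count.} Write $\E\tr(Y_n^{2p}) = d_n^{-p}\binom n\ell^{-1}\sum_{W}\E\prod_t g_{I_t}$. The sum runs over closed walks $U_0,U_1,\ldots,U_{2p}=U_0$ in the Kikuchi graph, with $I_t = U_{t-1}\mathbin\triangle U_t$. Since $\|Y_n\|^{2p}\le \binom n\ell \tr(Y_n^{2p})$, Markov's inequality with $2p\ge C\log\binom n\ell$ reduces the claim to showing $\E\tr(Y_n^{2p})^{1/2p}\lesssim \sqrt{\ell/r}$. The Gaussian moments are handled by the standard reduction to pair partitions: $\E\prod g_{I_t}\le \sum_{\pi}\mathbf 1[\text{$\pi$ respects labels}]$ over pair partitions $\pi$ of $[2p]$. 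So it suffices to bound, for each pairing $\pi$, the normalized number of walks consistent with $\pi$, and then sum over $\pi$.

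\textbf{Step 2: encoding walks.} Read the walk step by step and describe each step by two choices. The first is which $r$ of the current $\ell$ elements are removed, at most $\binom \ell r$ choices. The second is which $r$ elements are added. If the step opens a new pair, there are at most $\binom{n-\ell}{r}$ choices. If the step closes a pair, the added set is forced by the earlier $I_s$, since $U_t = U_{t-1}\mathbin\triangle I_s$; the step only needs $I_s\cap U_{t-1}$ to have size $r$. Opening steps therefore contribute $d_n$ each and cancel the normalization $d_n^{-p}$. Each closing step contributes a factor $1/d_n$ times at most one, and we must exploit that.

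The key accounting concerns a closing step for a pair $\{s,t\}$ with $s<t$. It is valid only if $I_s\cap U_{t-1}$ has exactly $r$ elements. Among the $r$ elements added at time $s$, this means: the elements removed since then were re-added, and the elements of $U_s\setminus U_{s-1}$ still present are removed now. Count by charging. Over all $\pi$ with the walk count fixed, the number of ways to choose which still-open pair closes at time $t$ is bounded by the number of open pairs whose "new" half is still entirely present in $U_{t-1}$. Such half-sets must be essentially disjoint, since they are $r$-subsets of $U_{t-1}$ not yet removed. Hence there are at most $\ell/r$ of them, up to lower-order terms from overlapping or re-added elements.

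Summing over $\pi$ step by step then gives $\E\tr(Y_n^{2p})\lesssim (C\ell/r)^{p}$ times error factors. We get $\sqrt{\ell/r}$ rather than the sharp $\sqrt\ell/r$ because the overlapping or degenerate configurations are bounded crudely.

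\textbf{Step 3: degenerate walks (main obstacle).} The hard part is controlling walks where a new element coincides with a previously seen one, or where labels repeat more than twice. Each such coincidence saves a factor of $n$ in the count of new elements, but costs a combinatorial factor polynomial in $p$, $\ell$, and $r$, roughly $O(p\, r\,\ell)$ for locating the coincidence and choosing the repeated element among at most $2pr$ seen ones. Each coincidence therefore contributes a factor of about $p r^2\ell/n \asymp r^2\ell^2\log n/n$. My first attempt would be a refined encoding that charges each coincidence only $O(r\cdot pr)/(n-\ell)$, with the $\ell$ from choosing removals already absorbed. This gives a per-defect factor of $pr^2/n\asymp r^2\ell\log n/n$. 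The factor is small when $\ell\lesssim n/(r^2\log^2 n)$, where the extra logarithm absorbs the $e^{O(p)}$ growth in the number of defect positions. Summing the geometric series over numbers of defects then gives $\E\tr(Y_n^{2p})\le (C\ell/r)^p$. This matches the hypothesis of the theorem, which I take as confirmation that this is the intended bookkeeping.
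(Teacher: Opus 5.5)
Your strategy is the paper's. You encode paired walks from left to right. Each opening costs $\binom{\ell}{r}\binom{n-\ell}{r}=d_n$, times about $2pr^2/(n-\ell)$ per coincidence in its head. Each closing costs only the choice of which open pair it closes, bounded by $\ell/r$ via disjointness of the candidate tails $I_s\cap U_{t-1}$.

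There is a genuine gap in how you handle closings once coincidences are present. You assert the $\ell/r$ bound ``up to lower-order terms'', and in Step~3 you charge each defect only on the opening side. But a vertex that is removed and later re-added by an opening head can lie in the candidate tails of several open pairs at once. So at some closings the number of admissible choices genuinely exceeds $\ell/r$. Falling back on the trivial bound $p$ on the number of open pairs at every closing loses $(pr/\ell)^p$ and returns you to the $\sqrt{\ell\log n}$ bound.

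What is missing, and what the paper proves, is the following.
\begin{enumerate}
\item[(i)] Call a closing \emph{bad} if its tail meets the support of another currently open pair. Let $\delta(u)$ be the number of coincidences at $u$. Each vertex $u$ witnesses the badness of at most $\delta(u)$ closings, so at most $\Delta$ closings are bad (Lemma~\ref{lem:boundBadVertices}). At a good closing, the closed pair is among at most $\ell/r$ open pairs whose candidate tails are pairwise disjoint.
\item[(ii)] At any closing, each $u\in U_{t-1}$ lies in at most $2\delta(u)+\one\{u\notin U_0\}$ of the open supports. Hence there are at most $(\ell+2\Delta)/r$ admissible choices (Lemma~\ref{lem:boundDefault}).
\end{enumerate}
After fixing the positions of the bad closings in advance (a $2^{O(p)}$ factor), the closings contribute at most $(\ell/r)^p(1+2p/\ell)^{\Delta}$.

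This closing-side cost is exactly where the hypothesis comes from, and your explanation of the second logarithm does not hold. The positions and multiplicities of the coincidences cost only $2^{O(p+\Delta)}$, which is a constant per step and per defect and is absorbed into $C$. The true per-defect factor is about $\frac{pr^2}{n}\bigl(1+\frac{2p}{\ell}\bigr)$. With $p\asymp \ell\log n$ forced by Markov's inequality, the second factor is $\asymp\log n$. The series over $\Delta$ then converges only if $p\lesssim \sqrt{\ell n}/r$, which is precisely $\ell\lesssim n/(r^2\log^2 n)$. Your bookkeeping, with per-defect factor $pr^2/n$, would instead give the theorem under the weaker hypothesis $\ell\lesssim n/(r^2\log n)$; that discrepancy is a symptom of the dropped cost.

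Separately, the loss from $\sqrt{\ell}/r$ to $\sqrt{\ell/r}$ is not caused by degenerate walks. It already occurs for defect-$0$ walks, from bounding every closing by the maximal number $\ell/r$ of disjoint $r$-subsets of $U_{t-1}$.
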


Together with the lower bound~\eqref{eq:lowerBoundNorm} and weak convergence,
this determines the limit of $\|Y_n\|$
for fixed $\ell,r$
up to a factor $O(\sqrt r)$.

Next, we show that the norm convergence
question can be settled entirely
for a slight
variant of Kikuchi matrices. When $\ell=r=1$, we note that $Y_n$ is (up to its diagonal) just a normalized real symmetric matrix drawn from the \emph{Gaussian orthogonal ensemble} (GOE); we define the {\em directed Kikuchi matrix} to be the corresponding generalization of the complex Hermitian \emph{Gaussian unitary ensemble} (GUE).
Formally, given integers $n,\ell,r$ satisfying
$r\le \ell\le n-r$, the 
directed Kikuchi matrix $X_n=X_n(\ell,r)$ 
is the random matrix  with complex entries
\[
    X_n[U,V] \defeq \begin{cases}  \dfrac 1 {\sqrt{d_n}} \;g_{V\setminus U, U\setminus V} & \text{if $|U\mathbin \triangle V|=2r$}\\
    0 & \text{otherwise}
    \end{cases}\qquad \text{for }U,V\in \binom {[n]} \ell\,.
\]
Here, $g_{H,T}$ are standard complex Gaussians for all disjoint $H,T\subseteq [n]$ of size $|H|=|T|=r$, where $g_{H,T}$ and $g_{H',T'}$
are independent for every $\{H,T\}\neq \{H',T'\}$, and
$g_{H,T} = \overline{g_{T,H}}$ (complex conjugate). The normalization 
$d_n=d_n(\ell,r)$ is defined in~\eqref{eq:degreeKikuchi}.

Directed Kikuchi matrices have the same weak limit as Kikuchi matrices
(see Theorem~\ref{thm:weakConvergence}). Our third main result shows that,
for a single directed Kikuchi matrix, this weak convergence can be
upgraded to convergence of the norm.

\begin{theorem}\label{thm:directedIntro}
    For fixed $1\le r\le \ell$, we have $\left\|X_n\right\|\to \left\|\Sigma\right\|$ in probability as $n\to \infty$. 
\end{theorem}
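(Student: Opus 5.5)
We need to show that $\|X_n\|\to\|\Sigma\|$ in probability. The lower bound $\|X_n\|\ge\|\Sigma\|-o(1)$ with high probability follows from weak convergence of directed Kikuchi matrices (Theorem~\ref{thm:weakConvergence}). So the whole task is the upper bound $\|X_n\|\le\|\Sigma\|+o(1)$.

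The plan is to exploit the complex (GUE-type) structure to give an exact, rather than asymptotic, comparison with the limit. Write $X_n=\sum_{\{H,T\}} (g_{H,T} A_{H,T}+\overline{g_{H,T}}A_{H,T}^\ast)/\sqrt{d_n}$, where $A_{H,T}$ is the partial isometry sending $\delta_V\mapsto\delta_{(V\setminus T)\cup H}$ whenever $T\subseteq V$ and $H\cap V=\varnothing$. The $A_{H,T}$ act like ``creation of $H$ and annihilation of $T$'', and they are the natural finite-$n$ analogues of $L_a$ in Definition~\ref{def:gammaSystem}.

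The key steps are as follows.

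\emph{Step 1 (moment comparison).} Expand $\E\tr |X_n|^{2p}$ via Wick's formula. Because the Gaussians are complex, only pairings of each $g$ with its conjugate survive. This kills the orientation-reversing pairings that, in the real case, create outliers. I would show that every surviving term is bounded by the corresponding term in $\tau(\Sigma^{2p})$, up to a factor $(1+Cp^2/n)^{p}$ or similar. Concretely, a closed walk in the Kikuchi graph, in which the replaced index sets are matched in creation/annihilation pairs, is encoded by a word in the trace monoid of $K(\ell,r)$ via slot labels. Pairings whose crossings are not Kneser-disjoint should contribute $0$ or be of lower order. The directed structure forces each matched pair to undo exactly what it created, which is what makes the encoding essentially injective.

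\emph{Step 2 (norm from moments).} The resulting bound has the form
\[
\E\tr X_n^{2p}\le (1+o(1))\,C^{p}\,\tau(\Sigma^{2p})\cdot \mathrm{poly}(n)^{\text{small}} .
\]
Take $p\asymp \log\binom n\ell\asymp\ell\log n$ with polynomial-in-$p$ correction factors of the form $(1+p^{c}/n)^p\to1$. Then
\[
\E\|X_n\|^{2p}\le \binom n\ell\,\E\tr X_n^{2p}
\]
gives $\|X_n\|\le(1+o(1))\|\Sigma\|$ by Markov's inequality, since $\tau(\Sigma^{2p})^{1/2p}\to\|\Sigma\|$. Because $p$ grows like $\log n$ while $\ell,r$ are fixed, the factor $\binom n\ell^{1/2p}$ is a constant. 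This is too weak, so instead I would choose $p\gg\log n$, for instance $p=\log^2 n$. That requires controlling the error terms for $p$ up to $n^{c}$, which is where the complex structure is needed.

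\emph{Main obstacle.} The hardest part is the combinatorial bound in Step 1 uniformly for $p$ polynomially large. Non-leading walks, in which indices are reused or $g$'s appear more than twice, must be charged to leading walks with a loss of only $(p^{O(1)}/n)$ per defect. In the real case this charging fails, because reversed pairings generate genuine outliers. I would first try a Bordenave-style encoding of walks by their ``new-index'' times, showing that each defect costs a factor $n^{-1}$ and at most $p^{O(\ell r)}$ encoding choices.
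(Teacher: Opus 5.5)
\textbf{Overall.} Your outer frame matches the paper's. The lower bound comes from weak convergence. The upper bound uses the trace method with $\ell\log n\ll p\le n^{c}$ plus Markov, and the complex Gaussians enter through the mirroring $e_i=e_j^\ast$.

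\textbf{The gap is Step~1.} That step carries the whole theorem, and you only state it as a goal. The mechanism you sketch for it does not work as described. Encoding walks by slot labels in the trace monoid of $K(\ell,r)$, and charging positive-defect walks to leading ones, presupposes that a positive-defect walk still induces a homomorphism $G_\pi\to K(\ell,r)$. It need not. For $\ell=r=1$, take the walk $\{1\}\to\{b\}\to\{1\}\to\{b\}\to\{1\}\to\{b\}\to\{1\}$ with $\pi=\{14,25,36\}$. This is a paired directed Kikuchi walk of defect $2$ whose crossing graph is a triangle, whereas $K(1,1)$ has no edges. Such terms are neither zero nor chargeable to defect-$0$ walks with the same pairing.

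\textbf{The comparison must also be sharp.} It may lose at most polynomially in $p$ per defect, and nothing exponential in $p$ overall. The factor $C^{p}$ in your Step~2 would only give $\|X_n\|\le(1+o(1))\sqrt{C}\,\|\Sigma\|$, which is a Theorem~\ref{thm:norm}-type bound rather than convergence. Even a loss of $(1+c/\ell)^{p}$ per defect cannot be compensated by the $p^{O(1)}/n$ gain per defect once $p\gg\ell\log n$. That is roughly what one pays by letting each opening tail range over all $\binom{\ell+\Delta}{r}$ labels of a larger Kneser graph, instead of the $\binom{\ell}{r}$ labels in the current state. Separately, the paper does not claim that the real matrix $Y_n$ has outliers; that question is left open.

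\textbf{What the paper supplies.} The paper fills the gap with two ingredients.
\begin{itemize}
\item Lemma~\ref{lem:existenceTransport} builds, for every defect-$\Delta$ paired directed walk, a Kikuchi transport. This is a sequence of relabelings $\rho_t:U_t\to[\ell+\Delta]$ under which pairing and crossing compatibility hold exactly, so $ij\mapsto\rho_{i-1}(T_i)$ is a homomorphism $G_\pi\to K(\ell+\Delta,r)$. It uses at most $\Delta$ fresh labels and at most $3\Delta$ switches. The switches cost $(2p(\ell+\Delta)^2+1)^{3}$ per defect to encode, and they determine the $\ell$-element windows $D_i=\rho_{i-1}(U_{i-1})$.
\item Lemma~\ref{lem:boundSlot} writes the number of pairs $(\pi,\varphi)$ with $\varphi\in\Hom(G_\pi,K(\ell+\Delta,r))$ and $\varphi(ij)\subseteq D_i\cap D_j$ as $\tau(Q_1\cdots Q_{2p})$. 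Here $Q_i$ is the sum of the semicircular elements indexed by $\binom{D_i}{r}$. The tracial H\"older inequality then bounds this by exactly $\binom{\ell}{r}^{p}\tau(\Sigma^{2p})$, even though the windows move in time.
\end{itemize}
Closing steps are determined by mirroring, and coincidences in opening heads are counted separately. Together this yields $\E\tr X_n^{2p}\le\tau(\Sigma^{2p})\sum_{\Delta}\varepsilon_n^{\Delta}$ with $\varepsilon_n\lesssim p^{5}\max\{\ell,rp\}^{6}r^{2}/(n-\ell)$, so no $C^{p}$ appears. A Bordenave-style encoding by new-index times corresponds only to that final head-counting step. On its own it does not produce the sharp comparison with $\tau(\Sigma^{2p})$.
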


The proof of Theorem~\ref{thm:directedIntro} generalizes to $\ell$ growing with $n$,
in the sense that $\|X_n\|\le (1+o(1)) \|\Sigma\|$ continues holding for constant $r$ and $\ell(n)\le n^{\delta}$
for some small constant $\delta>0$.

\paragraph{The $q$-Gaussian limit.} After taking the limit $n\to\infty$,
a further limit $\ell\to\infty$ with $r=r(\ell)$ connects Kikuchi matrices with
another central object in noncommutative
probability: the {\em $q$-Gaussian system}~\cite{frischBourret,brownianBozejko,qGaussianSupport,speicherCLT,qGaussianReloaded}. To
define this object, observe that
the creation operators in Definition~\ref{def:gammaSystem} satisfy
\begin{equation}
    L_u^\ast L_v = \Gamma_{uv} L_v L_u^\ast + \one\{u=v\}\cdot I\,,\label{eq:toeplitz}
\end{equation}
where $ I$ is the identity operator and $\Gamma$ is the adjacency matrix
of the graph $\Gamma$. 
The relations~\eqref{eq:toeplitz}
for arbitrary fixed matrices $\Gamma$
(not necessarily $\{0,1\}$-valued) are known
as the {\em $\Gamma$-Toeplitz relations}. The $q$-Gaussian system with
$k$ variables is obtained analogously by constructing creation operators satisfying the Toeplitz 
relations with $\Gamma_{uv} \defeq q$ for all
$u,v\in [k]$, for
some constant $q\in [0,1)$. See Definition~\ref{def:qGaussianSystem} for a formal definition.

Our fourth main result shows that tuples of
independent Kikuchi matrices converge to a
$q$-Gaussian system in an appropriate double limit.

\begin{theorem}\label{thm:qGaussianConvergence}
    Fix $k\ge 1$ and $\alpha>0$. Let $\bd X_{\!n}$ and $\bd Y_{\!n}$ be $k$-tuples of independent directed Kikuchi and Kikuchi matrices, respectively.
    Then in the double limit $n\to\infty$,
    followed by $\ell\to\infty$ with $r=r(\ell)$ such that $r^2/\ell\to \alpha$,
    both $\bd X_{\!n}$ and $\bd Y_{\!n}$ converge weakly to the $e^{-\alpha}$-Gaussian system with $k$ variables.
    Furthermore, for $k=1$,
    $X_{\!n}$ converges strongly to the same
    limit.
\end{theorem}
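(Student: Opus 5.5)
The plan is to split the double limit into its two stages. By Theorem~\ref{thm:weakIntro}, and its directed analogue Theorem~\ref{thm:weakConvergence}, the limit $n\to\infty$ of both $\bd X_{\!n}$ and $\bd Y_{\!n}$ is the tuple $\bd\Sigma(\ell,r)$ built from a $K^{(k)}(\ell,r)$-independent semicircular system. It therefore suffices to show two things. First, $\bd\Sigma(\ell,r)$ converges weakly to the $q$-Gaussian system with $q=e^{-\alpha}$ as $\ell\to\infty$ with $r^2/\ell\to\alpha$. Second, for $k=1$, $\|\Sigma(\ell,r)\|\to\|s_q\|=2/\sqrt{1-q}$; combined with Theorem~\ref{thm:directedIntro} (since $\|X_n\|\to\|\Sigma\|$ for fixed $\ell,r$), this gives the strong convergence statement in the iterated limit.

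\textbf{Weak convergence.} I would expand a moment $\tau(\Sigma_{i_1}\cdots\Sigma_{i_{m}})$ using the moment formula of Lemma~\ref{lem:momentsGamma}. For a $\Gamma$-independent semicircular system it should read as a sum over pair partitions $\pi$ of $[m]$ compatible with the colours $i_j$, with an extra sum over labels $a_B\in\binom{[\ell]}{r}$ attached to the blocks. A term contributes $1$ exactly when every pair of crossing blocks carries adjacent labels, i.e. disjoint sets. With the normalization $\binom{\ell}{r}^{-m/2}$, each moment becomes
\[
\sum_{\pi}\Pr\bigl[\text{$a_B\cap a_{B'}=\varnothing$ for all crossing pairs $\{B,B'\}$ of $\pi$}\bigr],
\]
where the labels $a_B$ are i.i.d.\ uniform $r$-subsets of $[\ell]$. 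The target is the $q$-Gaussian moment $\sum_\pi q^{\mathrm{cr}(\pi)}$, where $\mathrm{cr}(\pi)$ is the number of crossings. Since $\pi$ has at most $m/2$ blocks, it is enough to check that for a fixed number of blocks the disjointness events are asymptotically independent, with each having probability $\binom{\ell-r}{r}/\binom{\ell}{r}\to e^{-\alpha}$. I expect this to follow because $O(1)$ random $r$-sets in $[\ell]$ with $r\sim\sqrt{\alpha\ell}$ behave like independent sparse sets: the pairwise intersection counts converge jointly to independent $\mathrm{Poisson}(\alpha)$ variables, since triple intersections have size $O(r^3/\ell^2)\to0$. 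A Chen--Stein argument or a direct inclusion--exclusion computation would give the joint limit of the indicators.

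\textbf{Norm convergence for $k=1$.} Weak convergence already yields $\liminf_\ell\|\Sigma(\ell,r)\|\ge 2/\sqrt{1-q}$. For the upper bound, I would bound $\tau(\Sigma^{2p})$ for $p$ growing with $\ell$, say $p\approx \ell^{c}$, and then use $\|\Sigma\|=\lim_p\tau(\Sigma^{2p})^{1/2p}$. This relies on $\tau$ being faithful on the relevant algebra, or else on bounding $\|\Sigma\|$ through moments in the vacuum vector together with a standard argument for $\Gamma$-systems. The required estimate is a uniform bound showing that the probability that all crossing pairs are disjoint is at most $(C_\ell)^{\mathrm{cr}(\pi)}$ times a subexponential factor, with $C_\ell\to q$. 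The idea is to reveal the blocks in order: conditional on earlier labels, each new label must avoid the union of the labels it crosses. If it crosses $t$ blocks, that union has size between roughly $r$ and $tr$, and the probability of avoiding it is about $(1-r/\ell)^{r t'}$, where $t'$ is the number of distinct elements in the union. The difficulty is that overlaps among earlier labels shrink the union, so the bound does not factor as $q^t$.

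\textbf{Main obstacle.} The uniform-in-$p$ bound in the norm convergence step is the main obstacle. To handle it, I would first try bounding by a $\Gamma$-Toeplitz system with a slightly larger $q'=q+\varepsilon$, using the Bożejko--Speicher norm bound $\|\sum L+L^*\|\le 2/\sqrt{1-q'}$ for $q'$-Toeplitz-type creation operators. Concretely, I would represent $\ell^2(\calT(K(\ell,r)))$ after symmetrization over the labels $a$ as a deformed Fock space. I would then show that the effective commutation kernel restricted to the symmetric sector has norm at most $q+o(1)$, using concentration of $|a\cap b|$.
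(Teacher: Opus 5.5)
\textbf{Weak convergence.} This half of your proposal is sound, and it takes a different route from the paper. You compute the homomorphism densities ${\binom{\ell}{r}}^{-|V(G_\pi)|}|\Hom(G_\pi,K(\ell,r))|$ directly, as probabilities for i.i.d.\ uniform $r$-sets, using joint Poisson$(\alpha)$ limits of the pairwise intersections. The paper reaches the same limit $e^{-\alpha|E(G_\pi)|}$ through Chung--Graham--Wilson quasirandomness, checked via the Kneser spectrum (Lemmas~\ref{lem:eigenvaluesKneser} and~\ref{lem:quasiRandomEquivalence}). Your version is more elementary; the paper's reuses spectral data it needs later anyway.

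\textbf{The gap.} It lies in the strong-convergence half. The upper bound $\limsup_{\ell}\|\Sigma(K(\ell,r))\|\le 2/\sqrt{1-e^{-\alpha}}$ is left as your ``main obstacle'', and neither sketched route delivers it.

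Route (a) fails for two reasons.
\begin{enumerate}
\item $\Sigma(\ell,r)$ acts on an infinite-dimensional space, so there is no dimension factor to absorb. The identity $\|\Sigma\|=\lim_{p\to\infty}\tau(\Sigma^{2p})^{1/2p}$ is a limit at \emph{fixed} $\ell$. Moment bounds for $p\approx\ell^{c}$ cannot rule out, say, spectral mass $e^{-\ell}$ beyond $2/\sqrt{1-q}+\varepsilon$.
\item At fixed $\ell$ and $p\to\infty$, your per-pairing estimate is false. Take the pairing of $[4m]$ with arcs $(i,3m+1-i)$ and $(m+i,4m+1-i)$ for $i\in[m]$. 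It has $G_\pi=K_{m,m}$ and $m^2$ crossings. Put all labels of one side in $[\lfloor \ell/2\rfloor]$ and all labels of the other side in its complement. This shows the probability is at least $\bigl(\binom{\lfloor \ell/2\rfloor}{r}/\binom{\ell}{r}\bigr)^{2m}\ge 3^{-2rm}$ for large $\ell$. That exceeds $C_\ell^{m^2}e^{o(m)}$ once $m\gg r$, since $C_\ell\to q<1$.
\end{enumerate}

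Route (b) is only a heuristic. The Kneser system satisfies the Toeplitz relations with entries $\Gamma_{ab}\in\{0,1\}$, so there is no entrywise comparison with a $q'$-Toeplitz system, and the Bo\.zejko--Speicher bound does not apply. Any spectral comparison must also use more than pairwise statistics such as concentration of $|a\cap b|$. For example, a quasirandom graph of density $q$ on $N$ vertices can contain a clique of size $N^{0.9}$. Taking the conditional expectation onto those commuting clique variables then gives $\|\Sigma(\Gamma)\|\ge 2N^{0.4}\to\infty$.

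\textbf{Missing ingredient.} It is precisely Proposition~\ref{prop:pseudoRandomUpperBound}, which the paper imports from the companion work:
\[
\|\Sigma(\Gamma)\|\le\frac{2}{\sqrt{1-q}}\sqrt{1+\omega(\Gamma)\max\{0,\lambda_1(\Gamma-qJ)\}/|V(\Gamma)|}\,.
\]
The paper applies it with $q$ the edge density, $\omega(K(\ell,r))=\lfloor\ell/r\rfloor$ and $\lambda_2=\binom{\ell-r-2}{r-2}$. The correction term is then at most $(r-1)/(\ell-1)\to0$ (Proposition~\ref{prop:pseudoRandom}).

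\textbf{Smaller omission.} Passing from $\|X_n\|$ to $\|P(X_n)\|$ for every polynomial $P$ needs one more fact. With $\rho=2/\sqrt{1-e^{-\alpha}}$, you need that $\mu_{e^{-\alpha}}$ has full support on $[-\rho,\rho]$ (Lemma~\ref{lem:supportQGaussian}). Then $\|P(X_n)\|\le\max_{|x|\le\|X_n\|}|P(x)|$ tends to $\|P(S)\|=\max_{|x|\le\rho}|P(x)|$. You skip this step.
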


In light of the multivariate central limit theorem for $\Gamma$-independent random variables
of~\cite{graphonCLT1}, the weak convergence statement in
Theorem~\ref{thm:qGaussianConvergence} is equivalent to showing that
the blowups of the Kneser graphs converge to the constant graphon. We conjecture that the strong convergence 
statement for the directed Kikuchi matrix
extends to any $k\ge 1$ and $\alpha>0$
(for $\alpha=0$, the linear operators
of the corresponding ``$1$-Gaussian system'' are unbounded, so strong convergence does not make sense).

\subsection{Connections and broader context}\label{sec:relatedWork}

\paragraph{Intrinsic $\Gamma$-independence.}
The Kikuchi matrix is a Gaussian matrix series: in the notation of~\eqref{eq:KikuchiYSTconstruction},
\[
    Y_n = \sum_{I\in \binom {[n]} {2r}} g_I M_I\,,\qquad \text{where } M_I[U,V] \defeq \begin{cases}\dfrac 1 {\sqrt {d_n}} & \text{if $U\mathbin \triangle V = I$},\\
    0 & \text{otherwise.}\end{cases}
\]
Given this representation, the intrinsic freeness approach of~\cite{BBvH23} suggests comparing
the spectrum of $Y_n$ to that of the operator obtained by replacing the independent Gaussians with freely independent semicircular elements:
\begin{equation}
    Y_{\text{free}} \defeq \sum_{I\in \binom {[n]} {2r}} S_I \otimes M_I\,,\qquad \text{$\{S_I\}$ is a freely independent semicircular system.}\label{eq:free}
\end{equation}
However, as discussed above, the spectrum of $Y_{\text{free}}$ does not
asymptotically match that of $Y_n$ once $\ell\ge 2r$. Instead, our results suggest that the appropriate operator to compare
$Y_n$ to is
\begin{equation}
    Y_{\text{$\Gamma$}} \defeq \sum_{I\in \binom {[n]} {2r}} S_I \otimes M_I\,,\qquad \text{$\{S_I\}$ is a $K(n,2r)$-independent semicircular system.}\label{eq:gammaFree}
\end{equation}
Indeed, for every $\ell,n,r$, the operator $Y_{\Gamma}$ has
{exactly} the same moments as the normalized sum of the $\Gamma$-independent
semicircular system associated with
the graph product
$\Gamma = K(\ell,r)\times K(n-\ell,r)$.\footnote{That is, the graph with vertex
set $\binom {[\ell]} r\times \binom {[n-\ell]} r$, with an edge between $(a,a')$ and $(b,b')$ if and only if $a\cap b=a'\cap b'=\varnothing$.}
As $n\to\infty$ for fixed
$\ell,r$, the second factor $K(n-\ell,r)$ 
approaches a complete graph, and therefore $Y_{\Gamma}$ converges weakly to
the normalized sum of the $K(\ell,r)$-independent semicircular system -- precisely the weak limit of $Y_n$ identified in Theorem~\ref{thm:weakIntro}.

Thus, the example of Kikuchi matrices motivates
the development of a 
theory of ``intrinsic $\Gamma$-independence''.
As a first step in this direction,
the parallel work~\cite{BKNPW2026-Paper2} derives general
upper bounds on the norms of operators of the form~\eqref{eq:gammaFree}, in the spirit of what Lehner's formula achieves for~\eqref{eq:free} (see Proposition~\ref{prop:pseudoRandomUpperBound}).

\paragraph{Hypergraph Moore bound.}
The dense Kikuchi matrix~\eqref{eq:KikuchiYSTconstruction} has a natural sparse generalization: Given any $2r$-uniform
hypergraph $\calH$ on $n$ vertices,
define $Y_\calH$ by keeping only the entries $(U,V)$ of $Y$
such that $U\mathbin\triangle V$ is
a hyperedge of $\calH$ (and setting all
other entries to $0$).
Bounding the norm of these random
sparse Kikuchi matrices $Y_\calH$ was an instrumental step in the resolution, up to logarithmic factors, of Feige's conjecture on the optimal girth-density tradeoff of hypergraphs, also known as the hypergraph Moore bound~\cite{GuruswamiKothariManohar2022,HsiehKothariMohanty2022simpleMoore}.
One of the sources of the spurious logarithmic factor is the application of bounds such as~\eqref{eq:usingNCK}. 

Our initial motivation to study limits of Kikuchi matrices was to remove these remaining logarithmic factors to establish the hypergraph Moore bound. Indeed, we were able to show that a natural candidate limiting object does have the spectral properties that would be needed to establish the conjecture. This appears as an application of a method we developed to understand objects arising in non-commutative probability~\cite{BKNPW2026-Paper2}. However, we were not able to show the level of convergence needed to establish the hypergraph Moore bound via this route. 

While this work was in progress, the hypergraph Moore bound was established in two independent works~\cite{BKNPW2026-Moore,schmidhuberHastings2026}, using different techniques.

\paragraph{Matrix models for noncommutative independence.} 
We show that $\Gamma$-independent and $q$-Gaussian systems appear naturally as limits of
Kikuchi matrices. 
Historically, significant attention has 
been devoted to the converse direction:
starting from a prescribed set of noncommutative relations, can one
construct ``natural'' sequences of random matrices that realize them
asymptotically, in the sense of weak or strong convergence? 

The question of constructing such a random matrix
model for the $q$-Gaussian system has its origins in mathematical physics, going back to
Frisch and Bourret in 1970~\cite{frischBourret}, and was solved
in the weak sense in~\cite{speicherCLT,sniadyModel}.
For $\Gamma$-independence, weak matrix models were
constructed in~\cite{charlesworthCollins}
and strong matrix models in~\cite{mageeThomas,CGVvH24}. 
One example is the 
{\em tensor GUE} model.
For every fixed $k$, the results of~\cite{freeMonotoneTransport}, 
combined with strong convergence of the GUE~\cite{HT05}, 
yield a strong matrix model
for $q$-Gaussian systems with $k$ variables when $q$ is smaller than a positive constant depending on $k$.
Very recently, strong matrix models were constructed in the range $q<\sqrt 2-1$, for any $k$~\cite{strongQGaussian}; the case of larger $q$ remains open.\footnote{The construction in~\cite{strongQGaussian} has some similarities with ours.
The authors consider
random matrices converging strongly to a $\Gamma_\ell$-independent semicircular system for each $\ell$, where $\Gamma_\ell$ is a sequence of \Erdos-\Renyi random graphs. A further limit $\ell\to\infty$ combined with a diagonal argument yields their $q$-Gaussian matrix model.} 

In contrast, we obtain weak (and conjecturally strong) models for these systems from the opposite direction: they arise naturally in the analysis of efficient algorithms for tensor PCA.

\subsection{Organization of the paper}

We gather preliminaries for the technical sections in Section~\ref{sec:prelims}.
Each subsequent section focuses
on one of our main theorems and can be read largely
independently:
Section~\ref{sec:weak} proves
Theorem~\ref{thm:weakIntro}; Section~\ref{sec:norm} proves Theorem~\ref{thm:kothariXuImprovement}; Section~\ref{sec:strongConvergence} proves Theorem~\ref{thm:directedIntro}; and 
Section~\ref{sec:qGaussian} proves Theorem~\ref{thm:qGaussianConvergence}.

\subsection{Acknowledgments and AI disclosure}

LP acknowledges support from an ETH Z\"urich Postdoctoral Fellowship (Project No.\ 26-1 FEL-092).
Part of this work was done while RW was an academic guest in the Department of Mathematics at ETH Z\"urich. RW's work on this project was supported by the Queen Elizabeth II Graduate Scholarship in Science and Technology (QEII-GSST).
PNN and RW thank the Simons Institute for the Theory of Computing and the organizers of the Fall 2026 program on \emph{Spectral Theory Beyond Graphs}.

Most of this project was developed before AI tools could help with mathematics as they can now. Nevertheless, more recently, we have used AI tools to help perform some computations, carry through some specific arguments, and help with references. The authors wrote the paper and take responsibility for all its (potential) errors.
\section{Preliminaries}
\label{sec:prelims}

All graphs in this paper are simple and undirected. For a graph
$\Gamma$, let $V(\Gamma)$, $E(\Gamma)$ denote its vertex and edge
sets, respectively; we also write $\Gamma$ for its adjacency matrix. We define
\[
    \Hom(G, \Gamma) \defeq \{\varphi : V(G)\to V(\Gamma) : \{\varphi(u),\varphi(v)\}\in E(\Gamma)\;\;\forall \{u,v\}\in E(G)\}
\]
to be the set of graph homomorphisms from a graph $G$ to $\Gamma$.
Let $\calP(p)$ be the set of {\em pairings} (perfect matchings) on 
$[p]$. We denote the edges of a pairing
$\pi$
by $ij\in E(\pi)$ when $1\le i<j\le p$. For each
$ij\in E(\pi)$, we
call $i$ an {\em opening index} and $j$ a {\em closing index}. 
Finally, let
\[
    \calP(\bd w)\defeq \{\pi \in \calP(p) : w_i = w_j \;\;\forall\, ij \in E(\pi)\}\qquad \text{ for any } \bd w = (w_1, \ldots, w_p)\,.
\]
Pairings appear naturally
when computing the moments of multivariate Gaussians:

\begin{lemma}[Wick's formula]\label{lem:wick}
    If $g\in \C^p$ is a centered Gaussian vector, then
    \[
        \E \left[\prod_{i=1}^p g_i\right] = \sum_{\pi \in \calP(p)} \prod_{ij\in E(\pi)} \E\left[g_i g_j\right]\,.
    \]
\end{lemma}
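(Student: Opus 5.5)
We prove Wick's formula by generating functions. We reduce to the real case by multilinearity, treating the complex case through the same identity with bilinear (not sesquilinear) covariances.

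\emph{Setup.} Write $C_{ij} \defeq \E[g_ig_j]$. Each complex coordinate $g_i$ is a complex-linear combination of real jointly Gaussian variables, say $g_i = \sum_m a_{im} x_m$ with $x$ a centered real Gaussian vector. Both sides of the claimed identity are multilinear in the vectors $(a_{i\cdot})_{i\in[p]}$: the left side through $\prod_i g_i$, and the right side because each pairing uses every index exactly once. It therefore suffices to prove the identity when each $g_i$ is a coordinate $x_{m_i}$ of a real centered Gaussian vector, with repetitions of coordinates allowed.

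\emph{Real case.} For $t\in\R^p$, the real Gaussian vector $(g_1,\dots,g_p)$ satisfies
\[
    \E\bigl[e^{\sum_i t_ig_i}\bigr]=\exp\Bigl(\tfrac12\sum_{i,j}t_it_jC_{ij}\Bigr).
\]
Both sides are entire in $t$, and the moments are finite. Apply $\partial_{t_1}\cdots\partial_{t_p}$ at $t=0$. On the left this yields $\E[\prod_i g_i]$ by differentiation under the expectation, which is justified by the exponential moments of Gaussians. On the right, expand $\exp(Q)=\sum_k Q^k/k!$ with $Q=\frac12 t^\top C t$. The coefficient of the squarefree monomial $t_1\cdots t_p$ appears only when $p=2k$. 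Choosing, in $Q^k$, an ordered list of $k$ quadratic terms $\tfrac12 t_it_jC_{ij}$ that cover $[p]$ exactly once gives each pairing $\pi$ with multiplicity $k!\,2^k$, coming from orderings and the symmetry $C_{ij}=C_{ji}$. This cancels the factor $\frac{1}{k!\,2^k}$ and leaves $\sum_{\pi}\prod_{ij\in E(\pi)}C_{ij}$. For odd $p$, both sides vanish.

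\emph{Main obstacle.} The only delicate point is the bookkeeping in the coefficient extraction, namely verifying that the multiplicity is exactly $k!\,2^k$, together with the reduction from complex to real. The reduction is clean because $\E[g_ig_j]$ is bilinear and involves no conjugation, matching the paper's convention for $g_{H,T}$ versus $\overline{g_{T,H}}$.
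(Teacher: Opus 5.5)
Your proof is correct. The paper does not prove this lemma; it quotes Wick's formula in the preliminaries as a standard fact. There is therefore no competing argument to compare against. What you give is the standard moment-generating-function proof, and the bookkeeping is right. For $p=2k$, each pairing $\pi$ of $[p]$ arises from exactly $k!\,2^k$ ordered lists of oriented pairs in the expansion of $Q^k$, which cancels the prefactor $1/(k!\,2^k)$. For odd $p$ both sides vanish, since $\exp(Q)$ has only even-degree terms and $\calP(p)=\varnothing$.

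The point you flagged as delicate is also the one that matters for the paper. You only use that each $g_i$ is a complex-linear combination of coordinates of a real Gaussian vector, together with complex multilinearity of both sides. So your argument covers non-circular complex Gaussian vectors. This generality is needed: in Lemma~\ref{lem:traceMoments} and in~\eqref{eq:defWeight} the vector contains both $g_e$ and $g_{e^\ast}=\overline{g_e}$, so $\E[g_e g_{e^\ast}]=\E|g_e|^2=1$. A proof written only for circularly symmetric complex Gaussian vectors would not apply there.

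If you want to avoid the coefficient extraction, Gaussian integration by parts on the underlying real vector gives the recursion $\E[\prod_{i=1}^p g_i]=\sum_{j=2}^p \E[g_1 g_j]\,\E[\prod_{i\neq 1,j} g_i]$ directly. This mirrors the recursive description of $\calP(p)$ (pair $1$ with some $j$, then pair the rest) and proves the formula by induction on $p$.
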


\subsection{State and operator norm}\label{sec:operatorNotation}

Let $\calB(\calH)$ be the space
of bounded linear maps $\calH\to \calH$
on a Hilbert space $\calH$. Let $I$ be the identity operator. We write $A^\ast$ for the
adjoint of the operator $A$, and $\|A\|$
for the operator norm of $A$.

The moments of an operator are
specified by a {\em state} $\tau:\calB(\calH)\to \C$,
which is a linear functional such that $\tau(I)=1$ and $\tau(A^\ast A)\ge 0$ for all
$A\in \calB(\calH)$. 
The definition automatically implies
\begin{equation}
    \tau\left((A^\ast A)^p\right)^{\frac 1 {2p}}\le \|A\|\qquad \forall p\ge 1\,.\label{eq:faithfulAutomatic}
\end{equation}

Given self-adjoint operators $A_1, \ldots, A_k\in \calB(\calH)$, let $C^*(A_1, \ldots, A_k)$ be the  closure (in the
operator norm topology)
of the set of all noncommutative polynomials in $A_1, \ldots, A_k$.
We say that $\tau$
is {\em faithful} on $C^*(A_1, \ldots, A_k)$ if
$\tau(A^\ast A)\neq 0$ for all $A\in C^*(A_1, \ldots, A_k)\setminus \{0\}$.
Under that assumption, the norm of an operator can be
recovered from its moments.

\begin{lemma}[{\cite[Proposition 3.17]{NS06}}]\label{lem:faithful}
    Let $A_1, \ldots, A_k$ be self-adjoint
    operators. If $\tau$ is faithful on $C^\ast(A_1, \ldots, A_k)$, then
    \[
        \lim_{p\to\infty} \tau((A^\ast A)^p)^{\frac 1 {2p}} = \|A\|\qquad \forall A\in C^\ast(A_1, \ldots, A_k)\,.
    \]
\end{lemma}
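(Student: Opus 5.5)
The plan is to reduce to a single positive operator and then use its spectral measure. Set $B \defeq A^\ast A$. Since noncommutative polynomials include constants, $C^\ast(A_1,\ldots,A_k)$ is a unital C$^\ast$-algebra; it is closed under products and adjoints, so $B$ lies in it. Moreover $B\ge 0$ and $\|B\|=\|A\|^2$. The upper bound $\limsup_{p} \tau(B^p)^{1/(2p)}\le \|A\|$ is exactly \eqref{eq:faithfulAutomatic}, so only the matching lower bound needs work.

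For the lower bound I use continuous functional calculus. Since $B\ge 0$, its spectrum $\sigma(B)$ is a compact subset of $[0,\|B\|]$, and $\|B\|=\max\sigma(B)$. The unital C$^\ast$-subalgebra $C^\ast(B)$ is isometrically $\ast$-isomorphic to $C(\sigma(B))$ via $f\mapsto f(B)$, and $C^\ast(B)\subseteq C^\ast(A_1,\ldots,A_k)$. The state $\tau$ restricts to a positive linear functional on $C(\sigma(B))$ with $\tau(I)=1$. By the Riesz representation theorem there is a probability measure $\mu$ on $\sigma(B)$ with
\[
    \tau(f(B))=\int f\,d\mu\qquad \forall f\in C(\sigma(B))\,,
\]
and in particular $\tau(B^p)=\int x^p\,d\mu(x)$.

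The key step is that faithfulness forces $\operatorname{supp}\mu=\sigma(B)$. Suppose instead that some relatively open set $O\subseteq\sigma(B)$ had $\mu(O)=0$. By Urysohn's lemma pick a continuous $h\ge 0$ on $\sigma(B)$, supported in $O$ and not identically zero, and put $C\defeq \sqrt h(B)$. Then $C\in C^\ast(A_1,\ldots,A_k)\setminus\{0\}$, because functional calculus is isometric and $\sqrt h\ne 0$. However $\tau(C^\ast C)=\tau(h(B))=\int h\,d\mu=0$, which contradicts faithfulness.

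Applying this with $O=\sigma(B)\cap(\|B\|-\varepsilon,\|B\|]$, which is nonempty because $\|B\|\in\sigma(B)$, gives $\mu(O)=:c_\varepsilon>0$. Hence $\tau(B^p)\ge c_\varepsilon(\|B\|-\varepsilon)^p$, and so
\[
    \liminf_{p\to\infty}\tau(B^p)^{1/p}\ge \|B\|-\varepsilon\,.
\]
Letting $\varepsilon\to 0$ and taking square roots gives $\liminf_p \tau((A^\ast A)^p)^{1/(2p)}\ge\|A\|$. Combined with the upper bound, this proves the claim; in fact $\tau(B^p)^{1/p}$ is nondecreasing in $p$ (Lyapunov's inequality), so the limit exists.

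The main point needing care is the support argument. One must check that $\sqrt h(B)$ really belongs to $C^\ast(A_1,\ldots,A_k)$, which is where unitality (the constants among the polynomials) and closedness under functional calculus are used. One must also check that $\tau$ restricted to $C^\ast(B)$ is a genuine positive functional to which Riesz applies. Everything else is standard.
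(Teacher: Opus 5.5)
Your proof is correct. It is essentially the standard argument behind the cited result \cite[Proposition 3.17]{NS06}, which the paper invokes without reproducing: you represent $\tau$ on $C^\ast(A^\ast A)$ by a probability measure via functional calculus and Riesz, use faithfulness to show this measure has full support on $\sigma(A^\ast A)$, get the lower bound from the positive mass near $\max\sigma(A^\ast A)=\|A\|^2$, and get the upper bound from \eqref{eq:faithfulAutomatic}.
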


Finally, we say that $\tau$ is {\em tracial} on
$C^*(A_1, \ldots, A_k)$ if $\tau(AB) = \tau(BA)$ for
all $A,B\in C^*(A_1, \ldots, A_k)$. 

\begin{lemma}[Hölder's inequality; {see~\cite[Section 1]{pisierXu}}]\label{lem:holder}
    Let $A_1, \ldots, A_k$ be self-adjoint
    operators. If $\tau$ is tracial on $C^\ast(A_1, \ldots, A_k)$ and $k$ is even, then
    \[
        \left|\tau(A_1\cdots A_k\right)|\le \tau(A_1^k)^{\frac 1 {k}} \cdots \tau(A_k^k)^{\frac 1 {k}}\,.
    \]
\end{lemma}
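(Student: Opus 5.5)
The plan is to reduce the statement to the standard noncommutative Hölder inequality in a tracial von Neumann algebra. I would prove a two-factor version by a three-lines interpolation argument and then iterate it.

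\emph{Step 1: passing to a tracial von Neumann algebra.} Let $\calA = C^*(A_1,\ldots,A_k)$ and apply the GNS construction to $(\calA,\tau)$. The null space $N=\{A\in\calA : \tau(A^*A)=0\}$ is a left ideal. Because $\tau$ is tracial, $\tau(A^*A)=\tau(AA^*)$, so $N$ is also closed under adjoints and is a two-sided ideal. Hence the GNS representation $\rho$ yields a $C^*$-algebra $\rho(\calA)$ on which the vector state $\tilde\tau$ is a faithful trace, and $\tau(A_1\cdots A_k)=\tilde\tau(\rho(A_1)\cdots\rho(A_k))$. Taking the weak closure gives a von Neumann algebra $M$ with a faithful normal tracial state, still denoted $\tau$. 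Both sides of the inequality are unchanged. Since $k$ is even, $A_i^k=|A_i|^k$, so the right-hand side equals $\prod_i\|A_i\|_k$, where $\|X\|_p:=\tau(|X|^p)^{1/p}$.

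\emph{Step 2: two-factor Hölder.} The aim is to show $|\tau(XY)|\le\|X\|_p\|Y\|_q$ for $1/p+1/q=1$, and more generally $\|XY\|_s\le\|X\|_p\|Y\|_q$ for $1/s=1/p+1/q$.
\begin{itemize}
\item Use polar decompositions $X=U|X|$ and $Y=V|Y|$ in $M$, and normalize $\|X\|_p=\|Y\|_q=1$.
\item Replace $|X|$ by $|X|+\varepsilon$ so that complex powers are invertible and analytic. Then set $F(z)=\tau\big(U(|X|+\varepsilon)^{pz}V(|Y|+\varepsilon)^{q(1-z)}\big)$ on the strip $0\le\operatorname{Re}z\le1$.
\item On $\operatorname{Re}z=0$ and on $\operatorname{Re}z=1$, the imaginary powers are unitaries. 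Traciality together with the trace Cauchy--Schwarz inequality $|\tau(ab)|\le\|a\|\,\tau(|b|)$ then bounds $|F|$ by $\tau((|Y|+\varepsilon)^q)$ and $\tau((|X|+\varepsilon)^p)$ respectively.
\item The three-lines lemma bounds $|F(1/p)|$, and letting $\varepsilon\to0$ gives the claim.
\end{itemize}
The version with $s<1$ exponents is handled the same way, applying the argument to $\tau(|XY|^s W)$ with a suitable partial isometry $W$ from the polar decomposition of $XY$. Alternatively, I would run the interpolation directly with $k$ factors: take $F(z)=\tau\big(\prod_i U_i(|A_i|+\varepsilon)^{k z_i(z)}\big)$ with affine exponents $z_i(z)$ whose real parts sum to $1$.

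\emph{Step 3: induction.} Apply Step 2 with $p=k$ and $q=k/(k-1)$ to get $|\tau(A_1\cdots A_k)|\le\|A_1\|_k\,\|A_2\cdots A_k\|_{k/(k-1)}$. Then iterate the product inequality $\|XY\|_s\le\|X\|_p\|Y\|_q$ to obtain $\|A_2\cdots A_k\|_{k/(k-1)}\le\prod_{i\ge2}\|A_i\|_k$.

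The main obstacle is the technical bookkeeping in Step 2. The first issue is making sense of complex powers and the $\varepsilon$-regularization limit. The second is that, for exponents below $1$, $\|\cdot\|_s$ is only a quasi-norm, so the iterated inequality must be proved via interpolation rather than a triangle inequality. If this becomes heavy, I would instead prove the $k$-factor statement in one shot by the multi-factor three-lines argument above, which avoids the $s<1$ exponents entirely.
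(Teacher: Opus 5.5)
Your overall route matches the paper's. The paper gives no argument of its own: it cites the noncommutative H\"older inequality for a trace from Pisier--Xu. Your Step~1 correctly supplies the reduction to a tracial von Neumann algebra that this citation leaves implicit. Here traciality makes the GNS null space a two-sided ideal, and $A_i^k=|A_i|^k$ because $k$ is even. Your Step~2 bullets also correctly prove $|\tau(XY)|\le\|X\|_p\|Y\|_q$.

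Step~3, however, does not go through as written, and the obstacle you flag is not the real one. Your iteration only produces the exponents $s=k/(k-1),\,k/(k-2),\ldots,k/2$, all larger than $1$, so quasi-norms never appear. The actual gap is the product inequality $\|XY\|_s\le\|X\|_p\|Y\|_q$ itself, which you state but never derive. Your three-lines argument bounds only the trace $|\tau(XY)|$. Recovering $\|XY\|_s$ from trace bounds by duality requires $|\tau(XYW)|\le\|X\|_p\|Y\|_q\|W\|_{s'}$, which is a three-factor instance of the very inequality you are proving. So unless you cite the general H\"older inequality, as the paper does, the iteration is circular.

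Your fallback repairs this, but it is an induction on the number of factors, not a one-shot argument. A single complex variable gives only two boundary lines, so for $k\ge 3$ one line still carries several non-unitary factors. Prove by induction on $m$ the statement
$|\tau(W_1P_1\cdots W_mP_m)|\le\prod_i\tau(P_i^m)^{1/m}$
for contractions $W_i$ and positive $P_i$. Unlike the class of self-adjoint operators, this class is stable under inserting imaginary powers.
\begin{itemize}
\item For the inductive step, replace each $P_i$ by $P_i+\varepsilon$ and set $F(z)=\tau\bigl(W_1P_1^{mz}\prod_{i\ge2}W_iP_i^{m(1-z)/(m-1)}\bigr)$.
\item On $\operatorname{Re} z=1$, every factor except the first is a contraction, so $|F|\le\tau(P_1^m)$.
\item On $\operatorname{Re} z=0$, the first factor is a contraction. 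Move the trailing unitary to the front by traciality and apply the $(m-1)$-factor case to $P_i^{m/(m-1)}$; this gives $\prod_{i\ge2}\tau(P_i^m)^{1/(m-1)}$.
\item The three-lines lemma at $z=1/m$ yields the claim; then let $\varepsilon\to0$.
\item The base case is $|\tau(WP)|\le\|W\|\,\tau(P)$.
\end{itemize}
Taking $m=k$, $P_i=|A_i|$, and $W_i$ the partial isometry in $A_i=W_i|A_i|$ gives the lemma directly, with no need for the $\|\cdot\|_s$ product inequality.
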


\subsection{\texorpdfstring{$\Gamma$-independent and $q$-Gaussian systems}{Gamma-independent and q-Gaussian systems}} 

The following theorem from~\cite{proofGammaRepresentation} ensures the existence of operators satisfying the $\Gamma$-Toeplitz 
relations.

\begin{theorem}\label{thm:existenceToeplitz}
    Let $\Gamma\in [-1,1]^{k\times k}$ be a symmetric matrix such that $\Gamma_{ii}<1$ for all $i\in [k]$. Then there exists a Hilbert space $\calH$, a distinguished unit vector
    $x\in \calH$ and  operators $L_1, \ldots, L_k\in \calB(\calH)$ with the following properties. Letting $\tau(A) \defeq \iprod{x, A x}$ and $S_i \defeq L_i+L_i^\ast$, we have that
    \begin{enumerate}
        \item $L_1, \ldots, L_k$
        satisfy the $\Gamma$-Toeplitz relations~\eqref{eq:toeplitz};
    
        \item $L_i^\ast x = 0$ for
        each $i\in [k]$;

        \item $\tau$ is faithful and tracial on $C^\ast(S_1, \ldots, S_k)$.
    \end{enumerate}
\end{theorem}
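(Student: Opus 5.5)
The plan is to follow Bo\.zejko and Speicher and build the operators on a deformed Fock space. Take $\calH_0=\C^k$ with orthonormal basis $e_1,\dots,e_k$, and form the algebraic full Fock space $\calF=\bigoplus_{m\ge0}\calH_0^{\otimes m}$ with vacuum $\Omega$. For each $m$, define an operator $P_m$ on $\calH_0^{\otimes m}$ by
\[
P_m(e_{i_1}\otimes\cdots\otimes e_{i_m})=\sum_{\sigma\in S_m}\phi(\sigma)\, e_{i_{\sigma(1)}}\otimes\cdots\otimes e_{i_{\sigma(m)}},
\]
where $\phi(\sigma)$ is the product of the factors $\Gamma_{i_a i_b}$ over the inversions of $\sigma$. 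This is the braided quasi-multiplicative symmetrizer. The main step is to show $P_m\ge0$. The Hilbert space is then the completion of $\calF$ modulo the kernel, with $\langle\xi,\eta\rangle_\Gamma=\langle \xi,P\eta\rangle$, and we set $x=\Omega$. Creation is $L_i\xi=e_i\otimes\xi$, and annihilation $L_i^\ast$ acts on simple tensors by
\[
L_i^\ast(e_{i_1}\otimes\cdots\otimes e_{i_m})=\sum_{j}\delta_{i,i_j}\Bigl(\prod_{s<j}\Gamma_{i\, i_s}\Bigr)\, e_{i_1}\otimes\cdots\widehat{e_{i_j}}\cdots\otimes e_{i_m}.
\]

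\textbf{Positivity and boundedness.} First I verify the factorization $P_{m}=(I\otimes P_{m-1})R_m$, where $R_m=\sum_j(\text{cycle moving position }j\text{ to front})$ weighted by the $\Gamma$ factors. This also shows that $L_i^\ast$ is the adjoint of $L_i$ for $\langle\cdot,\cdot\rangle_\Gamma$. Positivity of $P_m$ with $|\Gamma_{ij}|\le1$ is the hard part. I would follow Bo\.zejko--Speicher's result for quasi-multiplicative functions on Coxeter groups: write $P_m$ as the image of $\sum_\sigma\phi(\sigma)\sigma$ under a representation, and show positivity by a strict-positivity argument when $\|\Gamma\|_\infty<1$, then pass to the limit.

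The strict-positivity argument uses $R_m$: bound $\|R_m-I\|$ so that $R_m$ is invertible. The limiting case, which includes $\Gamma$ with entries equal to $\pm1$ off the diagonal, is obtained by continuity, since positivity is a closed condition. Quotienting by the kernel handles degeneracies. The assumption $\Gamma_{ii}<1$ is what gives boundedness. On words where letter $i$ appears $n$ times, $L_i^\ast L_i$ acts with norm about $\sum_{t<n}\Gamma_{ii}^t\le 1/(1-\Gamma_{ii})$, so it is bounded. More carefully, the estimate $\|L_i\|^2\le 1/(1-\max(\Gamma_{ii},0))$ follows from the Toeplitz relation $L_i^\ast L_i=\Gamma_{ii}L_iL_i^\ast+I$ together with a Neumann-type bound.

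\textbf{Toeplitz relations and the vacuum.} Property 1 is then a direct computation: for $u\ne v$, commuting $L_u^\ast$ past the first letter $e_v$ picks up the factor $\Gamma_{uv}$, and for $u=v$ there is an extra term $I$ from removing that first letter. Property 2 holds because $L_i^\ast\Omega=0$ by definition.

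\textbf{Traciality and faithfulness.} For traciality I would use the moment formula. Iterating Toeplitz shows that $\tau(S_{i_1}\cdots S_{i_p})$ is a sum over pairings $\pi$ with $i$ constant on the blocks, weighted by $\prod\Gamma_{i_a i_b}$ over crossings of blocks. Crossings are invariant under cyclic rotation of $[p]$, so $\tau$ is tracial on polynomials in the $S_i$, and by continuity on the whole $C^\ast$-algebra. For faithfulness I would introduce the right creation operators $R_i\xi=\xi\otimes e_i$. These commute with the $S_j$ up to the needed identity, and $\Omega$ is cyclic for the right algebra. Hence $\Omega$ is separating for $C^\ast(S)$: if $A\Omega=0$, then $A\,Q(R)\Omega=Q(R)A\Omega=0$ for every polynomial $Q$ in the right operators, so $A=0$. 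The same trick provides the right-action identity behind traciality.

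The main obstacle is positivity of $P_m$ for general real $\Gamma$ with $|\Gamma_{ij}|\le1$. Verifying the commutation of left and right operators in the braided setting is secondary but fiddly.
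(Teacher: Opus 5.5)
Your construction is the Bo\.zejko--Speicher deformed Fock space, which is the source the paper cites for this statement; the paper gives no proof of its own. It only notes that for adjacency matrices of graphs, the trace-monoid space of Definition~\ref{def:gammaSystem} does the job, and there positivity is automatic. Several of your steps are fine as stated: the factorization $P_m=(I\otimes P_{m-1})R_m$, the adjointness of your $L_i^\ast$, the Toeplitz computation, $L_i^\ast\Omega=0$, and traciality via rotation invariance of the crossing weights.

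The genuine gap is the step you flag yourself: positivity of $P_m$. The mechanism you propose cannot reach the stated range. On $\xi=e_i^{\otimes m}$, every weighted cycle acts by a power of $\Gamma_{ii}$, so
\[
R_m\xi=(1+\Gamma_{ii}+\cdots+\Gamma_{ii}^{m-1})\xi .
\]
Hence $R_m-I$ has an eigenvalue larger than $1$ once $\Gamma_{ii}>1/2$ and $m$ is large, and no norm makes $\|R_m-I\|<1$. The bound $\|R_m-I\|\le\sum_{k\ge 1}\|\Gamma\|_\infty^k$ therefore gives invertibility only for $\|\Gamma\|_\infty\le 1/2$. Continuity only carries strict positivity along paths on which invertibility is already known, and then to their closure. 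So your argument proves $P_m\ge 0$ for $\|\Gamma\|_\infty\le 1/2$ and nothing beyond. That excludes exactly the cases the paper uses: $q=e^{-\alpha}$ close to $1$, and adjacency matrices with entries equal to $1$.

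To close the gap you need a non-perturbative reason why $P_m$ (equivalently $R_m$, given the induction hypothesis) is invertible for every $\Gamma$ with $\|\Gamma\|_\infty<1$, or a direct proof that $P_m\ge 0$ whenever $|\Gamma_{ij}|\le 1$. Possible sources:
\begin{itemize}
\item the Bo\.zejko--Speicher positivity theorem for braided quasi-multiplicative functions on Coxeter groups, which is the nontrivial content of the cited result;
\item an exact Zagier-type factorization of $R_m$ into factors $I-X$ and $(I-Y)^{-1}$, with $X,Y$ products of braid operators and hence of norm $<1$;
\item realizing $\langle\cdot,\cdot\rangle_\Gamma$ as a limit of manifestly positive forms via a central-limit construction with random commutation signs.
\end{itemize}
Without one of these, or simply citing the theorem as the paper does, properties 1--3 rest on a form you have not shown to be an inner product.

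Two smaller repairs are also needed.

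Faithfulness: polynomials in your right creation operators $R_i\xi=\xi\otimes e_i$ do not commute with $S_i$. Their commutator with $L_i^\ast$ is the nonzero diagonal operator $e_{i_1}\otimes\cdots\otimes e_{i_m}\mapsto\prod_{s}\Gamma_{i\,i_s}\,e_{i_1}\otimes\cdots\otimes e_{i_m}$. You need the right fields $D_i=R_i+R_i^\ast$, where $R_i^\ast$ deletes an occurrence of $e_i$ with weight $\prod\Gamma_{i\,\cdot}$ over the letters to its right. Then $[L_j^\ast,R_i]=-[L_j,R_i^\ast]$ equals $\delta_{ij}$ times that diagonal operator, so $[S_j,D_i]=0$. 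Also $\Omega$ is cyclic for polynomials in the $D_i$, by induction on degree, so the separating-vector argument goes through.

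Boundedness: the Toeplitz relation alone is circular for a priori unbounded operators. Run it on the finite-dimensional graded pieces. With $c_m$ the squared norm of $L_i$ on degree $m$, you get $c_m\le 1+\max\{\Gamma_{ii},0\}\,c_{m-1}$, which is what your first heuristic amounts to.
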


\begin{remark} 
    Although the concrete Hilbert space representation need not be unique, conditions 1-2-3 of
    Theorem~\ref{thm:existenceToeplitz}  uniquely determine the information needed for weak and strong convergence
    to $(S_1, \ldots, S_k)$.
    Indeed, it is not hard to see that conditions 1 and 2 uniquely determine by induction the joint moments  $\tau(P(S_1, \ldots, S_k))$
    for every noncommutative polynomial $P$. Condition 3 then implies by Lemma~\ref{lem:faithful} that
    these moments also determine the norm of every such polynomial.
\end{remark}

Definitions~\ref{def:traceMonoid} and~\ref{def:gammaSystem} follow a 
known construction
for proving Theorem~\ref{thm:existenceToeplitz} in the special case where $\Gamma$ is the adjacency matrix of a graph~\cite{proofGammaRepresentation,youssefTuran}. In this case,
$\calH$ is the Hilbert space indexed by the elements of the trace monoid,
$L_1,\ldots,L_k$ are the corresponding creation operators, and $x$ is the
basis vector corresponding to the empty word.

\begin{definition}\label{def:qGaussianSystem}
    Let $q\in [0,1)$. A {\em $q$-Gaussian system} on $k$ variables
    is a tuple  $(S_1, \ldots, S_k)$
    satisfying the conclusion of  Theorem~\ref{thm:existenceToeplitz}
    for $\Gamma_{ij} = q$ for all $i,j\in [k]$. Each $S_i$ is called a {\em $q$-Gaussian variable}.
\end{definition}

Finally, we compute the moments of $\Gamma$-independent and $q$-Gaussian systems explicitly. The {\em crossing graph}, $G_\pi$, 
of a pairing $\pi$ is the graph whose vertices
$V(G_\pi) = E(\pi)$ are edges of $\pi$, and
two vertices in $G_\pi$ are adjacent if
the corresponding edges form a crossing in $\pi$. Two edges $uu',vv'\in E(\pi)$ are {\em crossing} if $u<v<u'<v'$ or $v<u<v'<u'$.

\begin{lemma}[See {\cite[Proposition 2]{brownianBozejko}}]\label{lem:momentsQGaussian}
    Let $(S_1, \ldots, S_k)$ be a $q$-Gaussian system. Then 
    \[
        \tau(S_{w_1}\cdots S_{w_p}) = \sum\limits_{\substack{\pi\in \calP(\bd w)}} q^{|E(G_\pi)|}\qquad \forall p\ge 1\quad \forall \bd w\in [k]^p\,.
    \]
\end{lemma}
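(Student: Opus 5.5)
The plan is to prove the formula by expanding the moment using the Toeplitz relations and tracking how each annihilation operator finds its partner. Write $S_i = L_i + L_i^\ast$, so that
\[
\tau(S_{w_1}\cdots S_{w_p}) = \sum_{\epsilon\in\{1,\ast\}^p} \iprod{x, L_{w_1}^{\epsilon_1}\cdots L_{w_p}^{\epsilon_p} x}.
\]
We evaluate each summand by acting on $x$ from the right. Since $L_i^\ast x=0$ by Theorem~\ref{thm:existenceToeplitz}, reading right to left, each creation operator adds a "particle". Each annihilation operator $L_u^\ast$ must then be commuted past the creation operators currently applied to $x$ using~\eqref{eq:toeplitz},
\[
L_u^\ast L_v = q L_v L_u^\ast + \one\{u=v\} I.
\]

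Pushing $L_u^\ast$ rightward through a product $L_{v_1}\cdots L_{v_m}x$ gives a sum over $j$. In the $j$-th term, $L_u^\ast$ contracts with $L_{v_j}$ (this requires $u=v_j$) and picks up a factor $q^{j-1}$, one factor $q$ for each creation operator it passed over. The term in which it reaches $x$ vanishes.

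By induction on $p$, the summand for $\epsilon$ then becomes a sum over the ways to match each annihilation to a still-unmatched earlier-applied creation with the same label. For the result to be nonzero with $\tau$ evaluated at $x$, every creation must eventually be annihilated. Indeed, $\iprod{x, L_{v_1}\cdots L_{v_m}x} = \iprod{L_{v_1}^\ast x,\ \cdot\ } = 0$ whenever $m\ge1$. The surviving terms therefore correspond to pairings $\pi\in\calP(\bd w)$, with $\epsilon$ determined by $\pi$. In positional terms, the closing index of each pair is the annihilation and the opening index is the creation, where "opening" means the position further to the right in the operator word. A short reindexing fixes the orientation.

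The key counting step is to check that the exponent of $q$ equals the number of crossings. When the annihilation at closing position $c$ contracts with the creation at opening position $o$, the creations it passes over are exactly the positions $o'$ lying strictly between $c$ and $o$ whose partner lies outside that interval. Positions whose partner lies inside the interval have already been annihilated. Each such $o'$ pairs with its partner to form a pair crossing $\{c,o\}$, and each crossing pair of edges is counted exactly once, namely at whichever of the two contractions happens later. Summing over edges gives $q^{|E(G_\pi)|}$.

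The main obstacle is this bookkeeping. One must make sure the order of contractions is consistent, so that each crossing is counted once and not twice, and that nested pairs contribute no factor. I would formalize it by induction on $p$, removing the first annihilation performed (the innermost one) together with its partner. This reduces to a pairing on $p-2$ points, with the removed edge crossing exactly $j-1$ remaining edges. Uniqueness of moments, by the Remark following Theorem~\ref{thm:existenceToeplitz}, makes the result independent of the representation.
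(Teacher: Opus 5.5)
Your argument is correct. The paper gives no proof of this lemma and simply cites Bo\.zejko--Speicher, whose proof is exactly this computation: the vacuum expectation is evaluated from the $q$-commutation relations, and your final induction makes it rigorous. That induction removes the first annihilation performed together with its partner, and every position strictly between them is a still-present creation whose pair crosses the removed one. It uses only the Toeplitz relations, $L_i^\ast x=0$ and $\|x\|=1$, so the appeal to the Remark is not needed.

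One slip in the informal bookkeeping: for crossing pairs $c_1<c_2<o_1<o_2$, the factor $q$ is picked up at the contraction of $c_2$ with $o_2$. Acting right to left, that is the contraction performed \emph{first}, not the later one. Your induction already accounts for this correctly.
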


\begin{lemma}[See {\cite[Theorem 5.2]{gammaIndependenceDefinition}}]\label{lem:momentsGamma}
    Let $(S_u)_{u\in V(\Gamma)}$ be a $\Gamma$-independent semicircular system.
    Then for any $p\ge 1$ and any choice of complex coefficients $\{c_{i,u} : i \in [p], u\in V(\Gamma)\}$,
    \[
        \tau\left(\left(\sum_{u\in V(\Gamma)} c_{1,u} S_u\right)\cdots \left(\sum_{u\in V(\Gamma)} c_{p,u} S_u\right)\right) = \sum_{\substack{\pi \in \calP(p)\\\varphi \in \Hom(G_\pi, \Gamma)}}  \prod_{ij\in E(\pi)} c_{i,\varphi(ij)} c_{j,\varphi(ij)}\,.
    \]
    In particular,
    \begin{equation*}
        \tau\left(\left(\sum_{u\in V(\Gamma)} S_u\right)^p\right) = \sum_{\pi \in \calP(p)} \left|\Hom(G_\pi,\Gamma)\right|\,.
    \end{equation*}
\end{lemma}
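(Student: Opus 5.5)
We prove the first formula; the second is the special case $c_{i,u}=1$. Both sides are multilinear in the coefficient vectors $(c_{i,\cdot})_{i\in[p]}$. So it suffices to prove, for every word $\bd u=(u_1,\dots,u_p)\in V(\Gamma)^p$,
\[
    \tau(S_{u_1}\cdots S_{u_p}) \;=\; \#\bigl\{\pi\in\calP(\bd u) : \{u_i,u_{i'}\}\in E(\Gamma)\text{ whenever } ii',jj'\in E(\pi)\text{ cross}\bigr\}\,.
\]
Granting this, expand the left-hand side of the lemma as $\sum_{\bd u}\prod_i c_{i,u_i}\,\tau(S_{u_1}\cdots S_{u_p})$. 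Then regroup: a pair $(\bd u,\pi)$ counted above is the same as a pair $(\pi,\varphi)$ with $\varphi(ij)\defeq u_i=u_j$. The crossing condition says exactly that $\varphi\in\Hom(G_\pi,\Gamma)$. Since $\Gamma$ has no loops, crossing edges must carry distinct adjacent labels. The weight $\prod_i c_{i,u_i}$ becomes $\prod_{ij\in E(\pi)}c_{i,\varphi(ij)}c_{j,\varphi(ij)}$, which is the claimed formula.

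To prove the displayed identity, write $S_u=L_u+L_u^\ast$ and expand $\iprod{\delta_\varnothing, S_{u_1}\cdots S_{u_p}\delta_\varnothing}$ as a sum over choices $\epsilon\in\{\text{cre},\text{ann}\}^p$. Each term applies operators to $\delta_\varnothing$ from position $p$ down to position $1$. Every intermediate vector is either $0$ or a basis vector $\delta_w$, so each term contributes $0$ or $1$. It contributes $1$ exactly when no annihilation kills the vector and the final word is empty.

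The key combinatorial fact about the trace monoid is the following. If $w=x_1\cdots x_m$ is a representative word, then $w=w'\cdot u$ for some $w'$ iff there is an index $s$ with $x_s=u$ such that $x_t$ is adjacent to $u$ for all $t>s$. Moreover, $s$ is then the last occurrence of $u$, and $w'$ is the class of the word with $x_s$ deleted. The "if" direction is immediate from the commutation moves. For "only if" and uniqueness, note that swapping adjacent letters never changes the relative order of two non-adjacent letters, including two copies of the same letter. So the relative order of $u$ and each letter not adjacent to $u$ is an invariant of the class of $w$ under such swaps. I expect this invariant to be the main technical point. I will justify it via the standard projection criterion: two words are equal in $\calT$ iff their projections onto every pair of non-adjacent letters agree. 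I will cite the Cartier–Foata theory for this, or prove the needed direction directly by induction on the number of swaps.

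With this fact, I attach a pairing to each surviving term by tracking letter occurrences. A creation at position $j$ appends an occurrence labelled by $j$. An annihilation at position $i$ removes the last occurrence of $u_i$, and we pair $i$ with the position $j>i$ that created it. This forces $u_j=u_i$. The letters sitting after that occurrence at time $i$ are exactly the occurrences created at positions $k\in(i,j)$ that are still alive. Those are the $k$ whose partner $k'$ satisfies $k'<i$, i.e., the edges $k'k$ crossing $ij$. Hence the term survives iff every edge crossing $ij$ has a label adjacent to $u_i$. A term ending at $\delta_\varnothing$ has every creation matched, so its pairing is perfect.

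Conversely, take any $\pi$ in the target set and declare closing indices (the larger element of each pair) to be creations and opening indices to be annihilations. By the same analysis each annihilation succeeds and removes precisely its partner's occurrence: any later occurrence of the same letter would come from a crossing edge with equal label, which is not adjacent since $\Gamma$ has no loops. Thus the maps $\epsilon\mapsto\pi$ and $\pi\mapsto\epsilon$ are inverse bijections between surviving terms and admissible pairings, which proves the identity.
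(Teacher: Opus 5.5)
Your argument is correct, and it takes a genuinely different route from the paper. The paper does not prove this lemma; it cites Speicher--Wysoczański, where the moment formula is obtained within the general framework of $\varepsilon$-independence rather than from the specific trace-monoid model of Definition~\ref{def:gammaSystem}. Applying that citation here therefore relies on the known fact that the trace-monoid creation operators realise a $\Gamma$-independent semicircular system.

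You instead compute directly in this model. You expand into creation/annihilation patterns, characterise right divisibility by $u$ through the last occurrence of $u$, and set up an explicit bijection between surviving patterns and pairings whose crossing blocks carry adjacent labels. This makes the lemma self-contained for exactly the operators the paper uses.

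What the more algebraic route buys is generality. Normal-ordering with the Toeplitz relations~\eqref{eq:toeplitz} and $L_u^\ast\delta_\varnothing=0$ produces the weight $\prod_{\text{crossings}}\Gamma_{uv}$ for an arbitrary coefficient matrix $\Gamma$. It therefore also yields Lemma~\ref{lem:momentsQGaussian} by the same computation, whereas your last-occurrence argument is specific to $\{0,1\}$ coefficients.

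Two small remarks. First, in your displayed identity the condition should read $\{u_i,u_j\}\in E(\Gamma)$. As written, $\{u_i,u_{i'}\}$ is a singleton because $ii'\in E(\pi)$, which would restrict the count to non-crossing pairings.

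Second, for the ``moreover'' part of your trace-monoid fact you do not need the nontrivial direction of the projection criterion. Moving $x_s$ to the end shows that $w$ equals the class of $x$ with $x_s$ deleted, times $u$. The uniqueness of $w'$ asserted after Definition~\ref{def:gammaSystem} then identifies $w'$. The ``only if'' direction needs only the invariance of the relative order of non-commuting occurrences under a single swap, which is immediate.
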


\subsection{Kikuchi walks}

In this subsection, we start introducing the
common terminology for the technical 
sections. 
In particular, we define {\em Kikuchi walks}, which are the combinatorial objects that arise in the trace moment expansion of Kikuchi matrices.

\begin{definition}
    A {\em directed hyperedge} is a pair 
    \[
        e = (H(e),T(e))\in {\binom {[n]} r}\times {\binom {[n]} r}\quad\text{such that $H(e)\cap T(e)=\varnothing$}\,.
    \]
    The sets $H(e)$ and $T(e)$ are called the {\em head} and {\em tail} of $e$, respectively. The {\em support} of
    $e$ is $\supp(e) \defeq H(e)\cup T(e)$.
    The {\em reverse} of $e$ is $e^\ast \defeq (T(e), H(e))$. Let $\calE_{n,r}$ be the set of all directed hyperedges.
\end{definition}

\begin{definition}
A {\em Kikuchi walk} of length $p$ is a tuple
$\bd e = (e_1,\ldots, e_{p})\in \calE_{n,r}^p$
for which
\begin{equation*}
    U_0=[\ell]\,,\qquad U_i \defeq \left(U_{i-1}\cup H(e_i)\right)\setminus T(e_i)\,,
\end{equation*}
satisfy
$U_p=U_0$, $T(e_i)\subseteq U_{i-1}$ and $H(e_i)\cap U_{i-1} = \varnothing$ for all $i\in [p]$.
\end{definition}

Define the support sequence of a Kikuchi walk by $\supp(\bd e) \defeq (\supp(e_i))_{i\in [p]}$. We call a Kikuchi walk {\em even} if every support appearing in its
support sequence occurs an even number of times, and denote by $\calW_p$ the set of even Kikuchi
walks of length $p$. Note that even Kikuchi walks
automatically have even length.

To lighten notation, we will write $U_i$, $T_i\defeq T(e_i)$, $H_i \defeq H(e_i)$, where the dependence on $\bd e$ is implicit but should be clear from context.

\begin{lemma}\label{lem:traceMoments}
    The trace moments of the Kikuchi matrix $Y_n$ and
    directed Kikuchi matrix $X_n$ satisfy
    \begin{align}
        \E \tr Y_n^{p} &= {d_n^{-\frac p2}} \left|\left\{(\bd e,\pi) \in \calW_p \times \calP(p) :  \supp(e_i)=\supp(e_j) \;\;\forall ij \in E(\pi)\right\}\right|\,,\label{eq:exactExpansionKikuchi}\\
        \E \tr X_n^{p} &= {d_n^{-\frac p2}} \left|\left\{(\bd e,\pi) \in \calW_p \times \calP(p) :  e_i = e_j^\ast \;\;\forall ij \in E(\pi)\right\}\right|\,.\label{eq:exactExpansionDirected}
    \end{align}
\end{lemma}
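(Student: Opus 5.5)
We prove Lemma~\ref{lem:traceMoments} by expanding the normalized trace as a sum over closed walks on $\binom{[n]}{\ell}$, identifying these walks with Kikuchi walks, and then evaluating the Gaussian expectations with Wick's formula (Lemma~\ref{lem:wick}).

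\textbf{Step 1: write the trace as a sum over walks.} We have
\[
\tr Y_n^p=\binom n\ell^{-1}\sum_{U_0,\dots,U_{p-1}}\prod_{i=1}^p Y_n[U_{i-1},U_i], \qquad U_p=U_0 .
\]
A step $U_{i-1}\to U_i$ with $|U_{i-1}\triangle U_i|=2r$ corresponds bijectively to a directed hyperedge $e_i=(H_i,T_i)$, where $H_i=U_i\setminus U_{i-1}$ and $T_i=U_{i-1}\setminus U_i$. These satisfy $T_i\subseteq U_{i-1}$, $H_i\cap U_{i-1}=\varnothing$ and $U_i=(U_{i-1}\cup H_i)\setminus T_i$. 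Conversely, a starting point together with such a sequence of hyperedges determines the closed walk. For the directed matrix, the entry on this step is $d_n^{-1/2}g_{H_i,T_i}$, since $V\setminus U=H_i$ and $U\setminus V=T_i$. For the undirected matrix, the entry is $d_n^{-1/2}g_{\supp(e_i)}$.

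\textbf{Step 2: reduce to the base point $U_0=[\ell]$.} The symmetric group on $[n]$ acts on walks and preserves the joint law of the $g$'s: it permutes the indices $I$, and for the directed matrix the pairs $(H,T)$, in a way compatible with the conjugation relation. The expectation of a walk's term therefore depends only on its orbit. Averaging over the $\binom n\ell$ starting points cancels the normalization $\binom n\ell^{-1}$. We obtain
\[
\E\tr Y_n^p=d_n^{-p/2}\sum_{\bd e}\E\prod_i g_{\supp(e_i)},
\]
where the sum runs over Kikuchi walks in the sense of the definition, all based at $[\ell]$. The same formula holds for $X_n$ with $g_{H_i,T_i}$ in place of $g_{\supp(e_i)}$.

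\textbf{Step 3: apply Wick's formula.} For $Y_n$, the variables are real with $\E g_Ig_J=\one\{I=J\}$, so Lemma~\ref{lem:wick} gives
\[
\E\prod_i g_{\supp(e_i)}=\#\{\pi\in\calP(p):\supp(e_i)=\supp(e_j)\ \forall ij\in E(\pi)\}.
\]
This count is zero unless every support occurs an even number of times, i.e.\ unless $\bd e\in\calW_p$. Restricting the sum to $\calW_p$ therefore loses nothing, which yields~\eqref{eq:exactExpansionKikuchi}.

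For $X_n$, the $g_{H,T}$ are complex, but Wick's formula applies to the centered jointly Gaussian family given by their real and imaginary parts, with products expanded multilinearly. We need $\E[g_{H,T}g_{H',T'}]$. For a standard complex Gaussian $g$ we have $\E g^2=0$ and $\E g\bar g=1$, and independence across distinct unordered pairs gives zero covariance otherwise. Hence
\[
\E[g_{H,T}g_{H',T'}]=\one\{(H',T')=(T,H)\},
\]
that is, $e_j=e_i^\ast$. Any nonzero term has every support appearing with pairs of the form $e,e^\ast$, so again only walks in $\calW_p$ contribute. This gives~\eqref{eq:exactExpansionDirected}.

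The only delicate point is the complex case in Step 3: the covariance must vanish for identical orientations and equal $1$ only for reversed ones. This relies on the convention that $g_{H,T}$ is a standard complex Gaussian, so that $\E g^2=0$.
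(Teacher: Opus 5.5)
Your proposal is correct and takes the same route as the paper. You expand the normalized trace over closed walks, use relabeling invariance to fix $U_0=[\ell]$, and apply Wick's formula with covariances $\one\{\supp(e_i)=\supp(e_j)\}$ for $Y_n$ and $\one\{e_i=e_j^\ast\}$ for $X_n$; your argument is a more detailed version of the paper's one-line proof.
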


\begin{proof}
    This is a direct application
    of Wick's formula (Lemma~\ref{lem:wick}), together with the invariance of $X_n$ and $Y_n$ by relabeling (which allows us to average out the choice of $U_0$).
\end{proof}

We organize the Kikuchi walks that contribute to~\eqref{eq:exactExpansionKikuchi} and~\eqref{eq:exactExpansionDirected} in terms of the
number of vertices that they introduce. For $i\in [p]$, let \[R_i \defeq H_i\setminus \left(\bigcup_{j=0}^{i-1} U_j\right)\]
be the set of vertices introduced for the first time at step $i$.
For an even walk $\bd e$, every support appearing in $\supp(\bd e)$ occurs at least twice, a support
that has appeared previously cannot introduce any
new vertex, and any step can introduce at most $r$ vertices. Therefore, an even walk can introduce $\sum_{i=1}^p |R_i|\le rp/2$
vertices in total. 
We call the integer
$rp/2 - \sum_{i=1}^p |R_i|$ the {\em defect}
of the walk $\bd e$. 

\subsection{Kikuchi transports}\label{sec:transports}

By Lemma~\ref{lem:momentsGamma}, the
combinatorial objects
that contribute to the
moments of the $K(\ell,r)$-independent semicircular system 
are tuples $(\varphi,\pi)$ where
$\pi \in \calP(p)$ and
$\varphi \in \Hom(G_\pi,K(\ell,r))$.
In words, $\varphi$ is an assignment of elements of $\binom \ell r$ to
every pair in $\pi$, with the property that these subsets are
disjoint whenever two pairs cross.
To map the paired Kikuchi walks from
Lemma~\ref{lem:traceMoments} to
these predicted limiting objects,
we introduce the notion
of {\em Kikuchi transport}.

\begin{definition}\label{def:transport}
    Let $m\ge 0$ and $(\bd e, \pi)\in \calW_p\times \calP(p)$.
    A \emph{Kikuchi transport} of
    $(\bd e,\pi)$ is a sequence $\bd \rho=(\rho_0,\ldots,\rho_p)$ of injective maps $\rho_i:U_i\to[\ell+m]$ with
    $\rho_0=\textnormal{id}_{[\ell]}$, such that
    \begin{itemize}
        \item {(Pairing compatibility)}
        $\rho_{i-1}(T_i) = \rho_{j-1}(T_j)$ for any $ij\in E(\pi)$.
    
        \item {(Crossing compatibility)}
        $\rho_{i-1}(T_i)
        \cap \rho_{i'-1}(T_{i'})
        = \varnothing$ for any $\{ij,i'j'\}\in E(G_\pi)$.
    \end{itemize}
    We call $m$ the {\em excess size} of $\bd \rho$.
\end{definition}

\noindent Equivalently, a Kikuchi transport is a sequence of embeddings such that the map
\begin{equation}\label{eq:transport_homomorphism}
    \begin{aligned}
        G_\pi &\longrightarrow K(\ell+m,r)\\
        ij &\longmapsto \rho_{i-1}(T_i) = \rho_{j-1}(T_j)
    \end{aligned}
\end{equation}
is a graph homomorphism. By Lemma~\ref{lem:momentsGamma}, these are exactly the
objects that moments of Kneser
semicircular systems count (for $m=0$).

When designing Kikuchi transports,
we will try to make sure that they respect as much as possible the
{\em canonical bijections} $\phi_t : U_{t}\to U_{t-1}$, which are the maps fixing $U_t\cap U_{t-1} = U_{t-1}\setminus T_t = U_t\setminus H_t$, and mapping the
elements of $ H_t=U_t\setminus U_{t-1} $ to the
elements of $T_{t}=U_{t-1}\setminus U_t$ in
increasing order.

\begin{definition}\label{def:switches}
    Let $\bd \rho$ be a Kikuchi
    transport of $(\bd e,\pi)$.
    A pair $(t,u)\in [p]\times U_t$ is called a {\em switch} if $\rho_t(u)\neq \rho_{t-1}(\phi_{t}(u))$. The {\em cost} of
    $\bd \rho$ is the total number of
    switches $(t,u)$ it incurs.
\end{definition}
\section{Weak Convergence of Kikuchi Matrices} 
\label{sec:weak}

In this section, we prove weak convergence
of Kikuchi matrices  (Theorem~\ref{thm:weakIntro})
and directed Kikuchi matrices to their
$\Gamma$-independent limit as special
cases of Theorem~\ref{thm:weakConvergence} below.

\begin{definition}\label{def:generalizedKikuchi}
    Let $\ell,r,n$ be integers such that $1\le r\le \ell \le n-r$. Let $d_n=d_n(\ell,r)$ be as in~\eqref{eq:degreeKikuchi}.
    A {\em generalized Kikuchi matrix} $Z_n = Z_n(\ell,r)$ is a matrix
    whose entries have the form
    \[
        Z_n[U,V] \defeq \begin{cases}\dfrac 1 {\sqrt{d_n}} \, {g_{(V\setminus U, U\setminus V)}}  & \text{ if $|U \mathbin \triangle V|=2r$}\\
        0 & \text{ otherwise}\end{cases}
        \quad \forall U,V\in \binom {[n]} \ell\,,
    \]
    where $(g_e)_{e\in \calE_{n,r}}$ is a
    centered (real or complex) Gaussian vector
    satisfying $g_{e^\ast} = \overline{g_e}$, $\E |g_e|^2=1$
    and $\E \left[g_e g_f\right]=0$ whenever $\supp(e)\neq \supp(f)$.
\end{definition}

\noindent The Kikuchi matrix is obtained
by sampling a collection of i.i.d.\
standard real Gaussians $g'_S$ for $S\subseteq [n]$ of size $|S|=2r$ and setting $g_{e} = g'_{\supp(e)}$.
The directed Kikuchi matrix is obtained by sampling
$(g_e)_{e\in\calE_{n,r}}$ as standard complex Gaussians,
independently up to the symmetry
$g_{e^\ast}=\overline{g_e}$.

\begin{theorem}\label{thm:weakConvergence}
    Fix $k\ge 1$ and $1\le r\le \ell$. Let $\bd \Sigma = (\Sigma_1, \ldots, \Sigma_k)$ be as in~\eqref{eq:limitOperator}.
    Any sequence of $k$-tuples of independent generalized Kikuchi matrices $\bd Z_n = (Z_{n,1}, \ldots, Z_{n,k})$ converges weakly
    as $n\to\infty$ to $\bd \Sigma$.
\end{theorem}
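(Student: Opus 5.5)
We will use the moment method. We will show that for every word $\bd w=(w_1,\ldots,w_p)\in[k]^p$,
\[
\E\tr(Z_{n,w_1}\cdots Z_{n,w_p})\longrightarrow \tau(\Sigma_{w_1}\cdots\Sigma_{w_p}),
\]
and that the variance of $\tr(Z_{n,w_1}\cdots Z_{n,w_p})$ tends to $0$. Chebyshev's inequality then gives convergence in probability.

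\textbf{Expansion.} As in Lemma~\ref{lem:traceMoments}, we expand the mixed moment by Wick's formula (Lemma~\ref{lem:wick}) and use invariance under relabeling to fix $U_0=[\ell]$. This writes $\E\tr(Z_{n,w_1}\cdots Z_{n,w_p})$ as $d_n^{-p/2}$ times a sum over pairs $(\bd e,\pi)$. Here $\bd e$ is a Kikuchi walk and $\pi\in\calP(\bd w)$ has $\supp(e_i)=\supp(e_j)$ for each $ij\in E(\pi)$. Each pair is weighted by $\prod_{ij}\E[g_{e_i}g_{e_j}]$, which has modulus at most $1$ by Cauchy--Schwarz. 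Independence across the $k$ matrices forces $w_i=w_j$ on pairs.

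\textbf{Discarding defect.} We bound the number of walks with defect $D>0$ by $O_{p,\ell,r}(n^{rp/2-D})$. The walk is determined by the shape data (which slots are used, and how indices are identified), whose number is bounded in terms of $p,\ell,r$, together with the choice of new vertices. Since $d_n\asymp n^r$, this gives a total contribution of $O(n^{-D})\to0$. So only defect-zero walks survive. In such a walk, every opening step $i$ of $\pi$ introduces $r$ brand-new head vertices, and closing steps introduce nothing.

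\textbf{Leading term.} For defect-zero walks we argue as follows.
\begin{enumerate}
\item A closing step $j$ paired with $i$ must use the same support. Since the head $H_i$ consists of fresh vertices, the closing step must remove exactly $H_i$ and restore $T_i$, so $e_j=e_i^\ast$. Hence $\E[g_{e_i}g_{e_j}]=\E|g_{e_i}|^2=1$, and the directed/undirected distinction disappears. This is why the statement holds for all generalized Kikuchi matrices.
\item We track slot labels through the canonical bijections $\phi_t$. The map $\rho_t=\rho_{t-1}\circ\phi_t$ gives a zero-cost transport with $m=0$. We then show that $ij\mapsto\rho_{i-1}(T_i)$ is a homomorphism $G_\pi\to K(\ell,r)$. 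The point is that if $i<i'<j<j'$ cross, then at time $i'$ the fresh set $H_i$ is still present in $U_{i'-1}$, occupying the slots $\rho_{i-1}(T_i)$. The tail $T_{i'}$ must avoid $H_i$: otherwise removing part of $H_i$ early would contradict the requirement that $H_i$ be removed intact at step $j$, after $i'$ reinserts fresh vertices. Hence the slot sets are disjoint.
\item We check that for each $(\pi,\varphi)$ with $\varphi\in\Hom(G_\pi,K(\ell,r))$, the number of defect-zero walks mapping to it is $\prod_{\text{openers}}\binom{n-\ell-O(p)}{r}=(1+o(1))\binom{n-\ell}r^{p/2}$, coming from the choice of fresh heads. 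Combined with $d_n^{-p/2}$, this leaves $\binom\ell r^{-p/2}$ per pair $(\pi,\varphi)$, which is exactly the weight in Lemma~\ref{lem:momentsGamma} for $\Sigma$ with coefficients $\binom\ell r^{-1/2}\one\{\text{block }i\}$ on the graph $K^{(k)}(\ell,r)$.
\end{enumerate}
This bijection in step 3 is the main obstacle. The careful point is the converse direction: given $(\pi,\varphi)$, one must show that performing the prescribed slot replacements with fresh vertices always yields a valid walk. That is, the closing step $j$ must find $H_i$ intact in slots $\varphi(ij)$. Non-crossing pairs nested inside $ij$ restore their slots before $j$, and crossing ones use disjoint slots, so this holds by induction on $p$.

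\textbf{Variance.} We treat the variance similarly, via the second moment over two walks. Pairings that do not connect the two walks cancel exactly with $(\E\tr)^2$. Any connecting pair forces the second walk to share a support with the first, which costs at least a factor of order $n^{-1}$ in fresh vertices relative to the normalization. Hence $\operatorname{Var}=O(n^{-1})$, and weak convergence follows.
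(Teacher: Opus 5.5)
Your proposal is correct and follows essentially the same route as the paper: a Wick expansion over Kikuchi walks; discarding positive-defect walks via an $O(n^{rp/2-\Delta})$ count against $d_n^{p/2}\asymp n^{rp/2}$; the mirroring $e_j=e_i^\ast$ of defect-$0$ walks, which makes every surviving weight equal to $1$ for any admissible covariance; the cost-$0$ canonical-bijection transport giving a homomorphism $G_\pi\to K(\ell,r)$; counting the fresh opening heads; and an $O(1/n)$ variance bound. Two details need tightening: a generalized Kikuchi matrix need not be relabeling-invariant, so instead of fixing $U_0=[\ell]$ by invariance you should average over starting sets as the paper does with the permutations $\sigma_U$ (harmless, since all your bounds are uniform in $U$); and your step-2 claim that $H_i$ is intact in $U_{i'-1}$ fails when a pair $kl$ nested in $ij$, with slots overlapping $\varphi(ij)$, is still open at time $i'$ (e.g.\ $i<k<i'<l<j<j'$), so the crossing-disjointness, and likewise the mirroring in step 1, should be proved by induction on the earliest-closing pair, exactly as you already do for the converse direction.
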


The proofs in this section and in Sections~\ref{sec:norm} and~\ref{sec:strongConvergence}
are all based on counting 
Kikuchi walks and compatible pairings
appearing in the expansion of the trace
moments. The difference between the
arguments lies in how the walks are counted.
For weak convergence, the moment $p$ is fixed
as $n\to\infty$. In this regime, the dominant
contributions come from the Kikuchi walks
of defect $0$. Since $p$ is fixed, it then
suffices to use crude
bounds on the number of paired
walks of higher defect
(Lemma~\ref{lem:boundPairingGenus}).

\begin{lemma}\label{lem:genusZero}
    An even Kikuchi walk of length $p$ has defect $0$ if and only
    if $\supp(\bd e)$ consists of $p/2$ supports, each occurring twice, and
    the first occurrence of each support introduces
    exactly $r$ vertices.
\end{lemma}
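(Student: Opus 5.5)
The proof is a counting argument that makes precise the inequality $\sum_i |R_i|\le rp/2$ stated just before the lemma, and identifies when it is an equality.

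Let $s$ be the number of distinct supports in $\supp(\bd e)$, with multiplicities $m_1,\ldots,m_s$. Since the walk is even, each $m_k\ge 2$ is even, so $2s\le \sum_k m_k = p$, i.e.\ $s\le p/2$. Equality $s=p/2$ holds if and only if every $m_k=2$.

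Next, I would bound the new vertices by support, using two facts.
\begin{itemize}
\item If step $i$ is not the first occurrence of its support, then $R_i=\varnothing$. Indeed, $H_i\subseteq \supp(e_i)=\supp(e_{i'})$ for some earlier $i'<i$. At step $i'$ one has $T_{i'}\subseteq U_{i'-1}$ and $H_{i'}\subseteq U_{i'}$, so $\supp(e_{i'})\subseteq U_{i'-1}\cup U_{i'}$, and both sets appear in the union defining $R_i$.
\item At a first occurrence, $|R_i|\le |H_i|=r$.
\end{itemize}
Hence
\[
    \sum_{i=1}^p |R_i| = \sum_{i \text{ first occ.}} |R_i| \le rs \le rp/2 .
\]
The defect is therefore
\[
    rp/2-\sum_i|R_i| = r(p/2-s) + \sum_{i\text{ first occ.}}\bigl(r-|R_i|\bigr),
\]
a sum of two nonnegative terms.

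So the defect vanishes if and only if both terms vanish. The first vanishes exactly when $s=p/2$, that is, when every support occurs exactly twice. The second vanishes exactly when every first occurrence introduces exactly $r$ vertices. This gives both directions of the lemma at once.

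The only point needing care is the first bullet: that a repeated support cannot introduce new vertices. This follows directly from the walk conditions as shown, so I do not expect a real obstacle.
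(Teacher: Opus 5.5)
Your proof is correct and follows the paper's argument. The paper only notes that the bound $\sum_i |R_i|\le rp/2$ must be tight for a defect-$0$ walk and calls the converse immediate; your decomposition of the defect as $r(p/2-s)$ plus the sum of $r-|R_i|$ over first occurrences, both nonnegative, spells out that equality case and gives both directions at once.
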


\begin{proof}
    For a defect-$0$ walk, the above argument that bounds the number of vertices introduced in the
    walk must hold with equality, so every support
    in $\supp(\bd e)$ occurs exactly twice and every 
    first occurrence of a support introduces exactly $r$ vertices.
    The converse statement is immediate.
\end{proof}

Lemma~\ref{lem:genusZero} shows that a defect-$0$ walk $\bd e$ determines a unique pairing
$\pi(\bd e)\in\calP(\supp(\bd e))$. This process pairs
together reverse hyperedges:

\begin{lemma}\label{lem:zeroGenusReverse}
    If $\bd e$ has defect $0$, then $e_i=e_j^\ast$
    for every $ij\in E(\pi(\bd e))$.
\end{lemma}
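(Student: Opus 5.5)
Fix $ij\in E(\pi(\bd e))$ with $i<j$. By Lemma~\ref{lem:genusZero}, $\supp(e_i)=\supp(e_j)=:S$, and step $i$ introduces exactly $r$ new vertices, so $R_i=H_i$. The plan is to show $H_j=T_i$ and $T_j=H_i$. Since $H_j\cup T_j=S=H_i\cup T_i$ with both unions disjoint and all four sets of size $r$, it suffices to show $H_i\subseteq T_j$.

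\textbf{Step 1: vertices of $H_i$ never leave except at step $j$.} Let $v\in H_i$. Then $v\in R_i$, so $v$ first appears in the walk at step $i$. Hence $v\notin U_0=[\ell]$, and $v\in U_i$. Since $U_p=U_0$, the vertex $v$ must be removed at some step $t>i$, meaning $v\in T_t$. Choose the first such $t$.

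\textbf{Step 2: the removal step must be $j$.} I claim the support $\supp(e_t)$ occurs before time $t$, and that this earlier occurrence must be $e_i$. Suppose instead that step $t$ is the first occurrence of its support. Then, by Lemma~\ref{lem:genusZero}, $H_t$ consists of $r$ brand-new vertices. So far this gives no contradiction. Instead I use the pairing. Consider the partner of step $t$, and go back to the first time any vertex is removed. It is cleaner to argue as follows. Every vertex $v$ outside $[\ell]$ that is ever introduced at a first-occurrence step $s$ lies in $\supp(e_s)$. Its removal step $t$ has $v\in\supp(e_t)$. The key step is to show $\supp(e_t)=\supp(e_s)$.

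Counting argument: there are $p/2$ first-occurrence steps, each introducing $r$ fresh vertices, so $rp/2$ fresh vertices in total. Each fresh vertex must be removed at least once, since $U_p=[\ell]$. Removals occur only in tails, and there are $p$ tails of size $r$, giving $rp$ removal slots in all. The first-occurrence steps have tails inside $U_{s-1}$, and so do the second-occurrence steps. This total alone does not force the conclusion.

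A sharper route is to look at second occurrences. At the second occurrence $j$ of $S$, we have $H_j\cap U_{j-1}=\varnothing$ and $T_j\subseteq U_{j-1}$, with $H_j\cup T_j=S=H_i\cup T_i$. So it suffices to show $H_i\subseteq U_{j-1}$ and $T_i\cap U_{j-1}=\varnothing$. By induction on $j$ in increasing order, I will prove the invariant that, between $i$ and $j$, the vertices of $H_i$ stay in the current set and those of $T_i$ stay out. A vertex of $H_i$ could only leave through some tail $T_t$, where $t\in(i,j)$. If $t$ is a second occurrence with partner $s<t$, then by the induction hypothesis $T_t=H_s$, so $v\in H_s\cap H_i$. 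Because both are fresh sets, this forces $s=i$, and hence $t=j$, a contradiction. If $t$ is a first occurrence, then $v\in\supp(e_t)$, so the partner $t'>t$ also contains $v$.

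\textbf{Main obstacle.} The hard case is the first-occurrence case just described. Here the issue is that a fresh vertex $v$ may lie in the support of three distinct supports $S_i$, $S_t$, $S_{t'}$. My plan is to rule this out by a global counting argument. Each fresh vertex belongs to at least one support, namely the one that introduced it. Its appearance in a further support is then counted against the defect bound. Concretely, I would sum, over fresh vertices, the number of distinct supports containing them. The aim is to show that defect $0$ forces this count to equal exactly one, with removal happening only at the partner step. The symmetric argument for $T_i$ (such vertices cannot re-enter before step $j$) then completes the proof.
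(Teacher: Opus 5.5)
There is a genuine gap, and it is exactly the case you flag as the main obstacle. That case cannot be ruled out, because it actually occurs. Your invariant says that no vertex of $H_i$ leaves the current set strictly between steps $i$ and $j$. It fails as soon as another pair is nested inside $ij$. Take $\ell=r=1$ and the walk $\{1\}\to\{2\}\to\{3\}\to\{2\}\to\{1\}$, that is, $e_1=(\{2\},\{1\})$, $e_2=(\{3\},\{2\})$, $e_3=(\{2\},\{3\})$, $e_4=(\{1\},\{2\})$. The supports are $\{1,2\},\{2,3\},\{2,3\},\{1,2\}$. Steps $1$ and $2$ each introduce one fresh vertex, so the defect is $0$ and $E(\pi(\bd e))=\{14,23\}$. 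For the pair $14$, the vertex $2\in H_1$ is removed at $t=2$, which is a first occurrence, and returns only at step $3$.

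The same example refutes the global count you propose. The fresh vertex $2$ lies in two distinct supports, and it is removed at a step other than its partner step. No counting argument can close this case, so the $H_i$-half of your invariant must be dropped. Your treatment of the second-occurrence case is correct: $T_t=H_s$ and disjointness of fresh heads force $s=i$. But it serves the half that is false. The earlier attempts in your Step~2 are, as you note yourself, inconclusive, so as written the proposal contains no proof.

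The fix is to work only with the other half. Your own reduction shows that $T_i\cap U_{j-1}=\varnothing$ is enough by itself. Together with $T_i\subseteq S=H_j\sqcup T_j$ and $T_j\subseteq U_{j-1}$, it gives $T_i\subseteq H_j$. Comparing cardinalities then gives $T_i=H_j$ and $H_i=T_j$.

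The paper proves this by induction on the length $p$, not pair by pair. Choose $j$ to be the \emph{smallest} closing index, with partner $i$. Then every step strictly between $i$ and $j$ is a first occurrence, so its head consists of never-seen vertices. Hence nothing removed at step $i$ can re-enter before step $j$, which gives $e_j=e_i^\ast$. Deleting steps $i$ and $j$ leaves a defect-$0$ walk of length $p-2$, and induction applies.

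If you want to keep your induction over closing indices in increasing order, the $T_i$-half can also be proved by descent:
\begin{itemize}
\item Suppose $u\in T_i$ first re-enters at some $k\in(i,j)$.
\item Then $k$ must be a closing index, since first-occurrence heads are fresh.
\item The induction hypothesis gives $u\in H_k=T_{k'}$, and minimality of $k$ forces $k'<i$.
\item Since $u\in T_{k'}$ and $u\in U_{i-1}$ with $k'\le i-1<k$, the pair $k'k$ violates the same stay-out property. Its closing index is still below $j$, and its opening index is strictly smaller.
\end{itemize}
Iterating gives an infinite descent, which is impossible.
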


\begin{proof}
    We proceed by induction on the
    length $p$ of $\bd e$. The statement holds for $p=0$.
    
    Let $p\ge 2$ and suppose that the statement holds for $p-2$. Let $j\in [p]$ be the smallest closing index, and let $ij\in E(\pi(\bd e))$. For every $i<k<j$,
    $e_k$ must be the first occurrence of some
    support by minimality of $j$, and by Lemma~\ref{lem:genusZero}, the vertices
    in $H_k$ are all new. This means that none
    of the vertices of $T_i$ that have been removed
    at step $i$ can be re-introduced before step $j$.
    Since $T_i\subseteq \supp(e_j)$ and $|T_i|=|H_j|=r$, it must be that $T_i=H_j$.
    Since heads and tails are disjoint and $\supp(e_i)=\supp(e_j)$ by assumption, it
    follows automatically that $e_i=e_j^\ast$.
    
    Finally, removing $e_i$ and $e_j$ from the walk yields a defect-$0$ walk of length $p-2$ by Lemma~\ref{lem:genusZero}. By induction, the property holds for any edge in $\pi(\bd e)$.
\end{proof}

\begin{lemma}\label{lem:countingGenusZero}
    Let $p$ be even and $\pi\in \calP(p)$. The number of even Kikuchi walks of length $p$, defect-$0$, and such that $\pi(\bd e)=\pi$ is
    \[
        \frac {(n-\ell)\cdots (n-\ell-\frac {rp} 2+1)} {(r!)^{\frac p 2}} |\Hom(G_{\pi},K(\ell,r))|\,.
    \]
\end{lemma}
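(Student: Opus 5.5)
We will establish a bijection between the set of defect-$0$ even Kikuchi walks with $\pi(\bd e)=\pi$ and pairs $(\psi,\varphi)$, where $\varphi\in\Hom(G_\pi,K(\ell,r))$ and $\psi$ is the data of which new vertices are introduced, counted modulo orderings within each opening step. The map from walks to homomorphisms will use the canonical bijections $\phi_t$: compose them to obtain $\Phi_t=\phi_1\circ\cdots\circ\phi_t:U_t\to U_0=[\ell]$. These maps record, for each current vertex, the ``slot'' in $[\ell]$ that it occupies. For each pair $ij\in E(\pi)$ we set $\varphi(ij)\defeq \Phi_{i-1}(T_i)\in\binom{[\ell]}r$. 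In other words, $\bd\rho=(\Phi_t)$ is a zero-cost Kikuchi transport of excess size $0$, and we must check it is genuinely a transport.

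First, pairing compatibility: by Lemma~\ref{lem:zeroGenusReverse}, $e_j=e_i^\ast$, so $H_j=T_i$ and $T_j=H_i$. The vertices of $H_i$ are new at step $i$ (Lemma~\ref{lem:genusZero}), and no step strictly between $i$ and $j$ removes them, since every tail removed there is paired with a step that must re-add it. We need $\Phi_{j-1}(T_j)=\Phi_{i-1}(T_i)$. Under the canonical bijection, $H_i$ is mapped onto the slots of $T_i$ in increasing order. We then check, by induction on the nested/crossing structure, that intermediate steps never move elements of $H_i$ between slots: each $\phi_t$ fixes $U_t\cap U_{t-1}$. So $\Phi_{j-1}(H_i)=\Phi_{i-1}(T_i)$. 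There is a subtlety here: if a step $k\in(i,j)$ has $T_k\cap H_i\ne\varnothing$, the slot would change. We rule this out by showing that the tails removed at intermediate steps avoid $H_i$ unless their pair closes before $j$, using minimality arguments as in the proof of Lemma~\ref{lem:zeroGenusReverse}. Even then, the pair restores the same slots, which we handle by induction on $p$ (removing an innermost-closing pair as in that proof).

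Second, crossing compatibility: if $ij$ and $i'j'$ cross with $i<i'<j<j'$, then at time $i'-1$ the slots $\varphi(ij)$ are occupied by $H_i$, which must survive until $j$. The slots $\varphi(i'j')$ are occupied by $T_{i'}$, which is disjoint from $H_i$ because $T_{i'}$ is paired with $H_{j'}$, and removing elements of $H_i$ at $i'$ would force them to be re-added at $j'>j$, contradicting their removal at $j$. Hence the two slot sets are disjoint, and $\varphi\in\Hom(G_\pi,K(\ell,r))$.

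Conversely, given $\varphi$ we reconstruct the walk step by step. At an opening step $i$, $T_i$ is the set of current vertices occupying slots $\varphi(ij)$. $H_i$ is any $r$-set of brand-new vertices from $[n]\setminus[\ell]$ minus those already used, assigned to the slots in increasing order; each such choice of an ordered tuple of distinct new vertices corresponds to $r!$ orderings of the same set. At a closing step $j$, the move $e_j=e_i^\ast$ is forced. We then verify that the resulting sequence is a valid defect-$0$ even walk with pairing $\pi$; crossing-disjointness is exactly what guarantees that $T_j=H_i\subseteq U_{j-1}$ still holds. New vertices are chosen sequentially from $n-\ell$ available vertices, giving $(n-\ell)(n-\ell-1)\cdots(n-\ell-\tfrac{rp}2+1)$ ordered choices. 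Dividing by $(r!)^{p/2}$ for unordered heads gives the stated count.

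The main obstacle is the first step: showing that slot labels are preserved between the opening and closing of a pair, so that the map is well defined and inverse to the reconstruction. We plan to handle it by the same peeling induction used in Lemma~\ref{lem:zeroGenusReverse}: remove an innermost closing pair, which occurs consecutively in slot terms, and check that $\Phi$ is unchanged for the remaining steps.
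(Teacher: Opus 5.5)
Your proposal follows the paper's proof: the maps $\Phi_t=\phi_1\circ\cdots\circ\phi_t$ are exactly the cost-$0$, excess-$0$ transport of Lemma~\ref{lem:transportZero}, checked by peeling off the pair with the smallest closing index. The walk is then recovered from $\varphi$ together with the disjoint opening heads, which gives the falling factorial divided by $(r!)^{p/2}$. One caution: your standalone crossing argument claims the slots $\varphi(ij)$ still hold $H_i$ at time $i'-1$, and this fails when some pair nested inside $(i,j)$ is open at time $i'$ and uses those slots; so run that argument only for the innermost-closing pair inside the induction, as the paper does.
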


\begin{lemma}\label{lem:transportZero}
    For any defect-$0$ even Kikuchi walk $\bd e$,
    $(\bd e,\pi(\bd e))$ has a Kikuchi transport of excess size $0$ and cost $0$.  
\end{lemma}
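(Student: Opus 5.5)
We construct $\bd\rho$ by the canonical bijections themselves, so the cost is zero by design. Set $\rho_0=\mathrm{id}_{[\ell]}$ and, for $t\ge 1$, define $\rho_t \defeq \rho_{t-1}\circ\phi_t : U_t\to[\ell]$. Each $\phi_t$ is a bijection $U_t\to U_{t-1}$, so by induction each $\rho_t$ is a bijection $U_t\to[\ell]$. In particular it is injective into $[\ell+0]$, and no switch ever occurs. What remains is to check the two compatibility conditions of Definition~\ref{def:transport} for $\pi=\pi(\bd e)$. A useful reformulation is to think of $\rho_t(u)$ as the ``slot'' occupied by $u$. Under the canonical bijection, a vertex of $U_{t-1}\setminus T_t$ keeps its slot, and the vertices of $H_t$ take over the slots vacated by $T_t$.

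For pairing compatibility, take $ij\in E(\pi)$. By Lemma~\ref{lem:zeroGenusReverse}, $e_j=e_i^\ast$, so $T_j=H_i$. I will argue, exactly as in the proof of Lemma~\ref{lem:zeroGenusReverse}, that the vertices of $H_i$ are never removed strictly between steps $i$ and $j$. Suppose some step $k$ with $i<k<j$ removed a vertex $v\in H_i$. Then $v\in T_k\subseteq\supp(e_k)$. Step $k$ cannot be a first occurrence, because $e_k$'s partner step would then satisfy $T=H_k$, and the support containing $v$ would have to be paired with a step touching $v$. The cleaner route is induction on $p$ as in Lemma~\ref{lem:zeroGenusReverse}: remove the innermost pair $(i,j)$ with $j$ the smallest closing index and verify the claim for it. Between $i$ and $j$ all steps are first occurrences introducing only fresh vertices, so they neither remove $H_i$ nor interfere with it.

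Since slots are preserved for vertices not removed, $\rho_{j-1}(H_i)=\rho_i(H_i)$. By the definition of $\phi_i$, $\rho_i(H_i)=\rho_{i-1}(T_i)$, which gives $\rho_{j-1}(T_j)=\rho_{i-1}(T_i)$. To make the induction work, I must check that deleting the steps $i,j$ from the walk leaves the slot assignment of all other steps unchanged. After step $j$, the vertices of $T_i$ are back in $U_j$, in the same slots they occupied before step $i$. Indeed, $\phi_j$ maps $H_j=T_i$ onto $T_j=H_i$ in increasing order, and $\phi_i$ maps $H_i$ onto $T_i$ in increasing order, so the composite returns each vertex of $T_i$ to its own slot. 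The shortened walk $(e_1,\dots,\widehat{e_i},\dots,\widehat{e_j},\dots,e_p)$ is defect-$0$ by Lemma~\ref{lem:genusZero}, and its canonical transport agrees with the original on all remaining steps. The steps strictly between $i$ and $j$ involve only vertices outside $T_i\cup H_i$ and are thus unaffected.

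For crossing compatibility, take crossing pairs $ij$ and $i'j'$ with $i<i'<j<j'$. On the interval $(i,j)$ the slots $\rho_{i-1}(T_i)$ are occupied by $H_i$ throughout, by the persistence argument above. At step $i'$, which lies in that interval, the removed set $T_{i'}$ must be disjoint from $H_i$. Otherwise some vertex of $H_i$ would be removed before step $j$, contradicting the persistence established above (persistence is proved for all pairs, not just innermost ones, via the reduction). Hence $\rho_{i'-1}(T_{i'})$ avoids the slots of $H_i$, namely $\rho_{i-1}(T_i)$. The main obstacle is establishing persistence for non-innermost pairs. I expect to handle it through the induction, by showing that the removal operation preserves both the $\rho$ values and the pair structure, so that every pair eventually becomes innermost with its slot labels intact.
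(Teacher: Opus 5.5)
Your approach is the paper's: the canonical transport $\rho_t=\rho_{t-1}\circ\phi_t$ and induction on $p$ by deleting the pair $ij$ with the smallest closing index. Your check that deleting steps $i,j$ leaves the labels of all surviving steps unchanged (because $\phi_j|_{H_j}=\phi_i^{-1}|_{T_i}$) fills in a detail the paper leaves implicit.

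The gap is in your crossing step. You claim that for every pair $ij$, not only the innermost one, the slots $\rho_{i-1}(T_i)$ are occupied by $H_i$ throughout $(i,j)$. In the original walk this is false. Take $\ell=2$, $r=1$, $U_0=\{1,2\}$ and $(H_t,T_t)=(\{3\},\{1\}),(\{4\},\{3\}),(\{3\},\{4\}),(\{1\},\{3\})$. This is a defect-$0$ walk with $\pi(\bd e)=\{14,23\}$. Yet $3\in H_1$ is removed at step $2$, and slot $1=\rho_0(T_1)$ is held by the vertex $4$ at time $2$: a nested pair can temporarily evict vertices of $H_i$. So the ``main obstacle'' you name, persistence for non-innermost pairs, cannot be established, because it is not true.

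The fix is to stop trying to prove persistence globally. Instead, verify crossing compatibility for $ij$ inside the induction step, at the moment $j$ is the smallest closing index; this is what the paper does. Pairs are deleted in increasing order of closing index, so any $i'j'$ with $i<i'<j<j'$ is still present at that moment. Then $i'$ is one of the intermediate steps, all of which are openings with fresh heads, so $T_{i'}\cap H_i=\varnothing$ by the innermost persistence argument. Injectivity of $\rho_{i'-1}$, together with $\rho_{i'-1}(H_i)=\rho_i(H_i)=\rho_{i-1}(T_i)$, then gives $\rho_{i'-1}(T_{i'})\cap\rho_{i-1}(T_i)=\varnothing$.

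Your deletion computation shows that neither the hyperedges nor the tail labels $\rho_{t-1}(T_t)$ of surviving steps change under deletion. Hence conditions checked in the reduced walk are exactly the conditions in the original walk. With this reorganization your proof coincides with the paper's. Also drop the aborted sentence asserting that an intermediate step $k$ cannot be a first occurrence: for the innermost pair, every intermediate step is one.
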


\begin{proof}[Proof of Lemma~\ref{lem:countingGenusZero} from Lemma~\ref{lem:transportZero}]
    The map~\eqref{eq:transport_homomorphism} induced by the transport is a graph homomorphism $G_\pi \to K(\ell,r)$. Moreover, at
    each opening index, the head
    consists of $r$ new vertices.
    Thus, the $p/2$ opening heads
    are pairwise disjoint subsets
    of $[n]\setminus [\ell]$ of size
    $r$; the number of such objects
    is
    \[
        \binom {n-\ell} r\cdot \binom {n-\ell-r} r\cdots \binom {n-\ell-r(\frac p2-1)} r = \frac {(n-\ell)\cdots (n-\ell-\frac {rp} 2+1)} {(r!)^{\frac p 2}}\,.
    \]
    Conversely, a graph homomorphism together with the disjoint sets assigned to the opening heads uniquely determines the walk.
    Indeed, we can
    reconstruct the walk and a
    cost-$0$ transport of it simultaneously from left to right. Suppose that we have
    reconstructed $\rho_1,\ldots,\rho_{i-1}$ and $U_1, \ldots, U_{i-1}$. If $i$ is a closing
    index, then the head and tail
    of $e_i$
    are determined from the matching opening by the
    mirroring property of Lemma~\ref{lem:zeroGenusReverse}.
    If $i$ is an opening index with $ij\in E(\pi)$, then $H_i$ is prescribed by the opening
    head data, and $T_i = \rho^{-1}_{i-1}(\varphi(ij))$. This determines $U_i$, and 
    the cost-$0$ condition then uniquely determines $\rho_{i}$ from $\rho_{i-1}$.
\end{proof}

\begin{proof}[Proof of Lemma~\ref{lem:transportZero}]
    Define recursively $\rho_0(u) = u$, $\rho_t(u) = \rho_{t-1}(\phi_t(u))$. Then $\rho_t$ is automatically injective, has image $[\ell]$, and incurs $0$ switches. 
    
    We show by induction on $p$ that $\bd \rho$ is a
     Kikuchi transport. 
    Let $ij\in E(\pi)$ be such that
    $j$ is the smallest closing index. Then every
    index in $\{i+1,\ldots,j-1\}$
    is an opening index. None of these indices can remove a vertex of $H_i=T_j$: indeed, once removed, such a vertex could not be reintroduced before time $j$, since every intermediate head consists only of new vertices (which would
    contradict $T_j\subseteq U_{j-1}$). Therefore, the labels of $H_i$ remain
    unchanged through the canonical bijections, and $\rho_{j-1}(T_j) = \rho_i(H_i) = \rho_{i-1}(T_i)$.
    Next, suppose that $\{ij,i'j'\}\in E(G_\pi)$ (note that $i'>i$ by minimality of $j$). By the above
    argument, we have $T_{i'}\cap H_i=\varnothing$ and $\rho_{i-1}(T_i)=\rho_{i}(H_i) = \rho_{i'-1}(H_i)$, so by injectivity of $\rho_{i'-1}$, we get $\rho_{i'-1}(T_{i'})\cap \rho_{i-1}(T_i) = \varnothing$. This shows that
    the conditions in Definition~\ref{def:transport} hold
    for $ij$; removing these indices
    from the walk and the pairing
    yields a defect-$0$ walk of
    length $p-2$, which implies
    the statement by induction.
\end{proof}

Lemma~\ref{lem:genusZero} implies that defect-$0$ walks have exactly one pairing respecting
their support. We robustify this bound as
follows:

\begin{lemma}\label{lem:boundPairingGenus}
    For any even Kikuchi walk $\bd e$ of length $p$ and defect
    $\Delta$, $|\calP(\supp(\bd e))|\le p^{\Delta/r}$.
\end{lemma}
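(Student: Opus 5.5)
Let $\bd e$ be an even Kikuchi walk of length $p$ and defect $\Delta$. Write $s$ for the number of distinct supports in $\supp(\bd e)$, with multiplicities $m_1,\ldots,m_s$, each even and $\ge 2$, and $\sum_k m_k=p$. The first step is to relate $\Delta$ to these data. Only first occurrences of a support can introduce new vertices, and each introduces at most $r$. Hence
\[
    \sum_{i}|R_i| \le rs - \sum_{k=1}^s (r - |R_{i_k}|),
\]
where $i_k$ is the first occurrence of support $k$. Therefore
\[
    \Delta = \frac{rp}2-\sum_i |R_i| \;\ge\; r\left(\frac p2 - s\right) = \frac r2\sum_{k}(m_k-2),
\]
so $\sum_k (m_k-2)\le 2\Delta/r$.

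The second step is to count the pairings. A pairing in $\calP(\supp(\bd e))$ only pairs positions sharing a support, so it factors as a product of perfect matchings within each support class:
\[
    |\calP(\supp(\bd e))| = \prod_{k=1}^s (m_k-1)!!.
\]
For $m\ge 2$ even we bound $(m-1)!!\le m^{(m-2)/2}$: the product $(m-1)(m-3)\cdots 3\cdot 1$ has $m/2$ factors, the last of which is $1$, so at most $(m-2)/2$ factors exceed $1$, each at most $m$. Using $m_k\le p$ gives
\[
    |\calP(\supp(\bd e))| \le \prod_k p^{(m_k-2)/2} = p^{\frac12\sum_k(m_k-2)} \le p^{\Delta/r}.
\]

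This is a short argument and no step is a serious obstacle. The only point needing care is the first inequality: only first occurrences contribute to $\sum_i|R_i|$. A later occurrence of an already-seen support has its head inside previously seen vertices, which is exactly the observation already used in the paper when the defect was defined. With that in hand, the slack $r(p/2-s)$ is absorbed into $\Delta$, and we may simply drop the additional nonnegative slack $\sum_k(r-|R_{i_k}|)$.
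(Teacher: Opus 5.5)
Your proof is correct and takes the same approach as the paper. Both arguments use the fact that only first occurrences of a support introduce new vertices to get $p/2 - s \le \Delta/r$. Both then factor the pairing count as $\prod_k (m_k-1)!!$ and bound each factor by $m_k^{m_k/2-1}\le p^{(m_k-2)/2}$.
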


\begin{proof}
    Let $m$ be the number of distinct supports in $\supp(\bd e)$. Since each directed hyperedge can introduce at most $r$ new vertices and there are $rp/2-\Delta$ new vertices in total, we have
    $p/2-m\le \Delta/r$.
    Furthermore,
    if the supports have multiplicities $\beta_1, \ldots, \beta_m$,
    \[
        |\calP(\supp(\bd e))| = \prod_{j=1}^m (\beta_j-1)!!\le \prod_{j=1}^m \beta_j^{\beta_j/2-1}\le p^{p/2-m}\le p^{\Delta/r}\,,
    \]
    which concludes the proof.
\end{proof}

\newcommand\weight{\textnormal{wt}}

Let $\bd Z = (Z_{1}, \ldots, Z_{k})$ be a tuple of independent generalized Kikuchi matrices, where we drop the dependence on $n$ to lighten notation.
For $i\in [k]$, we denote by $(g_{i,e})_{e\in \calE_{n,r}}$
the Gaussian vector associated to $Z_{i}$.
Given a Kikuchi walk $\bd e$ of length $p$ and $\bd w\in [k]^p$,
define the weight
\begin{equation}
    \weight_{\bd Z}(\bd w,\bd e) \defeq \E \left[\prod_{a=1}^p g_{w_a,e_a}\right] = \sum_{\pi \in \calP(\bd w)} \prod_{ab\in E(\pi)} \E \left[g_{w_a,e_a} g_{w_b,e_b}\right]
    \,,\label{eq:defWeight}
\end{equation}
where the identity follows from Wick's formula (Lemma~\ref{lem:wick}) and independence of the matrices within the tuple. 
We make several simple observations about this definition of weight.
\begin{enumerate}
    \item Only pairings $\pi\in \calP(\supp(\bd e))$ can have
    nonzero contribution to the sum on the right-hand side of~\eqref{eq:defWeight}.
    In particular, only even Kikuchi walks can have nonzero weight.
    \item Since the Gaussians have variance
    $1$, Cauchy-Schwarz implies that every 
    term in the sum has magnitude at most $1$. It then follows from Lemma~\ref{lem:boundPairingGenus} that the weight
    of any defect-$\Delta$, length-$p$ walk is bounded by $|\calP(\supp(\bd e))|\le p^{\Delta/r}$.
    \item By Lemma~\ref{lem:genusZero}, if
    $\bd e$ has defect $0$, then $\calP(\supp(\bd e)) = \{\pi(\bd e)\}$, so
    combining with the first observation and Lemma~\ref{lem:zeroGenusReverse}, we have $\weight_{\bd Z}(\bd w,\bd e) = 1$ if $\pi(\bd e)\in \calP(\bd w)$, and $0$ otherwise.
\end{enumerate}

\begin{proof}[Proof of Theorem~\ref{thm:weakConvergence}]
    Our definition of a Kikuchi walk 
    assumes that its initial set is $[\ell]$. To reduce to this case, 
    for each $U\in\binom{[n]}{\ell}$, 
    fix an arbitrary permutation $\sigma_U$ of $[n]$ satisfying $\sigma_U([\ell])=U$, and let
    $\bd Z(U)=(Z_1(U),\ldots,Z_k(U))$     
    be the tuple obtained by 
    simultaneously permuting the 
    rows and columns of every $Z_i$ according to $\sigma_U$. Then $\bd Z(U)$ is again 
    a tuple of independent generalized Kikuchi matrices.

    For every
    $\bd w\in [k]^p$, we have
    \begin{equation}
        \E \tr \left(Z_{w_1}\cdots Z_{w_p}\right) = {\binom n \ell}^{-1} \sum_{U\in \binom {[n]} \ell} {d_n^{-p/2}} \sum_{\bd e\in \calW_p} \weight_{\bd Z(U)}(\bd w,\bd e)\,.\label{eq:traceExpansion}
    \end{equation}
    By the first observation above, only even Kikuchi walks contribute to~\eqref{eq:traceExpansion}. Hence,
    when $p$ is odd,
    \[
        \E \tr \left(Z_{w_1} \cdots Z_{w_p}\right) = 0 = \tau(\Sigma_{w_1}\cdots \Sigma_{w_p})\,,
    \]
    where the second equality follows
    from Lemma~\ref{lem:momentsGamma}.
    Suppose from now on that $p$ is even.
    
    An even Kikuchi walk of defect $\Delta$
    introduces $rp/2-\Delta$ vertices,
    so there are at most $O_{p,\ell,r}(n^{rp/2-\Delta})$ such walks. Since
    $|\weight_{\bd Z(U)}(\bd w, \bd e)|\le p^{\Delta/r}$ by the second observation and $d_n^{p/2} = \Omega_{p,\ell,r}(n^{rp/2})$, the total contribution of positive-defect walks converges to $0$, uniformly in $U$. 
    By the third observation, defect-$0$
    walks satisfy
    $\weight_{\bd Z(U)}(\bd w,\bd e) = \one\{\pi(\bd e)\in \calP(\bd w)\}$. Therefore,
    \begin{align*}
        \E \tr \left(Z_{w_1}\cdots Z_{w_p}\right) &= {d_n^{-p/2}} \sum_{\pi \in \calP(\bd w)} \left|\left\{\bd e\in \calW_p : \bd e\textnormal{ has defect $0$ and } \pi(\bd e) = \pi\right\}\right| + o(1)\\
        &= \frac {(n-\ell)\cdots (n-\ell-\frac {rp} 2+1)} {(r!)^{p/2} d_n^{p/2}} \sum_{\pi\in \calP(\bd w)} |\Hom(G_{\pi},K(\ell,r))| +o(1)\tag*{(Lemma~\ref{lem:countingGenusZero})}\\
        &\underset{n\to\infty}\longrightarrow {\binom \ell r}^{-p/2} \sum_{\pi\in \calP(\bd w)} |\Hom(G_{\pi},K(\ell,r))| = \tau\left(\Sigma_{w_1}\cdots \Sigma_{w_p}\right)\,.\tag*{(Lemma~\ref{lem:momentsGamma})}
    \end{align*}
    This proves convergence of the trace
    moments in expectation. It remains to show that
    \[
        \Var \tr\left(Z_{w_1}\cdots Z_{w_p}\right)\underset{n\to\infty}\longrightarrow 0\,.
    \]
    Expanding
    the expected square modulus by Wick's formula
    (as in~\eqref{eq:traceExpansion}), the
    terms corresponding to pairings with no
    edge between the two copies of the 
    length-$p$ sequence cancel with the squared modulus of the expectation. Every
    remaining pairing contains an edge between the two copies, meaning that
    any pair of Kikuchi walks 
    compatible with this
    pairing must share a vertex in the support.
    Such a pair of walks introduces at most
    $2\ell+rp-1$ vertices. 
    Hence, on the one hand the number
    of contributing pairs of walks is
    at most $O_{p,\ell,r}(n^{2\ell+rp-1})$.
    On the other hand, the normalization is
    \[{\binom n \ell}^{2} d_n^{p} = \Omega_{p,\ell,r}(n^{2\ell+rp})\,.\] Since every
    term has magnitude at most $1$ by the
    same argument as above, this implies that
    the variance is $O(1/n)$. Chebyshev's
    inequality therefore upgrades convergence
    in expectation to convergence in probability.
\end{proof}

\section{Norm Bound for Kikuchi Matrices}
\label{sec:norm}

In this section, we prove the following bound on the norm of the Kikuchi matrix, which is
tight up to a factor $O(\sqrt r)$.
Recall that the Kikuchi matrix $Y_n=Y_n(\ell,r)$ is defined in~\eqref{eq:KikuchiYSTconstruction}.

\begin{theorem}\label{thm:norm}
    There exist constants $c,C>0$ such that
    the following holds.
    For any integers $p,\ell,r,n\ge 1$ with $r\le \ell$,
    $n\ge \ell+r$, and $p\le c\min\{(n-\ell)/r^2,\sqrt{\ell(n-\ell)}/r\}$,
    \[
        \left(\E \tr Y_n^{2p}\right)^{\frac 1 {2p}}\le C \sqrt {\frac {\ell} r}\,.
    \]
    In particular, $\|Y_n\| \le C\sqrt{\ell/r}$ with high probability when $\ell\le c n/(r^2 \log^2 n)$.
\end{theorem}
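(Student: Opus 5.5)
We bound the trace moment through the expansion~\eqref{eq:exactExpansionKikuchi}, counting all pairs $(\bd e,\pi)$ with $\bd e\in\calW_{2p}$ and $\pi\in\calP(\supp(\bd e))$, now with $p$ growing. The high-probability statement then follows from Markov's inequality: take $p\asymp\log n$, note that $\ell\le cn/(r^2\log^2 n)$ makes both constraints on $p$ hold, and use $\|Y_n\|^{2p}\le \binom n\ell\tr Y_n^{2p}$ with $\binom n\ell^{1/(2p)}\le e^{O(\ell\log n/p)}$. This last factor is too large unless $p\gtrsim \ell\log n$. So we take $p\asymp \ell\log n$ instead, or rather the largest allowed value $p\asymp \min\{(n-\ell)/r^2,\sqrt{\ell n}/r\}$, which is $\gtrsim \ell\log n$ exactly when $\ell\lesssim n/(r^2\log^2 n)$. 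This explains the hypothesis.

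For the moment bound, we encode each paired walk step by step, reading left to right. At an opening step (first occurrence of a support in $\pi$) we record the tail $T_i\subseteq U_{i-1}$, at most $\binom\ell r$ choices, and the head. The head consists of $r-s_i$ new vertices and $s_i$ old ones, where $\sum_i s_i$ is the defect $\Delta$. New vertices contribute at most $(n-\ell)^{r-s_i}/(r-s_i)!$, and old vertices are chosen among at most $\ell+rp$ previously seen vertices. At a closing step we point to its partner among the currently open pairs, and the hyperedge is then essentially determined: its support is fixed, and the choice of head and tail within it costs at most $2^{2r}$, less in the defect-free case, where Lemma~\ref{lem:zeroGenusReverse} forces $e_j=e_i^\ast$. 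Comparing with $d_n^{p}=\binom\ell r^p\binom{n-\ell}r^p$, the defect-$0$ terms give $\binom\ell r^{-p}$ times the number of ways to interleave the closings. The key point, as in the Kneser picture, is that the tail of a closing step must be the \emph{current} location of a previously introduced head, which has size $r$ inside the current set of size $\ell$. So at any time at most $\ell/r$ opened pairs can be closed, and closings cost at most a factor $\min\{\#\text{open},\ell/r\}$. This is the source of the $\sqrt{\ell/r}$ factor rather than a free Catalan bound. The weights are controlled by Lemma~\ref{lem:boundPairingGenus}.

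The main obstacle is the closing-step count, which needs a careful charging. Ideally we want to bound the number of closings of a dyck-like sequence with $p$ openings in which each closing chooses among at most $\ell/r$ candidates by $(C\ell/r)^{p}$ after dividing by $\binom\ell r^{p}$ times the tail choices. Naively, $\binom\ell r$ tail choices at each opening cancel the normalization exactly, and closings give $\prod \min(\#\text{open},\ell/r)\le (\ell/r)^p$. The sequence of open/close patterns contributes $4^p$, giving $(C\ell/r)^{p}$ and hence $(C\sqrt{\ell/r})$ after the $2p$-th root. The subtlety is justifying the claim ``at most $\ell/r$ candidates'' when heads partially overlap or get moved. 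For defect $0$ this follows from the disjointness argument in the proof of Lemma~\ref{lem:transportZero}, since the currently closable heads are pairwise disjoint subsets of $U_{t}$. For positive defect I would charge each old vertex in a head a factor $(\ell+rp)\cdot p\cdot 2^{O(1)}/(n-\ell)$ and show that the total is $\sum_\Delta (Cpr(\ell+rp)/(n-\ell))^{\Delta}$ up to the relabelling ambiguity. This series is a geometric series bounded by $2$ exactly when $p\le c(n-\ell)/r^2$ and $p\le c\sqrt{\ell(n-\ell)}/r$; I would expect these two terms to produce precisely the two constraints in the statement. Getting this defect charging to interact correctly with the "$\ell/r$ closable heads" bound, since a defect step can break the disjointness, is where I expect the real work, and I would first try to absorb each broken head into the defect budget.
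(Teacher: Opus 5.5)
Your overall architecture matches the paper's: a left-to-right encoding of $(\bd e,\pi)$, the $\binom{\ell}{r}$ tail choices at openings cancelling the normalization, at most $\ell/r$ choices per closing, a geometric series in the defect, and Markov with $p\asymp\ell\log n$ (which correctly explains the hypothesis on $\ell$). However, the step you flag as ``the real work'' is the actual content of the proof, and you leave it unresolved.

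What is missing is a robust substitute for ``at most $\ell/r$ closable pairs'' when $\Delta>0$. The paper calls a closing $j$ with partner $i$ \emph{good} if $T_j$ avoids $\supp(e_k)$ for every other open $k\in O_j$, and \emph{bad} otherwise. It then shows three things.
(1) At most $\Delta$ closings are bad. A vertex $u$ can witness badness at most $\delta(u)$ times, where $\delta(u)$ is the number of times $u$ re-enters an opening head as an already-seen vertex.
(2) A good closing chooses its partner among openings whose current traces $\supp(e_i)\cap U_{t-1}$ are pairwise disjoint $r$-subsets of $U_{t-1}$. Hence it has at most $\ell/r$ candidates.
(3) Any closing chooses among openings with $|\supp(e_i)\cap U_{t-1}|=r$. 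Each $u\in U_{t-1}$ lies in at most $2\delta(u)+\one\{u\notin U_0\}$ open supports, so $\sum_{i\in O_t}|\supp(e_i)\cap U_{t-1}|\le \ell+2\Delta$. This gives at most $\frac{\ell}{r}(1+\frac{2\Delta}{\ell})$ candidates.
Recording which positions are openings, good closings and bad closings ($3^{2p}$ ways) then gives a total closing factor $(\ell/r)^p(1+2p/\ell)^{\Delta}$. Your idea of absorbing broken heads into the defect budget corresponds to (1). Without something like (3), though, a bad closing has no adequate bound: charging it the number of open pairs ($\le p$) only yields the range $\ell\lesssim n/(r^3\log^2 n)$.

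Your bookkeeping also has two errors that matter quantitatively.

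First, once the partner $i$ of a closing $t$ is chosen, the hyperedge is forced at any defect: $T_t=\supp(e_i)\cap U_{t-1}$ and $H_t=\supp(e_i)\setminus U_{t-1}$. Paying $2^{2r}$ per closing would inflate the final bound by a factor $2^{r}$.

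Second, an already-seen vertex in an opening head $H_i$ lies outside $U_{i-1}$. So it was removed earlier and belongs to a previous tail, which leaves at most $2pr$ candidates, not $\ell+rp$. With this correction, the normalized cost of an opening with $\delta$ coincidences is $\binom{2pr}{\delta}\binom{n-\ell}{r-\delta}/\binom{n-\ell}{r}\le (2pr^2/(n-\ell-r+1))^{\delta}$. Combined with (3), the ratio of the series in $\Delta$ is of order $\frac{pr^2}{n-\ell}(1+\frac{p}{\ell})$, which is small precisely when $p\le c\min\{(n-\ell)/r^2,\sqrt{\ell(n-\ell)}/r\}$.

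By contrast, your ratio $Cpr(\ell+rp)/(n-\ell)$ is small only for $p\lesssim\min\{(n-\ell)/(r\ell),\sqrt{n-\ell}/r\}$. It does not reproduce the stated constraints as you assert. With $p\asymp\ell\log n$ it would give the high-probability bound only for $\ell\lesssim\sqrt{n}/(r\log n)$.
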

Among all generalized Kikuchi matrices (Definition~\ref{def:generalizedKikuchi}),
the even trace moments are maximized for the
undirected Kikuchi matrices, so Theorem~\ref{thm:norm} directly generalizes
to arbitrary generalized Kikuchi matrices
(and in particular to the directed Kikuchi
matrices $X_n$).

The proof of Theorem~\ref{thm:norm} reduces to counting the number of objects appearing on the
right-hand side of~\eqref{eq:exactExpansionKikuchi}. The
crude counting of higher-defect walks
in Section~\ref{sec:weak} no longer
suffices when $p$ is growing with $n$.

Fix a tuple $(\bd e,\pi)$
such that $\bd e\in \calW_{2p}$ and $\pi\in \calP(\supp(\bd e))$. Define
\begin{equation}
    O_t \defeq \{i<t : \exists j\ge t,\; ij\in E(\pi)\}\label{eq:defOpened}
\end{equation}
to be the set of opening indices whose pair
is open immediately before index $t$.
Call a closing index $j$ with $ij\in E(\pi)$ {\em good} if $T_j\cap
    \bigcup_{k\in O_j\setminus\{i\}}\supp(e_k)=\varnothing$, and {\em bad} otherwise.
We call a pair $(i,u)$ where $i$ is an opening index and  $u\in H_i\cap (\bigcup_{j<i} U_j)$
a {\em coincidence} of the walk.
For a vertex $u$, we denote by $\delta(u)$ the number of times $u$ appears as
a coincidence. Note that $\sum_u \delta(u)=\Delta$.

\begin{lemma}\label{lem:boundBadVertices}
    If $\bd e$ has defect $\Delta$,
    then at most $\Delta$
    of the closing indices are bad.
\end{lemma}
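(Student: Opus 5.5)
We will charge each bad closing index injectively to a distinct coincidence, i.e.\ a pair $(i,u)$ with $i$ an opening index and $u\in H_i$ an old vertex. Since the total number of coincidences is $\sum_u\delta(u)=\Delta$, this gives the bound. Note first that in a walk with $\pi\in\calP(\supp(\bd e))$ only opening indices can introduce new vertices: a closing index $j$ with $ij\in E(\pi)$ has $\supp(e_j)=\supp(e_i)$, and $\supp(e_i)$ has already been seen. Hence every vertex lost by the defect is accounted for by a coincidence at some opening index.

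Fix a bad closing index $j$ with partner $i$. Pick a witness $u\in T_j\cap\supp(e_k)$ for some $k\in O_j\setminus\{i\}$, so $k<j$ is an opening index whose partner $j'$ satisfies $j'>j$. We distinguish two cases.

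\begin{itemize}
\item Suppose $u$ was already present before time $k$, i.e.\ $u\in\bigcup_{t<k}U_t$. If $u\in H_k$, then $(k,u)$ is a coincidence. If $u\in T_k$, then $u$ leaves at time $k$ and must be reintroduced before time $j$ (since $u\in T_j\subseteq U_{j-1}$). Take the first time $s\in(k,j)$ with $u\in H_s$. This $s$ cannot be a closing index whose partner was opened after $k$ in a way introducing $u$ freshly; in any case $u$ is an old vertex at $s$. If $s$ is opening, $(s,u)$ is a coincidence. If $s$ is closing, we trace back along the support of its partner, which also contains $u$, and iterate.
\item Suppose instead $u$ is new at $k$, i.e.\ $u\in R_k$. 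Then $u$ is still in $U_{j-1}$ and is removed at $j$, but $u\in\supp(e_j)=\supp(e_i)$ with $i\neq k$. Since $\supp(e_i)$ was determined either before $k$, if $i<k$, or after it, the vertex $u$ must appear as a non-new head vertex at some opening index. If $i>k$ and $u\in H_i$, then $(i,u)$ is a coincidence. If $u\in T_i$, then $u$ was removed at $i$ and reintroduced before $j$, which again produces an old-vertex head occurrence at an opening index.
\end{itemize}

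The main obstacle is making the assignment \emph{injective}. The plan is to define the charged coincidence canonically, for instance as the last coincidence involving the vertex $u$ at an opening index in the interval $(\,\cdot\,,j)$ tied to the pair $ij$. One then argues that two distinct bad closings $j_1<j_2$ cannot receive the same coincidence $(s,u)$: the occurrence of $u$ created at $s$ is consumed (removed) at the first of them, so the second would need a fresh reintroduction, which is a later coincidence.

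If exact injectivity proves delicate, a fallback is to charge per vertex. One shows that vertex $u$ can witness at most $\delta(u)$ bad closings, because between consecutive uses of $u$ as the removed tail of a bad closing, $u$ must be reintroduced as an old vertex at an opening index. Summing over $u$ then gives at most $\sum_u\delta(u)=\Delta$ bad closing indices.
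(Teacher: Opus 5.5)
Your fallback has the right overall shape. It is also how the paper argues: show that each vertex $u$ lies in the removed tail of at most $\delta(u)$ closings that are bad because of $u$, then sum over $u$. However, the mechanism you give for that bound is false, and your injectivity argument rests on the same false step. You claim that between two consecutive bad closings removing $u$, the vertex $u$ must be reintroduced at an opening index. In fact it can be reintroduced by a \emph{closing} head whose matching opening had $u$ in its tail.

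Here is a concrete counterexample. Take $r=\ell=1$ and $U_0=\{u\}$, with $a,c$ two other vertices. Consider the walk of length $8$ whose steps (tail $\to$ head) are
\[
u\to c,\quad c\to u,\quad u\to a,\quad a\to u,\quad u\to a,\quad a\to u,\quad u\to c,\quad c\to u,
\]
paired by $\{1,8\},\{2,7\},\{3,6\},\{4,5\}$. The only coincidences are $(2,u)$ and $(4,u)$, so $\Delta=2$. The bad closings are $5$ (the open pairs $1,2,3$ contain $u$) and $7$ (the open pair $1$ contains $u$), and both remove $u$. Between them, $u$ re-enters only at the closing index $6$. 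So there is no coincidence in $(5,7)$, and a rule such as ``the last coincidence at $u$ before $j$'' charges both bad closings to $(4,u)$.

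Your case analysis leaves exactly this situation unresolved. The phrases ``if $s$ is closing, we trace back \dots\ and iterate'' and ``which again produces an old-vertex head occurrence at an opening index'' are the unproved step. Moreover, even if the gap claim held, it would give only $m-1$ coincidences for $m$ bad removals of $u$, so a separate boundary argument would still be needed.

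The missing idea is a per-vertex balance identity, which avoids tracking reintroductions in time altogether. Let $a,b,c,d$ be the numbers of occurrences of $u$ in opening heads, closing heads, opening tails and closing tails. Because the walk returns to $U_0$, $a+b=c+d$. Because $\pi$ pairs indices with equal supports, $a+c=b+d$. Hence $a=d$: the closing tails containing $u$ are exactly as numerous as the opening heads containing $u$.

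\begin{itemize}
\item If $u\in U_0$, every opening head containing $u$ is a coincidence. So $u$ lies in at most $\delta(u)$ closing tails in total.
\item If $u\notin U_0$, then $a=\delta(u)+1$. The last occurrence of $u$ is a closing tail, since an opening would have a later partner whose support contains $u$. At that moment no other open pair contains $u$, so that closing is not bad because of $u$.
\end{itemize}

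Either way, $u$ witnesses at most $\delta(u)$ bad closings. Summing over $u$ gives at most $\sum_u\delta(u)=\Delta$ bad closings.
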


\begin{proof}
    For a vertex $u$, the number of opening heads containing $u$ equals the number of
    closing tails containing $u$.
    When $j$ is a closing index, let $B_j \defeq T_j\cap
    \bigcup_{k\in O_j\setminus\{i\}}\supp(e_k)$, where $i$ is the matching opening of $j$.
    If $u\in U_0$, then every
    occurrence of $u$ in an opening head is
    a coincidence, so $u$ belongs to at most $\delta(u)$ sets $B_j$. If $u\notin U_0$, then the number of opening heads containing $u$
    is $\delta(u)+1$. However, the
    last occurrence of $u$ is a
    closing tail and no other
    open pair at that point
    can contain $u$. Therefore,
    in each case we have $|\{j : u\in B_j\}|\le \delta(u)$. Summing over $u$ concludes the proof.
\end{proof}

We say that $i\in O_t$ is {\em admissible}
at time $t$ if $\left|\supp(e_i)\cap U_{t-1}\right|=r$. We say furthermore that it
is {\em good} if $\supp(e_i)\cap U_{t-1}\cap \supp(e_j)=\varnothing$ for all $j\in O_t\setminus \{i\}$. When constructing a
paired Kikuchi walk from left to right,
admissible openings are the candidates that
can be closed at time $t$.
Good admissible openings create a good closing in this way.

\begin{lemma}\label{lem:goodClosing}
    The number of good admissible openings at any time is at most $\frac \ell r$.
\end{lemma}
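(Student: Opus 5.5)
The plan is a disjointness-and-counting argument inside $U_{t-1}$, which has exactly $\ell$ elements. Fix a time $t$ and let $G\subseteq O_t$ be the set of good admissible openings at time $t$. For each $i\in G$, set
\[
    A_i \defeq \supp(e_i)\cap U_{t-1}\,.
\]
By admissibility, $|A_i| = r$. It then suffices to show that the sets $A_i$, $i\in G$, are pairwise disjoint subsets of $U_{t-1}$. Since $|U_{t-1}| = \ell$ (every $U_j$ has size $\ell$ by construction of Kikuchi walks), this gives $r|G|\le \ell$, which is the claim.

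Pairwise disjointness comes straight from the definition of a good admissible opening. For $i\neq j$ in $G$ we have $j\in O_t\setminus\{i\}$. Goodness of $i$ then says $\supp(e_i)\cap U_{t-1}\cap \supp(e_j)=\varnothing$. Since $A_j\subseteq\supp(e_j)$, this gives $A_i\cap A_j\subseteq \supp(e_i)\cap U_{t-1}\cap\supp(e_j)=\varnothing$.

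One point needs care: two distinct opening indices $i\neq j$ could in principle have the same support. In that case goodness of $i$ forces $A_i = \supp(e_i)\cap U_{t-1}\cap \supp(e_j)=\varnothing$, which contradicts $|A_i|=r\ge 1$. So distinct good admissible openings automatically have distinct, indeed disjoint, traces on $U_{t-1}$, and the counting $r|G| = \sum_{i\in G}|A_i| \le |U_{t-1}| = \ell$ goes through. There is no real obstacle here; the only thing to check is that the size-$\ell$ invariant of $U_{t-1}$ is used rather than the larger set of all vertices seen so far.
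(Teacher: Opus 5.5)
Your proof is correct and is the same argument as the paper's: for the good admissible openings, the sets $\supp(e_i)\cap U_{t-1}$ are pairwise disjoint $r$-element subsets of the $\ell$-element set $U_{t-1}$, so there are at most $\ell/r$ of them. Your separate remark about two openings with equal supports is harmless but not needed, since pairwise disjointness of the indexed family already gives $r|G|\le \ell$.
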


\begin{proof}
    The candidate tails from the open pairs form a collection
    of pairwise disjoint subsets of $U_{t-1}$ of size
    $r$. Such a collection has size at most $|U_{t-1}|/r=\ell/r$.
\end{proof}

\begin{lemma}\label{lem:boundDefault}
    The number of admissible openings at any time is at most $\frac \ell r\left(1+\frac{2\Delta}\ell\right)$.
\end{lemma}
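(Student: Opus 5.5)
The idea is to split the admissible openings at time $t$ into good and bad ones. Lemma~\ref{lem:goodClosing} bounds the good ones by $\ell/r$, so it remains to show that the number of bad admissible openings is at most $2\Delta/r$. Adding the two bounds gives $\frac{\ell}{r}+\frac{2\Delta}{r}=\frac{\ell}{r}\bigl(1+\frac{2\Delta}{\ell}\bigr)$.

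For each admissible $i\in O_t$, set $C_i\defeq \supp(e_i)\cap U_{t-1}$, which has size exactly $r$. Then $i$ is bad precisely when $C_i$ meets $\supp(e_j)$ for some other $j\in O_t$. I would charge each bad admissible opening to a vertex $u\in C_i$ lying in the support of at least two open pairs. It then suffices to prove two things:
\begin{itemize}
\item the number of open pairs whose support contains a given vertex $u\in U_{t-1}$ is at most $\delta(u)+1$;
\item every vertex $u$ that lies in two or more open supports has $\delta(u)\ge 1$.
\end{itemize}
With these, the number of bad admissible openings is at most $\sum_{u\,:\,\delta(u)\ge 1}(\delta(u)+1)\le 2\sum_u\delta(u)=2\Delta$. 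This already gives $\ell/r+2\Delta$, so the factor $1/r$ still has to be recovered.

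To recover it, I would count incidences rather than vertices. Each bad $i$ contributes at least one shared vertex, but this is too weak. The alternative is a disjointness argument: the family $\{C_i\}$ of admissible openings consists of $r$-subsets of $U_{t-1}$, a set of size $\ell$. Let $m_u$ be the number of admissible $i$ with $u\in C_i$. Then
\[
r\cdot\#\{\text{admissible}\}=\sum_{u\in U_{t-1}} m_u\le \ell+\sum_{u}(m_u-1)_+ .
\]
Combined with the bound $m_u\le\delta(u)+1$ from the first item, this gives $\sum_u(m_u-1)_+\le\Delta$. Dividing by $r$ yields $\#\{\text{admissible}\}\le \frac{\ell+\Delta}{r}$, which is even better than the claimed bound. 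The factor $2$ in the statement leaves slack for the boundary case $u\in U_0$ and for the vertex belonging to a pair's own head or tail.

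The key step, and the main obstacle, is justifying $m_u\le \delta(u)+1$, possibly weakened to $m_u\le 2\delta(u)+1$, which still suffices. I would follow the bookkeeping in the proof of Lemma~\ref{lem:boundBadVertices}. Each open pair with $u\in\supp(e_i)$ and $u\in U_{t-1}$ requires $u$ to be in $H_i$ or $T_i$ at an opening step $i<t$.
\begin{itemize}
\item \emph{Head appearances.} Every appearance of $u$ in an opening head beyond the first introduction of $u$ is a coincidence, and for $u\in U_0$ every such appearance is one.
\item \emph{Tail appearances.} When $u\in T_i$ at an opening step, $u$ has been removed, yet $u\in U_{t-1}$, so $u$ must have been reinserted before time $t$. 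That reinsertion happens through an opening head, which would be a coincidence, or through a closing head.
\end{itemize}
The delicate part is the closing-head case: I need to pair such reinsertions with earlier tail appearances carefully, so that each coincidence is charged to at most two of the open pairs containing $u$.
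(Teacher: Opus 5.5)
\textbf{There is a genuine gap: the per-vertex bound your argument rests on is never proved, and the sharper form you propose is false.}

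Your reduction is the right one, and it is essentially the paper's. Double counting incidences between $U_{t-1}$ and the sets $C_i$ gives $r\cdot\#\{\text{admissible}\}\le \ell+\sum_u (m_u-1)_+$. The lemma then follows from the per-vertex bound $m_u\le 2\delta(u)+1$. The paper does the same thing with $\Phi_t=\sum_{i\in O_t}|\supp(e_i)\cap U_{t-1}|$, bounding the number of \emph{all} open pairs through $u\in U_{t-1}$ by $2\delta(u)+\one\{u\notin U_0\}$. However, you stop at the ``delicate closing-head case'' without establishing that bound, so the proposal does not yet prove the lemma.

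The sharper bound $m_u\le\delta(u)+1$ you aim for first fails, and with it the claimed improvement $(\ell+\Delta)/r$; the factor $2$ is not slack. Take $\ell=2$, $r=1$, $U_0=\{1,2\}$, and
\[
e_1=(\{3\},\{1\}),\quad e_2=(\{4\},\{3\}),\quad e_3=(\{3\},\{2\}),\quad e_4=(\{5\},\{4\}),
\]
followed by $e_5=e_4^\ast$, $e_6=e_3^\ast$, $e_7=e_2^\ast$, $e_8=e_1^\ast$ with the nested pairing.
\begin{itemize}
\item Only the vertices $3,4,5$ are introduced, so $\Delta=1$. The single coincidence is vertex $3$ at step $3$.
\item At $t=5$ we have $U_4=\{3,5\}$, and each of the four open pairs meets $U_4$ in exactly one vertex.
\item Vertex $3$ lies in $C_1,C_2,C_3$, so $m_3=3>\delta(3)+1$.
\item There are $4=(\ell+2\Delta)/r>(\ell+\Delta)/r$ admissible openings, so the lemma is tight here.
\end{itemize}
The same example refutes the first bullet of your good/bad route, since vertex $3$ lies in three open supports while $\delta(3)=1$.

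The missing step needs no careful matching of reinsertions with earlier tails; a conservation count suffices. For $u\in U_{t-1}$, let $a,b,c,d$ count the steps before $t$ at which $u$ lies in an opening head, closing head, opening tail, and closing tail, respectively.
\begin{itemize}
\item Paired steps have equal supports, so the number of open pairs whose support contains $u$ is $(a+c)-(b+d)$.
\item Since $u\in U_{t-1}$, conservation gives $a+b-c-d=\one\{u\notin U_0\}$.
\item $a\le\delta(u)+\one\{u\notin U_0\}$, because every appearance of $u$ in an opening head other than its first introduction is a coincidence.
\end{itemize}
Eliminating $c$ gives $(a+c)-(b+d)=2(a-d)-\one\{u\notin U_0\}\le 2\delta(u)+\one\{u\notin U_0\}$. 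Hence $m_u\le 2\delta(u)+1$, and your summation then yields $\frac{\ell}{r}\bigl(1+\frac{2\Delta}{\ell}\bigr)$. Closing heads need no separate treatment; they are absorbed by the conservation identity.
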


\begin{proof}
    Let $\Phi_t \defeq \sum_{i\in O_t} \left|\supp(e_i)\cap U_{t-1}\right|$.
    On the one hand, an opening in $O_t$ can be closed
    at step $t$ only if its support intersects $U_{t-1}$ in $r$ elements, so the number of admissible choices is at most $\Phi_t/r$. 
    On the other hand, we claim that for $u\in U_{t-1}$, we have $|\{i\in O_t : u\in \supp(e_i)\}|\le 2\delta(u) + \one\{u\notin U_0\}$. Summing this inequality over $u$ yields
    $\Phi_t \le 2 \sum_{u\in U_{t-1}} \delta(u) + |U_{t-1}\setminus U_0|\le 2\Delta + \ell$, as desired.

    It remains to prove the above inequality. Let $a,b,c,d$ denote the number of occurrences of $u$ in an opening head, closing head, opening tail, and closing tail before time $t$.
    Since $u\in U_{t-1}$, we have
    $a + b - c - d = \one \{u\notin U_0\}$.
    Further, $a\le \delta(u) + \one\{u\notin U_0\}$. Hence, the number of
    openings in $O_t$ whose support contains $u$ is
    $a+c - b-d = 2(a - d) - \one \{u\notin U_0\}\le 2\delta(u)  + \one\{u\notin U_0\}$.
\end{proof}

\begin{proof}[Proof of Theorem~\ref{thm:norm}]
    We start by fixing globally the defect of the walk $\Delta\ge 0$; the 
    sequence $(\delta_1, \ldots, \delta_{p})$, where
    $\delta_i$ is the number of coincidences at the $i$-th opening index (at most
    $2^{p+\Delta}$ choices since $\sum_i \delta_i = \Delta$); and the positions of the opening indices and bad closings (at most $3^{2p}$ choices).
    Afterwards, we construct the pair $(\bd e,\pi)$ from
    left to right for $t=1,\ldots,2p$. Assume that we have chosen $\supp(e_1),\ldots,\supp(e_{t-1})$ and $\{ij\in E(\pi) : 1\le i<j< t\}$. Then there are two cases.
    \begin{enumerate}
        \item $t$ is an opening index:
        The tail vertices can be identified as a subset of $r$ elements from $U_{t-1}$.
        The coincidence vertices in the head are
        a subset of the
        $\le 2pr$ previous vertices that have 
        previously appeared in a tail in the walk.
        The non-coincidence vertices in the head are a subset
        of the $n-\ell$ vertices that have not yet appeared in the walk. 
    
        \item $t$ is a closing index: It suffices to match $t$ to an element of $O_t$. If
        $t$ is a good (resp. bad) closing, there are
        $\frac \ell r$ (resp. $\frac{\ell} r \left(1+\frac{2\Delta} \ell\right)$) admissible choices by Lemma~\ref{lem:goodClosing} (resp. Lemma~\ref{lem:boundDefault}).
    \end{enumerate}
    Overall, this counting applied to the right-hand side of~\eqref{eq:exactExpansionKikuchi} gives
    \[
        \E \tr Y_n^{2p}\le \left(\frac 9 {d_n}\right)^p \sum_{\Delta\ge 0} \left[\left(\frac \ell r\right)^{p} \left(1 + \frac {2\Delta} \ell\right)^{\min\{p,\Delta\}} \sum_{\delta_1+\ldots+\delta_p=\Delta} \prod_{j=1}^p \binom \ell r \binom {2pr} {\delta_j} \binom {n-\ell}{r-\delta_j}\right]\,.
    \]
    The contribution of each opening index normalized by $d_n$ can be upper bounded by
    \[
        \frac 1 {d_n} \binom \ell r \binom {2pr} {\delta_j} \binom {n-\ell} {r-\delta_j} = \binom {2pr} {\delta_j} \frac {\binom {n-\ell} {r-\delta_j}} {\binom {n-\ell} r} \le \left(\frac {2pr^2} {n-\ell-r+1}\right)^{\delta_j}\,.
    \]
    The total contribution of the closing indices
    can be upper bounded by 
    \[
        \left(\frac \ell r\right)^{p} \left(1 + \frac {2\Delta} \ell\right)^{\min\{p,\Delta\}}\le \left(\frac {\ell} r\right)^p\left(1 + \frac {2p} \ell\right)^{\Delta}\,.
    \]
    Putting everything together,
    \[
        \E \tr Y_n^{2p}\le \left(\frac {18\ell} r\right)^p \sum_{\Delta\ge 0} \left(\frac {4pr^2} {n-\ell-r+1}\right)^\Delta \left(1 + \frac {2p} \ell\right)^{\Delta}\le \left(\frac {C\ell} r\right)^p\,,
    \]
    where the last inequality uses the fact that the series in $\Delta$ converges when $p$ satisfies the assumption of Theorem~\ref{thm:norm}. The concentration
    statement then follows from Markov's inequality.
\end{proof}
\section{Norm Convergence for Directed Kikuchi Matrices}
\label{sec:strongConvergence}

In this section, we establish Theorem~\ref{thm:directedIntro}.
In the previous section, we expanded
the trace moments of the Kikuchi matrix $Y_n$ in~\eqref{eq:exactExpansionKikuchi}.
The goal was to count the number
of paired Kikuchi walks $(\bd e,\pi)$, where $\pi$ satisfies $\supp(e_i)=\supp(e_j)$ for every $ij\in E(\pi)$. 
For the directed
Kikuchi matrix $X_n$, Wick's formula
imposes a stronger ``mirroring'' property (Lemma~\ref{lem:traceMoments}).
A {\em paired directed Kikuchi walk} of length $p$ is a tuple
$(\bd e,\pi)$ such that $\bd e\in \calW_{p}$ and
 $e_i=e_j^\ast$ for all $ij\in E(\pi)$. Then by~\eqref{eq:exactExpansionDirected},
 the $p$-th moment of $X_n$ matches (up to an explicit normalization) the number of paired directed
Kikuchi walks of length $p$.

Recall that the proof of weak convergence 
relies on the fact that defect-$0$ walks 
have transports of 
excess size $0$ and cost $0$ (Lemma~\ref{lem:transportZero}). 
We now prove a generalization of this result.

\begin{lemma}\label{lem:existenceTransport}
    Any defect-$\Delta$ paired directed Kikuchi walk has a Kikuchi transport of excess size $\Delta$ and cost at most $3\Delta$.
\end{lemma}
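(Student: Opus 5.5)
We build the transport greedily from left to right, extending the cost-$0$ construction of Lemma~\ref{lem:transportZero}. Besides the labels $[\ell]$ we keep a reservoir of $\Delta$ extra labels $\{\ell+1,\ldots,\ell+\Delta\}$, so the excess size is $\Delta$. The default rule is $\rho_t(u)=\rho_{t-1}(\phi_t(u))$. We deviate from it (creating switches) only near the $\Delta$ coincidences, i.e.\ occurrences $(i,u)$ where an opening head $H_i$ contains a vertex $u$ seen earlier. We will charge at most three switches to each coincidence.

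\textbf{Step 1 (the pairing constraint).} First we check what the mirroring $e_i=e_j^\ast$ gives for $ij\in E(\pi)$. With the default rule, $\rho_i(H_i)=\rho_{i-1}(T_i)$. So pairing compatibility $\rho_{j-1}(T_j)=\rho_{i-1}(T_i)$ holds as long as the labels carried by the vertices of $H_i=T_j$ are not changed between times $i$ and $j$.

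In the defect-$0$ proof this was automatic: no intermediate step could remove a vertex of $H_i$ and reintroduce it, because all intermediate heads were new. In general it can fail for two reasons.
\begin{itemize}
\item[(a)] A vertex $u\in H_i$ is removed at some time $s\in(i,j)$ as part of an opening tail, and later reintroduced by an opening head. That reintroduction is a coincidence.
\item[(b)] It is removed and reintroduced within other matched pairs, whose mirrored structure brings it back.
\end{itemize}
In case (b) the mirroring should return $u$ to the same label automatically. In case (a) we perform a switch at the reintroduction time: we assign $u$ the label it had when it was removed, provided that label is currently free. If that label is occupied, we move the occupant to a reservoir label. This costs at most two switches per coincidence, plus one switch later to return the label to the pool when needed. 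This is where the bound $3\Delta$ comes from.

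\textbf{Step 2 (crossing compatibility).} For crossing pairs $ij$ and $i'j'$ with $i<i'<j<j'$, we need $\rho_{i'-1}(T_{i'})$ disjoint from $\rho_{i-1}(T_i)$. Since we maintain the invariant that the vertices of $H_i$ keep the labels $\rho_{i-1}(T_i)$ until time $j$, injectivity of $\rho_{i'-1}$ gives disjointness whenever $T_{i'}\cap H_i=\varnothing$.

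If $T_{i'}$ intersects $H_i$, then a vertex of $H_i$ is removed at time $i'$ and must reappear before time $j$. It cannot come back through a closing of an open pair without violating the nesting, since $i'$ closes only at $j'>j$. So it must come back through an opening head, which is a coincidence. At that coincidence we route the removed vertex through a reservoir label. Concretely, when $u$ is removed at time $i'$ we do not reuse its old label until a new coincidence forces it, so the label $\rho_{i-1}(T_i)$ stays reserved.

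We will formalize this as an invariant. At every time $t$, each open pair $ij\in O_t$ owns the label set $\rho_{i-1}(T_i)$, and labels owned by crossing-open pairs are pairwise disjoint. The invariant is restored after each step by at most $3\delta_t$ switches, where $\delta_t$ is the number of coincidences at step $t$. We then prove it by induction on $t$, splitting into opening steps (with and without coincidences) and closing steps.

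\textbf{Main obstacle.} The hardest part is the bookkeeping in Step~2: showing that each coincidence disturbs only $O(1)$ label assignments, rather than cascading through many open pairs. We would first attempt a direct charging argument in the spirit of Lemma~\ref{lem:boundBadVertices}. Each label conflict at a closing is traced back to a unique vertex $u$ and one of its $\delta(u)$ coincidences, and each conflict is resolved by one swap with a reservoir label together with its eventual return.

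If the cascade cannot be controlled this way, we would instead use induction on $\Delta$ by surgery. Take the first coincidence and rename the offending vertex to a brand-new vertex, which gives a walk of defect $\Delta-1$ that is still a paired directed walk. Apply the inductive hypothesis to it, then undo the renaming using one fresh reservoir label and at most three switches: one at the coincidence, one at the matching closing, and one at the point where the original vertex's label must be restored.
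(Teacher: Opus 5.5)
There is a genuine gap. You never fix a construction that reconciles pairing and crossing compatibility, and the mechanisms you do describe already fail on a small example.

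\textbf{A counterexample to your mechanisms.} Take $r=1$, distinct $x,z\in[\ell]$, distinct $u,y\notin[\ell]$, and the walk $e_1=(\{u\},\{x\})$, $e_2=(\{y\},\{u\})$, $e_3=(\{u\},\{z\})$, $e_4=e_1^\ast$, $e_5=e_2^\ast$, $e_6=e_3^\ast$, with $\pi=\{14,25,36\}$. This is a paired directed walk of defect $1$, its only coincidence is $(3,u)$, and all three pairs cross. In \emph{every} Kikuchi transport of it, the following switches are forced.
\begin{itemize}
\item Crossing of $14,25$ forces $\rho_1(u)\ne x=\rho_0(\phi_1(u))$.
\item Pairing of $14$ together with crossing of $14,36$ forces $\rho_3(u)=x\ne\rho_2(z)=\rho_2(\phi_3(u))$.
\item Pairing of $25$ and $36$ together with crossing of $25,36$ forces $\rho_5(u)=\rho_2(z)\ne\rho_1(u)=\rho_4(y)=\rho_4(\phi_5(u))$.
\end{itemize}
So $(1,u)$, $(3,u)$ and $(5,u)$ are all switches, and each one breaks part of your plan.
\begin{itemize}
\item The switch at time $1$ happens before $u$ is ever removed. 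This contradicts your invariant that the vertices of $H_i$ keep the labels $\rho_{i-1}(T_i)$ until time $j$. Crossing compatibility constrains $\rho_{i'-1}(T_{i'})$, the labels just \emph{before} the removal at $i'$. Rerouting through a reservoir label at the later reintroduction therefore comes too late.
\item If you drop that invariant, your rule (a) of restoring at reintroduction the label $u$ had when removed gives $\rho_3(u)=\rho_1(u)\ne x$. This violates pairing compatibility of $14$.
\item The switch at time $5$ refutes your case (b). The mirroring of $25$ returns $u$ with pair $25$'s label, not the label pair $36$ needs.
\end{itemize}
Your Step~2 deduction that a removed vertex of $H_i$ must return through an opening head is also false in general. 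Take $u\in T_{i''}\cap H_i\cap T_{i'}$ with $i''<i<i'<j''<j<j'$; then $u$ returns at the closing $j''$.

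Finally, the surgery fallback fails on the same example. Renaming $u$ in $H_3$ forces, by mirroring and walk validity, renaming it in $T_6$, $T_4$, $H_1$, $T_2$ and $H_5$, i.e.\ everywhere. The result is just a relabeling, and the defect stays $1$.

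\textbf{What is missing.} The paper enforces both conditions by fiat and then proves this is consistent.
\begin{itemize}
\item Just before an opening $i'$, exactly those vertices of $T_{i'}$ whose current label lies in $\rho_{i-1}(T_i)$ for some open pair $ij$ crossing $i'$ receive fresh labels.
\item Just before a closing $j$ with opening $i$, the labels of $T_j=H_i$ are reset to $\rho_{i-1}(\phi_i(\cdot))$. Pairing compatibility then holds by definition.
\end{itemize}
The real work has two parts.
\begin{itemize}
\item \emph{Injectivity of the resets.} This rests on two structural facts absent from your sketch. First, among open pairs whose tail-label sets contain a label $a$, the most recently opened one closes first; this follows from crossing compatibility. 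Second, the vertex that would carry $a$ by default is the $u\in H_i$, for $i$ that most recent opening, with $a=\rho_{i-1}(\phi_i(u))$.
\item \emph{The count.} Each fresh label is charged to a distinct pair $(u,i)$ with $u\in H_i$, at most $\delta(u)$ times per vertex. This bounds the excess and the fresh-label switches by $\Delta$. One then shows that any vertex switched by a reset has previously received a fresh label, and is reset at most $\delta(u)+1$ times. This gives at most $2\Delta$ further switches.
\end{itemize}
Your ``move the occupant to a reservoir label'' and ``two switches per coincidence plus one'' carry no such accounting. That accounting is exactly the cascade you leave open as the main obstacle.
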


\begin{proof}
    Let $(\bd e,\pi)$ be a defect-$\Delta$ paired directed Kikuchi walk. Let $\rho_0(u)\defeq u$. For each $t=1,\ldots,2p-1$,
    \begin{enumerate}
        \item Suppose that $t+1$ is an opening index with closing $j$. For any $u\in T_{t+1}$ such that $\rho_{t-1}(\phi_t(u))\in \rho_{i-1}(T_i)$ for some opening $i\le t$ whose pair crosses $(t+1, j)$, let $\rho_t(u)$
        be a fresh new label. For every other
        $u\in U_t$, let $\rho_t(u) \defeq \rho_{t-1}(\phi_t(u))$.
        
        \item Suppose that $t+1$ is a closing index with opening $i$. Then define
        \[
            \rho_{t}(u)\defeq \begin{cases}\rho_{i-1}(\phi_i(u)) & \text{if $u\in T_{t+1}$,}\\
           \rho_{t-1}(\phi_t(u)) & \text{if $u\in U_{t}\setminus T_{t+1}$.}
            \end{cases}
        \]
    \end{enumerate}
    Finally, we set $\rho_{2p}(u) \defeq \rho_{2p-1}(\phi_{2p}(u))$.
    We show that this algorithm constructs a 
    Kikuchi transport $\bd \rho = (\rho_0, \ldots, \rho_{2p})$ of excess size at most $\Delta$
    and cost at most $3\Delta$.
    By construction, Steps 1 and 2 ensure that both compatibility conditions of Definition~\ref{def:transport} are satisfied:
    \[
        \rho_{i-1}(T_i)=\rho_{j-1}(T_j)\quad \forall ij\in E(\pi)\quad\text{ and }\quad \rho_{i-1}(T_i)\cap \rho_{i'-1}(T_{i'})=\varnothing\quad \forall \{ij,i'j'\}\in E(G_\pi)\,.
    \]
    For any label $a$,
    consider
    \[
        O_t(a) \defeq \left\{i\le t : a\in \rho_{i-1}(T_i)\text{ and }\exists j>t, \;ij\in E(\pi)\right\}\,.
    \]
    The crossing compatibility condition
    directly implies:
    \begin{claim}\label{claim:orderingOT}
        For any $t$ and $a$, out of all the
        openings in $O_t(a)$, the one that
        closes first is $\max O_t(a)$.
    \end{claim}
    
    \begin{claim}\label{claim:structuralAlg}
        For any $t\ge 1$, the map $\rho_t$ is
        injective. Furthermore, if $u\in U_t$
        is such that $a\defeq \rho_{t-1}(\phi_t(u))$ satisfies $O_t(a)\neq \varnothing$, then $u\in H_i$ and $a = \rho_{i-1}(\phi_i(u))$, where $i\defeq \max O_t(a)$.
    \end{claim}

    \begin{proof}[Proof of Claim~\ref{claim:structuralAlg}]
        We proceed by induction on $t$. The statements are immediate for $t=0$, as $\rho_0=\textnormal{id}$ and $O_0(a)=\varnothing$ for all labels $a$.
        Now suppose that the claims hold for $0, \ldots, t-1$. We first prove the
        ``Furthermore'' part. Let $u\in U_t$, $a=\rho_{t-1}(\phi_t(u))$, and suppose that $O_t(a)\neq \varnothing$. There
        are two cases:
        \begin{itemize}
            \item Suppose that $t$ is an opening. We
            either have $O_t(a)=O_{t-1}(a)$
            if $a\notin \rho_{t-1}(T_t)$,
            or $O_t(a) = O_{t-1}(a)\cup \{t\}$
            if $a\in \rho_{t-1}(T_t)$.
            By injectivity of $\rho_{t-1}$, these two cases are equivalent respectively to $u\notin H_t$ and $u\in H_t$.
            In the former case, $a = \rho_{t-1}(u) = \rho_{t-2}(\phi_{t-1}(u))$, so the claim follows by induction.
            In the latter case, $t=\max O_t(a)$ and the claim is immediate.
            
            \item Suppose that $t$ is a closing, with
            opening $k$. Assume first $u\notin H_t$. By the same
            reasoning as in the opening case, $a=\rho_{t-2}(\phi_{t-1}(u))$.
            We claim that $k\notin O_{t-1}(a)$: otherwise, Claim~\ref{claim:orderingOT} shows that $k=\max O_{t-1}(a)$, which by induction would imply that $u\in H_k =T_t$, contradicting $u\in U_t$. Hence, $O_t(a)=O_{t-1}(a)$ and the claim follows by induction. Next, assume $u\in H_t$. Then by the
            construction in Step 2, $a = \rho_{k-1}(\phi_k(\phi_t(u))) = \rho_{k-1}(u)$ (using $\phi_k = \phi_t^{-1}$). By crossing
            compatibility, we must have $O_{k-1}(a)=O_t(a)$. In particular, $O_{k-1}(a)\neq\varnothing$ and therefore $a=\rho_{k-2}(\phi_{k-1}(u))$; the claim then
            follows from the induction hypothesis at step $k-1$.
        \end{itemize}
        The construction of $\rho_t$
        when $t+1$ is an opening clearly inherits injectivity from $\rho_{t-1}$. Suppose that $t+1$ is a
        closing index with opening $i$.
        The labels assigned by Step 2 to $T_{t+1}$ are pairwise distinct, so it
        remains to show that none of these
        labels can occur outside of $T_{t+1}$.
        For any $a\in \rho_{i-1}(T_{i})$, we have $i\in O_t(a)$,
        so $i=\max O_t(a)$ by Claim~\ref{claim:orderingOT} and
        so any $u\in U_{t}$ such that
        $\rho_{t-1}(\phi_t(u))\in \rho_{i-1}(T_i)$
        must satisfy $u\in H_i=T_{t+1}$ by 
        induction. This concludes the induction
        step.
    \end{proof}

    For the transport size, charge every replacement of a label $a=\rho_{t-1}(\phi_t(u))$ in Step $1$ to
    $(u,\max O_t(a))$. By
    Claim~\ref{claim:orderingOT}, the
    pair $ij\in E(\pi)$ containing $i\defeq \max O_t(a)$ is the first pair in $O_t(a)$ to close. 
    After its replacement, the label $a$
    disappears from the walk, and its first
    possible reappearance is at the closing
    of an opening in $O_t(a)$. Hence, $a$
    cannot reappear before step $j$, while after $j$, the pair $ij$ is no longer open.
    In summary, every pair $(u,i)$ can be
    charged at most once, and the pairs that are charged must satisfy that $i$ is an opening index and $u\in H_i$ (by Claim~\ref{claim:structuralAlg}). When
    $u\notin U_0$, the last occurrence of
    $u$ in the walk is a closing, and its
    matching opening index cannot be
    charged. Thus, each $u$ can be
    charged at most $\delta(u)$ times,
    and the total number of replacements in
    Step 1 is at most $\sum_u \delta(u)=\Delta$.

    For the transport cost, by the
    above argument, the number
    of switches incurred by Step 1 is
    at most $\Delta$. Let $B(u)$ be the
    number of switches of $u$ incurred
    during Step 2. At every such switch occurring at a closing index $t+1$ with matching opening $i$, we have $u\in T_{t+1} = H_i$, so $B(u)\le \delta(u)+1$. Next, we claim
    that if $B(u)>0$, then $u$ must incur at
    least one switch in Step 1. 
    Assuming this claim, the
    total number of switches due to Step 2 is at most $\sum_{u : B(u)>0} (\delta(u) + 1)\le 2\Delta$, as desired.

    To prove the remaining claim, let
    $t$ be the first iteration
    for which Step 2 incurs a switch at $u$.
    Let $t+1$ have opening index $i$, and let
    $a\defeq \rho_{i-1}(\phi_i(u))$ be the new
    label. Consider the last iteration
    $i\le s<t$ for which $a\in \rho_{s-1}(U_{s-1})\setminus \rho_s(U_s)$, and let
    $v\in U_s$ be defined by $a = \rho_{s-1}(\phi_s(v))$. Then $\max O_s(a) = i$ (otherwise, the maximizing
    pair would close before the pair of $i$ and
    contradict the maximality of $s$),
    so by Claim~\ref{claim:structuralAlg},
    $a=\rho_{i-1}(\phi_i(v))$. By injectivity, this implies $v=u$. By minimality of $t$, the label
    of $u$ at time $s$ could not have been replaced at
    a Step 2, so it must be at a Step 1.
\end{proof}

\begin{lemma}\label{lem:boundSlot}
    For any fixed $D_1, \ldots, D_{2p}\in \binom {[\ell+\Delta]} \ell$,
    \[
        \sum_{\pi\in \calP(2p)}  |\{\varphi \in \Hom(G_\pi, K(\ell+\Delta,r)) : \varphi(ij)\subseteq D_i \cap D_j\;\forall ij\in E(\pi)\}|\le {\binom \ell r}^p \tau(\Sigma^{2p})\,.
    \]
\end{lemma}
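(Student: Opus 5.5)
The plan is to read the left-hand side as a single mixed moment of a Kneser semicircular system and then bound it with Hölder's inequality. Let $(S_u)_{u\in\binom{[\ell+\Delta]}{r}}$ be a $K(\ell+\Delta,r)$-independent semicircular system. For each $i\in[2p]$ define the self-adjoint operator
\[
    A_i \defeq \sum_{u\in \binom{[\ell+\Delta]}{r}} \one\{u\subseteq D_i\}\, S_u\,.
\]
I apply Lemma~\ref{lem:momentsGamma} with coefficients $c_{i,u}=\one\{u\subseteq D_i\}$. The product $\prod_{ij\in E(\pi)} c_{i,\varphi(ij)}c_{j,\varphi(ij)}$ equals $1$ exactly when $\varphi(ij)\subseteq D_i\cap D_j$ for every $ij\in E(\pi)$, and $0$ otherwise. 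Hence the left-hand side of the statement equals exactly $\tau(A_1\cdots A_{2p})$; in particular this moment is a nonnegative real number.

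Next I compute the diagonal moments $\tau(A_i^{2p})$. Applying Lemma~\ref{lem:momentsGamma} again, now with all $2p$ factors equal to $A_i$, gives
\[
    \tau(A_i^{2p})=\sum_{\pi\in\calP(2p)}|\{\varphi\in\Hom(G_\pi,K(\ell+\Delta,r)) : \varphi(E(\pi))\subseteq \tbinom{D_i}{r}\}|\,.
\]
These are the homomorphisms into the induced subgraph of $K(\ell+\Delta,r)$ on the vertex set $\binom{D_i}{r}$. Since $|D_i|=\ell$ and adjacency means disjointness, this induced subgraph is isomorphic to $K(\ell,r)$. Therefore $\tau(A_i^{2p})=\sum_\pi|\Hom(G_\pi,K(\ell,r))|$. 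By Lemma~\ref{lem:momentsGamma} and the definition~\eqref{eq:limitOperator} of $\Sigma$, the right-hand side equals $\binom{\ell}{r}^p\tau(\Sigma^{2p})$.

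Finally I use Hölder's inequality (Lemma~\ref{lem:holder}) with $k=2p$ factors, which is even. This requires $\tau$ to be tracial on $C^\ast(S_u : u)$, and that is supplied by Theorem~\ref{thm:existenceToeplitz}: take $\Gamma$ to be the adjacency matrix of $K(\ell+\Delta,r)$, whose diagonal entries are $0<1$. By the accompanying Remark, that representation has the same joint moments as the system of Definition~\ref{def:gammaSystem}, so we may work in it. The $A_i$ are self-adjoint elements of this algebra, and Hölder gives
\[
    \tau(A_1\cdots A_{2p})\le\prod_{i=1}^{2p}\tau(A_i^{2p})^{\frac1{2p}}=\binom{\ell}{r}^p\tau(\Sigma^{2p})\,.
\]

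I expect the main point requiring care to be the identification of the diagonal moments $\tau(A_i^{2p})$ with those of a $K(\ell,r)$ system. This is the step that uses that each $D_i$ has size exactly $\ell$, together with the fact that induced subgraphs of Kneser graphs on $\binom{D}{r}$ are again Kneser graphs. The rest is a direct combination of Lemma~\ref{lem:momentsGamma}, traciality, and Hölder's inequality.
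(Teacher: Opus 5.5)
Your proposal is correct and is essentially the paper's proof: the paper sets $Q_i=\sum_{a\in\binom{D_i}{r}}S_a$, reads the left-hand side as $\tau(Q_1\cdots Q_{2p})$ via Lemma~\ref{lem:momentsGamma}, uses that each $Q_i$ is the sum of a $K(\ell,r)$-independent system so that $\tau(Q_i^{2p})=\binom{\ell}{r}^p\tau(\Sigma^{2p})$, and concludes with Hölder's inequality (Lemma~\ref{lem:holder}). The paper leaves traciality implicit; your justification of it via Theorem~\ref{thm:existenceToeplitz} and the accompanying Remark is a correct way to fill in that detail.
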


\begin{proof}
    Let $\{S_a : a \in \binom {[\ell+\Delta]} r\}$ be a $K(\ell+\Delta,r)$-independent semicircular system, and
    \[
        Q_i \defeq \sum_{a\in \binom {D_i} r} S_a\quad \forall i\in [2p]\,.
    \]
    Each $Q_i$ is a sum of a $K(\ell,r)$-independent semicircular system, so $\tau(Q_i^{2p}) = {\binom \ell r}^p\tau(\Sigma^{2p})$.
    Further, by Lemma~\ref{lem:momentsGamma}, the
    left-hand side of Lemma~\ref{lem:boundSlot} is precisely $\tau(Q_1\cdots Q_{2p})$. By Hölder's inequality (Lemma~\ref{lem:holder}), we obtain
    \[
        \tau(Q_1\cdots Q_{2p})\le \prod_{i=1}^{2p} \tau(Q_i^{2p})^{\frac 1 {2p}} = {\binom \ell r}^{p} \tau(\Sigma^{2p})\,,
    \]
    which concludes the proof.
\end{proof}

\begin{proof}[Proof of Theorem~\ref{thm:directedIntro}]
    We count the
    triples $(\bd e,\pi,\bd \rho)$ satisfying the conclusion of Lemma~\ref{lem:existenceTransport}.
    By existence of the labeling, this upper bounds the number of paired directed Kikuchi walks (that the trace moments count exactly). 
    
    Fix
    the defect $\Delta\in \{0,\ldots,rp\}$ and a set of switches $\sigma\subseteq [2p]\times [\ell+\Delta]^2$ from Definition~\ref{def:switches}, encoded as triples $(t,\rho_{t-1}(\phi_t(u)),\rho_t(u))$, of size $|\sigma|\le 3\Delta$. There are at
    most $(2p(\ell+\Delta)^2+1)^{3\Delta}$ choices for
    this set.
    By construction, the switches determine the labels of the states $(\rho_{i}(U_i))_{i\in [2p]}$.
    Given that information, Lemma~\ref{lem:boundSlot} applied to $D_i = \rho_{i-1}(U_{i-1})$ then
    implies that the total number of
    remaining possibilities for the pairing
    $\pi\in\calP(2p)$ and the
    sequence $(\rho_{i-1}(T_i))_{i\in [2p]}$ is at most
    ${\binom \ell r}^p \tau(\Sigma^{2p})$. Finally, fix the number of coincidences $\delta_j$ at the $j$-th opening index.

    We construct the maps $\rho_i$ and the hyperedges $e_i$ for $i=1,\ldots,2p$ from left to right. At the $j$-th opening index $i\in [2p]$,
    the tail indices $T_i$ can be
    obtained from $\rho_{i-1}(T_i)$
    using injectivity of $\rho_{i-1}$.
    For the head $H_i$, we must choose $\delta_j$ vertices among
    the $\le 2pr$ vertices that have
    appeared previously in a tail,
    and $r-\delta_j$ vertices among
    the $n-\ell$ vertices outside of the current state. At
    a closing index, the head and
    tail are uniquely determined
    from the matching opening index,
    using the mirroring property.\footnote{This is where we use the fact that the Kikuchi matrix is directed.} The map $\rho_i$ is then obtained from $\rho_{i-1}$ using the set of switches.
    Overall,
    \begin{align*}
        \E \tr X_n^{2p}&\le \frac {{\binom \ell r}^p} {d_n^p  } \tau(\Sigma^{2p}) \sum_{\Delta=0}^{rp}  (2p(\ell+\Delta)^2+1)^{3\Delta} \sum_{\delta_1+\ldots+\delta_p=\Delta} \prod_{j=1}^p \binom {2pr} {\delta_j} \binom {n-\ell} {r-\delta_j}\\
        &= \tau(\Sigma^{2p}) \sum_{\Delta=0}^{rp} (2p(\ell+\Delta)^2+1)^{3\Delta} \sum_{\delta_1+\ldots+\delta_p=\Delta} \prod_{j=1}^p \frac {\binom {2pr} {\delta_j} \binom {n-\ell} {r-\delta_j}} {\binom {n-\ell} r}
    \end{align*}
    The combinatorial coefficient can be bounded by
    \[
        \sum_{\delta_1+\ldots+\delta_p=\Delta} \prod_{j=1}^p \frac {\binom {2pr} {\delta_j} \binom {n-\ell} {r-\delta_j}} {\binom {n-\ell} r}\le  \sum_{\delta_1+\ldots+\delta_p=\Delta} \prod_{j=1}^p \left(\frac {4p r^2} {n-\ell}\right)^{\delta_j}\le \left(\frac {4p^2 r^2} {n-\ell}\right)^{\Delta}\,.
    \]
    Every term
    in the sum under the power $\Delta$ is then bounded by
    \[
        (2p(\ell+\Delta)^2+1)^{3} \cdot \left(\frac {4p^2 r^2} {n-\ell}\right)\lesssim \frac {p^5 \max\{\ell,rp\}^{6} r^2}{n-\ell} \underset{n\to\infty}\longrightarrow 0\,,
    \]
    provided, for example, that $r\le C$ and $\max\{p,\ell\}\le n^{c}$ for constants $c,C>0$.
    The desired upper bound then follows from Markov's inequality, and the matching lower bound follows from weak convergence.
\end{proof}
\section{Convergence to the \texorpdfstring{$q$}{q}-Gaussian system}
\label{sec:qGaussian}

In this section, we prove Theorem~\ref{thm:qGaussianConvergence}. 
For a graph $\Gamma$ on $N$ vertices, we denote by $\lambda_1(\Gamma)\ge \ldots \ge \lambda_N(\Gamma)$ the eigenvalues of its adjacency matrix.
We 
start by computing the spectrum of the adjacency matrix of the Kneser graphs.

\begin{lemma}[See {\cite[Theorem 9.4.3]{godsilCombinatorics}}]\label{lem:eigenvaluesKneser}
    For $\ell>2r$, the adjacency matrix of
    $K(\ell,r)$ has eigenvalues
    \[
        (-1)^i \binom {\ell-r-i} {r-i}\qquad i = 0,\ldots,r\,,
    \]
    and the eigenvalue for $i=0$ has multiplicity $1$.    
\end{lemma}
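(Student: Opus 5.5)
The plan is to follow the standard association-scheme argument for the Johnson scheme: diagonalize the Kneser adjacency matrix $A$ of $K(\ell,r)$ via the decomposition of $\R^{\binom{[\ell]}r}$ into irreducible $S_\ell$-modules. For $i=0,\ldots,r$, let $W_i\subseteq\R^{\binom{[\ell]}r}$ be the image of the inclusion map $M_i:\R^{\binom{[\ell]}i}\to\R^{\binom{[\ell]}r}$, $(M_i f)(a)=\sum_{b\subseteq a,|b|=i}f(b)$. We then set $V_i\defeq W_i\cap W_{i-1}^\perp$. Since $\ell\ge 2r$, each $M_i$ is injective, so $\dim W_i=\binom \ell i$ and $\dim V_i=\binom\ell i-\binom\ell{i-1}$. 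This gives the orthogonal decomposition $\R^{\binom{[\ell]}r}=V_0\oplus\cdots\oplus V_r$.

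Because $A$ commutes with the $S_\ell$-action and each $V_i$ is an irreducible module, with the $V_i$ pairwise non-isomorphic (they are the Specht modules $S^{(\ell-i,i)}$), Schur's lemma shows $A$ acts on $V_i$ as a scalar $\theta_i$. To compute $\theta_i$ without representation theory, I would use the vectors $f=\sum_{b\subseteq a}\prod\cdots$ built as follows. Pick disjoint pairs $\{x_1,y_1\},\ldots,\{x_i,y_i\}$ and set $g=\prod_{s=1}^i(\one_{x_s}-\one_{y_s})$, viewed as the function on $r$-sets $a$ given by
\[
g(a)=\prod_{s}\big(\one\{x_s\in a,y_s\notin a\}-\one\{y_s\in a,x_s\notin a\}\big).
\]
One checks that $g\in V_i$, since it is orthogonal to $W_{i-1}$ by sign cancellation. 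Next evaluate $(Ag)(a_0)$ at a set $a_0\supseteq\{x_1,\ldots,x_i\}$ disjoint from the $y$'s, where $g(a_0)=1$. Any neighbor $b$ of $a_0$ is disjoint from $a_0$, hence avoids all $x_s$, so $g(b)\neq0$ forces $b\ni y_1,\ldots,y_i$, and then $g(b)=(-1)^i$. The remaining $r-i$ elements of $b$ are chosen from $[\ell]\setminus(a_0\cup\{y_1,\dots,y_i\})$, a set of size $\ell-r-i$. Hence
\[
\theta_i=(-1)^i\binom{\ell-r-i}{r-i}.
\]

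For the multiplicity claim, $i=0$ gives $\theta_0=\binom{\ell-r}{r}$ on $V_0=\mathrm{span}(\one)$, which has dimension $1$. It remains to note that $\theta_0$ differs from every other $\theta_i$. For odd $i$ this is clear, since $\theta_i\le0<\theta_0$ when $\ell>2r$. For even $i\ge2$ we have $|\theta_i|=\binom{\ell-r-i}{r-i}<\binom{\ell-r}{r}$, because $\binom{m-1}{k-1}=\frac km\binom mk<\binom mk$ with $m=\ell-r>r\ge k$. This strict inequality is exactly where the hypothesis $\ell>2r$ enters.

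I expect the main obstacle to be verifying that $V_i\ne0$ and that the vectors $g$ span $V_i$, that is, that the decomposition is exhaustive, rather than the eigenvalue computation itself. I would handle this by the dimension count: the $V_i$ are orthogonal and their dimensions telescope to $\binom\ell r$. Alternatively, since the statement is cited from Godsil's book, I would simply invoke that reference and present the computation above only as a sanity check.
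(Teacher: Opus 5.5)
Your argument is correct. The paper does not prove this lemma; it only cites Godsil--Royle, whose proof is the standard linear-algebra argument via inclusion matrices, so the real comparison is with that route.

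The two routes differ in how you show that $A$ acts as a scalar on $V_i$:
\begin{itemize}
\item You use Schur's lemma. This rests on two nontrivial black boxes: the full-rank theorem for inclusion matrices, and Young's rule $V_i\cong S^{(\ell-i,i)}$. You then read off the scalar from the test vector $g$, which is a clean and correct computation.
\item The inclusion-matrix route needs no representation theory. Counting gives $(AM_if)(a)=\binom{\ell-r-i}{r-i}\sum_{|b|=i,\,b\cap a=\varnothing}f(b)$. Inclusion--exclusion, $\one\{b\cap a=\varnothing\}=\sum_{d\subseteq b}(-1)^{|d|}\one\{d\subseteq a\}$, shows this lies in $(-1)^i\binom{\ell-r-i}{r-i}M_if+W_{i-1}$. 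Hence $A$ preserves every $W_i$. Being symmetric, it also preserves $V_i=W_i\cap W_{i-1}^\perp$, and it acts there as $\theta_i$ with no irreducibility input.
\end{itemize}
Your version buys an explicit eigenvector; the inclusion-matrix version buys a proof free of Young's rule and of Schur's lemma.

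Two small points on your write-up:
\begin{itemize}
\item You use without stating it that $W_{i-1}\subseteq W_i$. This holds because each $(i-1)$-subset of $a$ lies in exactly $r-i+1$ of its $i$-subsets. It is what makes the $V_i$ telescope to the whole space.
\item You also use $g\in W_i$ without comment. This is immediate from expanding the product into a signed sum of $M_i\delta_{\{z_1,\ldots,z_i\}}$.
\end{itemize}

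The obstacle you anticipate is not really one. The statement as given never needs the $g$'s to span $V_i$, nor the exact dimensions of the $V_i$. It only needs four facts: that $\bigoplus_i V_i$ is the whole space; that $0\neq g\in V_i$, so each $\theta_i$ actually occurs; that $\dim V_0=1$; and your strict bound $|\theta_i|<\theta_0$ for $i\ge 1$, which is exactly where $\ell>2r$ is used.
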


\noindent We will use Lemma~\ref{lem:eigenvaluesKneser}
to show that the Kneser graphs are quasirandom
in the sense of Chung-Graham-Wilson~\cite{chungGramWilson}. One equivalent
condition for quasirandomness is the convergence
of all homomorphism counts to those of an \Erdos-\Renyi random graph with matching density.

\begin{lemma}[See~\cite{chungGramWilson}]\label{lem:quasiRandomEquivalence}
    Let $(\Gamma_\ell)_{\ell\ge 1}$ be a
    sequence of regular graphs whose edge density
    converges to $q\in [0,1]$. Suppose that
    \[
        \max_{i\ge 2} |\lambda_i(\Gamma_\ell)| = o(|V(\Gamma_\ell)|)\,.
    \]
    Then for every fixed graph $G$,
    \[
        \frac {|\Hom(G, \Gamma_\ell)|}{|V(\Gamma_\ell)|^{|V(G)|}}\underset{\ell\to\infty}\longrightarrow q^{|E(G)|} \,.
    \]
\end{lemma}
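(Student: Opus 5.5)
We prove Lemma~\ref{lem:quasiRandomEquivalence} with the standard spectral argument: count homomorphisms by an induction on the number of edges of $G$, expressing each count through the adjacency matrix $A=A_\ell$ of $\Gamma_\ell$. Write $N=|V(\Gamma_\ell)|$ and let $d=d_\ell$ be the common degree. By regularity, $\lambda_1=d$ with eigenvector $\mathbf 1/\sqrt N$, and $d/N\to q$. Decompose
\[
A=\frac dN J+E,\qquad J=\mathbf 1\mathbf 1^{\top}.
\]
Here $E$ is symmetric, kills $\mathbf 1$, and has $\|E\|=\max_{i\ge2}|\lambda_i|=o(N)$ by hypothesis. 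Then
\[
|\Hom(G,\Gamma_\ell)|=\sum_{\varphi:V(G)\to V(\Gamma_\ell)}\prod_{uv\in E(G)}A_{\varphi(u)\varphi(v)}.
\]
Substituting $A=\frac dN J+E$ and expanding the product over edges gives $2^{|E(G)|}$ terms, one for each subset $F\subseteq E(G)$ of edges that receive $E$. The term with $F=\varnothing$ equals $(d/N)^{|E(G)|}N^{|V(G)|}$, which after dividing by $N^{|V(G)|}$ tends to $q^{|E(G)|}$. The main step is therefore to show that each term with $F\neq\varnothing$ is $o(N^{|V(G)|})$.

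For a fixed nonempty $F$, the term is $(d/N)^{|E(G)|-|F|}$ times
\[
h_F\defeq\sum_{\varphi}\prod_{uv\in F}E_{\varphi(u)\varphi(v)},
\]
since the $J$-factors contribute $1$. Vertices of $G$ not covered by $F$ contribute a free factor of $N$ each, so it suffices to bound the homomorphism-type sum of the graph $(V(F),F)$ weighted by $E$. Fix one edge $uv\in F$ and bound every other factor crudely by $|E_{xy}|\le |A_{xy}|+d/N\le 2$. This leaves
\[
\sum_{\varphi}|E_{\varphi(u)\varphi(v)}|\cdot 2^{|F|-1}\le 2^{|F|-1}N^{|V(F)|-2}\sum_{x,y}|E_{xy}|.
\]
Cauchy--Schwarz then gives
\[
\sum_{x,y}|E_{xy}|\le N\|E\|_F=N\Bigl(\sum_{i\ge2}\lambda_i^2\Bigr)^{1/2}.
\]
The difficulty is that $\sum_i\lambda_i^2=\operatorname{tr}A^2=Nd$ yields only $O(N^{2})$, not $o(N^2)$. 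So the crude bound fails, and this is the expected main obstacle: the spectral gap must genuinely be used.

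To use it, keep two factors of $E$ instead of one. If $F$ contains a vertex $w$ of degree $\ge 2$ with $F$-edges $wu$ and $wv$, sum over $\varphi(w)$ first: $\sum_z E_{xz}E_{zy}=(E^2)_{xy}$. Bounding the remaining $E$-factors by $2$ and then
\[
\sum_{x,y}|(E^2)_{xy}|\le N\|E^2\|_F\le N\|E\|\,\|E\|_F\le N\cdot o(N)\cdot O(N)=o(N^3)
\]
gives $h_F=o(N^{|V(F)|})$, since $\varphi(u),\varphi(v),\varphi(w)$ account for three of the vertices. If instead $F$ is a matching, each edge $xy$ gives an independent factor $\sum_{x,y}E_{xy}=\mathbf 1^{\top}E\mathbf 1=0$, so $h_F=0$. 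In both cases $h_F=o(N^{|V(F)|})$, and therefore $|\Hom(G,\Gamma_\ell)|/N^{|V(G)|}\to q^{|E(G)|}$. If a cleaner route is needed, one can instead cite Chung--Graham--Wilson: the cycle-count condition $\operatorname{tr}A^4=d^4+\sum_{i\ge2}\lambda_i^4\le d^4+o(N^2)\cdot Nd=(q^4+o(1))N^4$ is one of their equivalent quasirandomness conditions, and it implies all homomorphism densities converge.
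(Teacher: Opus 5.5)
Your self-contained argument breaks at the $E^2$ step. After choosing $w$ with $F$-edges $wu,wv$, you sum over $z=\varphi(w)$ to form $(E^2)_{xy}$ and bound the remaining $E$-factors by a constant. This is legitimate only if none of those factors depends on $z$, i.e.\ only if $\deg_F(w)=2$. When $\deg_F(w)\ge 3$, the factors $E_{z\varphi(t)}$ for the other neighbours $t$ of $w$ sit inside the $z$-sum. Bounding them in absolute value first leaves $\sum_z|E_{xz}||E_{zy}|$, which is of order $N$ for every $x,y$ whenever $q\in(0,1)$, since $|E_{xz}|\in\{d/N,1-d/N\}$. This gives only $O(N^{|V(F)|})$, the same loss you diagnosed for your one-edge bound.

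The bad case cannot be avoided by choosing $w$ well. For $F=K_4$ or $F=K_{1,3}$ no vertex has $F$-degree exactly $2$, and $F$ is not a matching. These arise as the term $F=E(G)$ when $G$ is $K_4$ or $K_{1,3}$, and both occur as crossing graphs $G_\pi$. So your case analysis does not cover all nonempty $F$.

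The repair is easy, and your ``main obstacle'' is self-inflicted: in the one-edge bound you put absolute values on $E_{xy}$ and threw away the bilinear structure. Fix an edge $uv\in F$ and the values of $\varphi$ off $\{u,v\}$. Since $G$ is simple, the product becomes $\gamma\,E_{xy}\,\alpha(x)\beta(y)$ with $|\alpha|,|\beta|,|\gamma|\le 1$ (note $|E_{xy}|\le 1$ because $A_{xy}\in\{0,1\}$ and $d/N\in[0,1]$). Hence $\bigl|\sum_{x,y}E_{xy}\alpha(x)\beta(y)\bigr|\le\|E\|\,\|\alpha\|_2\|\beta\|_2\le N\|E\|$, so $|h_F|\le N^{|V(F)|-1}\|E\|=o(N^{|V(F)|})$ for every nonempty $F$, with no case split. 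Alternatively, within your own route, replace $E^2$ by $ED_cE$, where $D_c=\mathrm{diag}(c)$ collects the other factors at $w$; then $\|ED_cE\|_F\le\|E\|\,\|D_c\|\,\|E\|_F=o(N^2)$ and your argument goes through.

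With this fix you have a genuine self-contained spectral proof. It treats all $q\in[0,1]$ uniformly and gives the explicit error $2^{|E(G)|}\max_{i\ge2}|\lambda_i|/N$. The paper gives no argument for this lemma; it simply cites Chung--Graham--Wilson, where the eigenvalue condition is one of the equivalent quasirandomness properties. Your closing remark, deducing $\mathrm{tr}A^4=(q^4+o(1))N^4$ and invoking their theorem, is exactly that route and is correct.
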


We use the following convention in this section: denoting by $K_2$ the
graph consisting of a single edge, the edge density of $\Gamma$ is $ {|\Hom(K_2,\Gamma)|}/{|V(\Gamma)|^2}$.

\begin{lemma}[{See~\cite[Section 3]{qGaussianSupport}}]\label{lem:supportQGaussian}
    Let $S$ be a $q$-Gaussian variable (Definition~\ref{def:qGaussianSystem}) for
    $q\in [0,1)$. Then there exists a probability distribution $\mu_q$
    with full support on
    \[
        \left[-\frac 2 {\sqrt{1-q}},\frac 2 {\sqrt{1-q}}\right]
    \]
    such that $\tau(S^p) = \int x^p\diff \mu_q(x)$ for all $p\ge 1$.
    In particular, we have $\|S\| = \frac 2 {\sqrt{1-q}}$.
\end{lemma}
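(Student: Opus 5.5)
The plan is to reduce the single $q$-Gaussian variable to a Jacobi operator on $\ell^2(\mathbb{N}_0)$ and read its spectral measure off from there.

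\textbf{Step 1: the Jacobi model.} For one variable, the Toeplitz relation~\eqref{eq:toeplitz} reads $L^\ast L = qLL^\ast + I$, with $L^\ast x = 0$. Set $e_n \defeq L^n x/\|L^n x\|$. An induction using $L^\ast L^{n+1}x = [n+1]_q L^n x$, where $[n]_q \defeq (1-q^n)/(1-q)$, shows two things: the vectors $L^n x$ are pairwise orthogonal, and $\|L^{n}x\|^2=\prod_{k=1}^n [k]_q>0$. Hence $\{e_n\}$ is an orthonormal basis of the cyclic subspace $\calH_0=\overline{\mathrm{span}}\{S^k x\}$. On it,
\[
    Le_n=\sqrt{[n+1]_q}\,e_{n+1}\,, \qquad L^\ast e_n=\sqrt{[n]_q}\,e_{n-1}\,.
\]
So $S|_{\calH_0}$ is the Jacobi matrix $J$ with zero diagonal and off-diagonal entries $a_n=\sqrt{[n]_q}$. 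By the remark after Theorem~\ref{thm:existenceToeplitz}, moments and norms are determined by conditions 1--3, so it suffices to work with $J$ and $x=e_0$. Define $\mu_q$ as the spectral measure of the self-adjoint operator $J$ at $e_0$. Then $\tau(S^p)=\langle e_0,J^pe_0\rangle=\int x^p\,\diff\mu_q$, and $\mu_q$ is compactly supported since $J$ is bounded.

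\textbf{Step 2: identify the support.} Let $a\defeq 1/\sqrt{1-q}$. The sequence $a_n$ increases to $a$, with $a-a_n=O(q^n)$.
\begin{itemize}
    \item Upper bound. Since $J$ has nonnegative entries, for any $y$ we have $|\langle y,Jy\rangle|\le\langle |y|,J|y|\rangle\le\langle|y|,J_a|y|\rangle\le 2a\|y\|^2$. Here $J_a$ is the free Jacobi matrix with constant off-diagonal $a$, whose spectrum is $[-2a,2a]$. Hence $\sigma(J)\subseteq[-2a,2a]$.
    \item Lower bound. $J-J_a$ has off-diagonal entries tending to $0$, so it is compact. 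By Weyl's theorem, $\sigma_{\mathrm{ess}}(J)=\sigma_{\mathrm{ess}}(J_a)=[-2a,2a]$.
\end{itemize}
Together, $\sigma(J)=[-2a,2a]$. Because $e_0$ is cyclic for $J$ (by construction of the basis), the support of the spectral measure $\mu_q$ equals $\sigma(J)$. This gives full support on $[-2a,2a]$.

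\textbf{Step 3: the norm.} $S$ acts on $\calH_0$ as $J$, so $\|S|_{\calH_0}\|=\max|\sigma(J)|=2a$. Faithfulness of $\tau$ on $C^\ast(S)$ (condition 3), combined with Lemma~\ref{lem:faithful}, then gives
\[
    \|S\|=\lim_p \tau(S^{2p})^{1/2p}=\lim_p\Big(\int x^{2p}\,\diff\mu_q\Big)^{1/2p}=\max\operatorname{supp}\mu_q=\frac{2}{\sqrt{1-q}}\,.
\]
The middle equality is where full support is used: since $\mu_q$ charges every neighbourhood of the endpoint $2a$, the $L^{2p}(\mu_q)$ norms of $x$ converge to $2a$. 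As a consistency check, the moments of $J$ at $e_0$ expand via Motzkin/Dyck paths weighted by $\prod[n]_q$, and these agree with the crossing-count formula of Lemma~\ref{lem:momentsQGaussian}.

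\textbf{Main obstacle.} The delicate point is Step 1: checking rigorously that the representation furnished by Theorem~\ref{thm:existenceToeplitz} restricts to exactly this Jacobi operator on the cyclic subspace. This requires the commutation identity $L^\ast L^{n+1}=[n+1]_qL^n+q^{n+1}L^{n+1}L^\ast$ and $L^\ast x=0$. The spectral-theoretic parts of Steps 2 and 3 are standard. If one wants an explicit density instead, the fallback is the continuous $q$-Hermite orthogonality measure, which has a theta-product density positive on the open interval.
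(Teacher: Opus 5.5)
Your argument is correct, but it is not the paper's route. The paper proves nothing here and defers to Bo\.zejko--Speicher, who compute the law of a single $q$-Gaussian explicitly. They identify it as the orthogonality measure of the continuous $q$-Hermite polynomials; the recurrence $xH_n = H_{n+1} + [n]_q H_{n-1}$ is exactly your Jacobi matrix. That measure has an infinite-product density which is strictly positive on the open interval $(-2/\sqrt{1-q},\, 2/\sqrt{1-q})$, and full support and the norm are read off from this formula.

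You avoid special functions entirely. The cyclic subspace of $x$ carries the Jacobi matrix with weights $\sqrt{[n]_q}\uparrow(1-q)^{-1/2}$, which is a compact perturbation of the free Jacobi matrix. Weyl's theorem then pins down the essential spectrum, the entrywise comparison gives the upper bound, and cyclicity of $e_0$ converts spectrum into support. The citation buys strictly more: absolute continuity, the exact density, and its edge behaviour. Your route is self-contained, uses only $L^\ast L = qLL^\ast + I$ and $L^\ast x = 0$, and works for any zero-diagonal Jacobi matrix with weights $0\le a_n\le a$, $a_n\to a$.

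Two small points. First, the identification of $\overline{\mathrm{span}}\{S^kx\}$ with $\overline{\mathrm{span}}\{L^kx\}$ should be justified explicitly. A one-line triangularity remark suffices: the $e_k$-coefficient of $S^kx$ is $\prod_{j\le k}\sqrt{[j]_q}>0$. Second, faithfulness is not actually needed for the norm. The relation alone gives $\|L\|^2=\|L^\ast L\|\le q\|L\|^2+1$, hence $\|S\|\le 2\|L\|\le 2/\sqrt{1-q}$ on all of $\calH$. The reverse inequality follows from \eqref{eq:faithfulAutomatic} together with full support.
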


\subsection{Weak convergence}

\begin{lemma}\label{lem:qGaussianWeak}
    Let $\alpha> 0$ and $\bd \Sigma$ be as in Theorem~\ref{thm:weakIntro}. Suppose that $r=r(\ell)$ is such that $r^2/\ell \to \alpha$ as $\ell\to\infty$. Then $\bd \Sigma$ converges weakly as $\ell\to\infty$
    to the $e^{-\alpha}$-Gaussian system with $k$
    variables.
\end{lemma}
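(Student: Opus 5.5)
The plan is to compare joint moments directly: expand the moments of $\bd\Sigma$ with Lemma~\ref{lem:momentsGamma}, and match them with the $q$-Gaussian moments of Lemma~\ref{lem:momentsQGaussian}, using the quasirandomness of Kneser graphs (Lemma~\ref{lem:quasiRandomEquivalence}). Since $\bd\Sigma$ is deterministic, weak convergence here just means convergence of every joint moment $\tau(\Sigma_{w_1}\cdots\Sigma_{w_p})$ for fixed $p$ and $\bd w\in[k]^p$.

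\textbf{Step 1: moments of $\bd\Sigma$.} Write $N\defeq\binom \ell r$ and apply Lemma~\ref{lem:momentsGamma} with $\Gamma=K^{(k)}(\ell,r)$ and coefficients $c_{i,(j,a)}=N^{-1/2}\one\{j=w_i\}$. A term $(\pi,\varphi)$ is nonzero only if $\varphi(ij)$ has first coordinate $w_i=w_j$ for every $ij\in E(\pi)$. This forces $\pi\in\calP(\bd w)$, and then the first coordinate of $\varphi$ is determined. Adjacency in $K^{(k)}(\ell,r)$ depends only on the second coordinate, so the remaining data is exactly an element of $\Hom(G_\pi,K(\ell,r))$. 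Since $G_\pi$ has $p/2$ vertices, this gives
\[
\tau(\Sigma_{w_1}\cdots\Sigma_{w_p})=\sum_{\pi\in\calP(\bd w)}\frac{|\Hom(G_\pi,K(\ell,r))|}{N^{|V(G_\pi)|}}.
\]
Odd moments vanish on both sides. By Lemma~\ref{lem:momentsQGaussian} with $q=e^{-\alpha}$, it therefore suffices to show that for every fixed graph $G$,
\[
\frac{|\Hom(G,K(\ell,r))|}{N^{|V(G)|}}\longrightarrow e^{-\alpha|E(G)|}.
\]
The sum over $\calP(\bd w)$ is finite and independent of $\ell$, so termwise convergence is enough.

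\textbf{Step 2: verify the hypotheses of Lemma~\ref{lem:quasiRandomEquivalence} for $\Gamma_\ell=K(\ell,r(\ell))$.} Since $r\sim\sqrt{\alpha\ell}$, we have $\ell>2r$ for large $\ell$. The graph is regular of degree $\binom{\ell-r}{r}$, so its edge density is
\[
q_\ell=\frac{\binom{\ell-r}r}{\binom\ell r}=\prod_{i=0}^{r-1}\frac{\ell-r-i}{\ell-i}.
\]
By Lemma~\ref{lem:eigenvaluesKneser}, the nontrivial eigenvalues are bounded in absolute value by $\binom{\ell-r-1}{r-1}$. This quantity equals $\frac{r}{\ell-r}\binom{\ell-r}{r}$, so it is at most $\frac{r}{\ell-r}N=o(N)$ because $r/\ell\to0$.

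\textbf{Step 3 (main, though elementary, obstacle): show $q_\ell\to e^{-\alpha}$.} Each factor of the product lies between $1-\frac{r}{\ell-r}$ and $1-\frac r\ell$. Using $\log(1-x)=-x+O(x^2)$, we get
\[
\log q_\ell=-\frac{r^2}{\ell}+O\!\left(\frac{r^3}{\ell^2}\right).
\]
The first term tends to $-\alpha$, and the error is $O(\ell^{-1/2})\to0$, so $q_\ell\to e^{-\alpha}$. Lemma~\ref{lem:quasiRandomEquivalence} then gives the convergence of homomorphism densities required in Step 1. Summing over $\pi\in\calP(\bd w)$ yields $\tau(\Sigma_{w_1}\cdots\Sigma_{w_p})\to\sum_{\pi\in\calP(\bd w)}e^{-\alpha|E(G_\pi)|}$, which is the corresponding joint moment of the $e^{-\alpha}$-Gaussian system.
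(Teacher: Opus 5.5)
Your proposal is correct and follows essentially the same route as the paper: express the joint moments through Lemma~\ref{lem:momentsGamma} as normalized homomorphism counts into $K(\ell,r)$, check regularity, edge density $\to e^{-\alpha}$ and the spectral condition via Lemma~\ref{lem:eigenvaluesKneser}, then apply Lemma~\ref{lem:quasiRandomEquivalence} and compare with Lemma~\ref{lem:momentsQGaussian}. The differences are only in the level of detail: you derive the $K^{(k)}(\ell,r)$ moment reduction and the bound $\binom{\ell-r-1}{r-1}\le \frac{r}{\ell-r}\binom{\ell}{r}$ explicitly, and you use a logarithmic expansion of the product instead of Stirling's formula for the edge density.
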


\begin{proof}
    We compute the limit of every 
    monomial
    indexed by $\bd w\in [k]^p$.
    The moment formula for a $\Gamma$-independent semicircular system (Lemma~\ref{lem:momentsGamma}) gives
    \begin{equation*}
        \tau\left(\Sigma_{w_1}\cdots \Sigma_{w_p}\right) = {{\binom \ell r}^{-p/2}} \sum_{\pi \in \calP(\bd w)} |\Hom(G_\pi,K(\ell,r))|\,.
    \end{equation*}
    The Kneser graph $K(\ell,r)$ is regular, with edge density
    \[
        \frac {\binom {\ell-r} r} {\binom \ell r} = \frac {((\ell-r)!)^2} {(\ell-2r)! \ell!} \underset{\ell\to\infty} {\longrightarrow} e^{-\alpha}\,,
    \]
    which can be verified by applying Stirling's formula. Moreover, Lemma~\ref{lem:eigenvaluesKneser} gives
    \begin{equation*}
        \frac 1 {\binom \ell r}\max_{i\ge 2} |\lambda_i(K(\ell,r))| \underset{\ell\to\infty} {\longrightarrow} 0\,.
    \end{equation*}
    Therefore, by Lemma~\ref{lem:quasiRandomEquivalence}, all
    homomorphism densities converge to those
    of an \Erdos-\Renyi random graph, i.e.,
    \[
        \tau\left(\Sigma_{w_1}\cdots \Sigma_{w_p}\right) \underset{\ell\to\infty}\longrightarrow \sum_{\pi \in \calP(\bd w)} e^{-\alpha|E(G_\pi)|}\,.
    \]
    This is exactly the moment formula for the
    $e^{-\alpha}$-Gaussian system (Lemma~\ref{lem:momentsQGaussian}).
\end{proof}

\subsection{Norm convergence for quasirandom independence graphs}

In this section, given a graph $\Gamma$,
we let $(L_u)_{u\in V(\Gamma)}$ be the
creation operators associated to the
$\Gamma$-independent semicircular system,
and define
\[
    \Sigma(\Gamma) \defeq \frac 1 {\sqrt{|V(\Gamma)|}} \sum_{u\in V(\Gamma)} (L_u + L_u^\ast)
\]
to be its normalized sum. We denote by $\omega(\Gamma)$ the maximum clique size of a
graph $\Gamma$.
The main result of this
subsection is:

\begin{proposition}\label{prop:pseudoRandom}
    Let $(\Gamma_{\ell})_{\ell\ge 1}$
    be a sequence of regular graphs whose edge
    density converges to $q\in [0,1)$.
    Suppose that
    \begin{equation}
        \omega(\Gamma_\ell)\max\{0,\lambda_2(\Gamma_\ell)\} + |\lambda_{|V(\Gamma_\ell)|}(\Gamma_\ell)|= o(|V(\Gamma_\ell)|)\quad\text{as $\ell\to\infty$}\,.\label{eq:pseudoRandomCondition}
    \end{equation}
    Then 
    \[
        \|\Sigma(\Gamma_\ell)\| \underset{\ell\to\infty}\longrightarrow \frac 2 {\sqrt {1-q}}\,.
    \]
\end{proposition}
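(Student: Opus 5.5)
Two bounds are needed. The lower bound $\liminf_\ell \|\Sigma(\Gamma_\ell)\|\ge 2/\sqrt{1-q}$ comes from weak convergence. Condition~\eqref{eq:pseudoRandomCondition} gives $\max_{i\ge2}|\lambda_i(\Gamma_\ell)|=o(|V(\Gamma_\ell)|)$: the negative eigenvalues are controlled by the $|\lambda_{|V|}|$ term, and $\omega\ge 1$ controls $\lambda_2$. So Lemma~\ref{lem:quasiRandomEquivalence} applies to $\Gamma_\ell$. Combining it with Lemma~\ref{lem:momentsGamma}, $\tau(\Sigma(\Gamma_\ell)^{2p})\to\sum_{\pi\in\calP(2p)} q^{|E(G_\pi)|}$ for each fixed $p$, and by Lemma~\ref{lem:momentsQGaussian} this is $\tau(S^{2p})$ for a $q$-Gaussian variable $S$. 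By~\eqref{eq:faithfulAutomatic}, $\liminf_\ell\|\Sigma(\Gamma_\ell)\|\ge \tau(S^{2p})^{1/2p}$ for every $p$. Letting $p\to\infty$ and using Lemma~\ref{lem:supportQGaussian} (full support on $[-2/\sqrt{1-q},2/\sqrt{1-q}]$) gives the lower bound.

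For the upper bound I would work directly on $\ell^2(\calT)$ rather than with moments, since moment methods at growing $p$ are hopeless for dense $\Gamma$. Write $N=|V(\Gamma)|$, $L=N^{-1/2}\sum_u L_u$, so that $\Sigma=L+L^\ast$ and $\|\Sigma\|\le 2\|L\|$ when $q=0$. In general, the Toeplitz relation~\eqref{eq:toeplitz} gives
\[
L^\ast L = I + \tfrac1N\sum_{u,v}\Gamma_{uv}L_vL_u^\ast .
\]
The plan is a Haagerup/Lehner-type bound. I would aim to show that $\|L^\ast L\|\le \frac{1}{1-q}+o(1)$ and, more precisely, that $\Sigma$ behaves like the $q$-Gaussian. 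The key step is to replace the operator $\frac1N\sum_{u,v}\Gamma_{uv}L_vL_u^\ast$ by $q\,LL^\ast$ up to an error of small norm. The difference is $\frac1N\sum_{u,v}(\Gamma_{uv}-q)L_vL_u^\ast=\frac1N\sum_{u,v}(\Gamma-qJ)_{uv}L_vL_u^\ast$. I would bound its norm by the spectral data of $\Gamma-qJ$: regularity makes the top eigenvector constant and removes the $q$ part up to $o(1)$, so what remains is $\lambda_2$ and $|\lambda_N|$.

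For the negative part of $\Gamma-qJ$, a positive-semidefinite bound suffices. For a PSD matrix $P$, the operator $\sum P_{uv}L_vL_u^\ast$ is bounded by $\|P\|$ times $\|\sum_u L_uL_u^\ast\|$. The latter is at most $N$ times a constant? No: $\sum_u L_uL_u^\ast$ is the projection-weighted count of the letters that can be removed at the right end of a word, and these letters form a clique in $\Gamma$. Hence $\|\sum_u L_uL_u^\ast\|\le\omega(\Gamma)$. This is exactly why $\omega\cdot\lambda_2$ appears in~\eqref{eq:pseudoRandomCondition}. The positive part $\lambda_2$ is multiplied by $\omega$, giving $o(1)$ after dividing by $N$. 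The negative eigenvalues, however, would need a cruder bound such as $\|\sum_{u,v}P_{uv}L_vL_u^\ast\|\le N\|P\|/N$-type, which only costs $|\lambda_N|/N$; I expect them to need the asymmetric treatment suggested by the condition.

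Having established $L^\ast L = I + qLL^\ast + E$ with $\|E\|=o(1)$, the next step is to compare with the $q$-Gaussian. A clean route is to bound $\|\Sigma\|$ by iterating. The relation forces $\|L\|^2\le 1+q\|L\|^2+o(1)$, hence $\|L\|\le (1-q)^{-1/2}+o(1)$, and $\|\Sigma\|\le 2\|L\|$ gives the sharp bound $2/\sqrt{1-q}$. The main obstacle is precisely the error estimate for $E$, in particular the step $\|\sum_u L_uL_u^\ast\|\le \omega(\Gamma)$ and handling the negative eigenvalues with only $|\lambda_N|/N$ loss. I would first try splitting $\Gamma-qJ=P_+-P_-$, bounding the $P_+$ term via the clique argument, and bounding the $P_-$ term via a Cauchy--Schwarz estimate in which $\sum_u L_u^\ast L_u$ appears instead, which equals $N\cdot I$.
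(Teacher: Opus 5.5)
Your lower bound is exactly the paper's argument. Your upper-bound strategy is also sound as far as it goes: use the Toeplitz relation for $L=N^{-1/2}\sum_u L_u$, the clique bound $\sum_u L_uL_u^\ast\preceq\omega(\Gamma)I$, and finally $\|\Sigma\|\le 2\|L\|$. Completed, it would be a self-contained proof of the inequality the paper imports from the parallel work as Proposition~\ref{prop:pseudoRandomUpperBound}.

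As written, however, there is a gap at the step you flag yourself. You require a two-sided estimate $\|E\|=o(1)$, and your treatment of the negative part of $\Gamma-qJ$ does not deliver it. For $P\succeq 0$ and any vector $x$, Cauchy--Schwarz gives
\[
\bigl|\textstyle\sum_{u,v}P_{uv}\langle L_v^\ast x,L_u^\ast x\rangle\bigr|\le\|P\|\sum_u\|L_u^\ast x\|^2=\|P\|\,\langle x,\textstyle\sum_u L_uL_u^\ast x\rangle .
\]
So the operator that appears is again $\sum_u L_uL_u^\ast$, not $\sum_u L_u^\ast L_u=N\cdot I$. The loss for $P_-$ is therefore $\omega(\Gamma)|\lambda_N|/N$, not $|\lambda_N|/N$. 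The hypothesis only gives $|\lambda_N|=o(N)$. For the Kneser graphs with $r^2/\ell\to\alpha$ one computes $\omega|\lambda_N|/N=\lfloor \ell/r\rfloor\binom{\ell-r-1}{r-1}/\binom{\ell}{r}\to e^{-\alpha}$. So this estimate does not go to $0$ in exactly the application, and your sketch contains no mechanism that would do better.

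The missing idea is that the negative part never needs to be controlled, because it enters with a favourable sign. Take $q_\ell$, the edge density of $\Gamma_\ell$ (not its limit), and write $\Gamma_\ell-q_\ell J=P_+-P_-$ with $P_\pm\succeq 0$. Regularity gives $\|P_+\|=\max\{0,\lambda_2(\Gamma_\ell)\}$. Since $\sum_{u,v}(P_-)_{uv}L_vL_u^\ast\succeq 0$, the Toeplitz relation yields the one-sided operator inequality
\[
L^\ast L\preceq I+q_\ell\,LL^\ast+\frac1N\sum_{u,v}(P_+)_{uv}L_vL_u^\ast\preceq\Bigl(1+\frac{\omega(\Gamma_\ell)\max\{0,\lambda_2(\Gamma_\ell)\}}{N}\Bigr)I+q_\ell\,LL^\ast .
\]
Testing against unit vectors gives $(1-q_\ell)\|L\|^2\le 1+\omega(\Gamma_\ell)\max\{0,\lambda_2(\Gamma_\ell)\}/N=1+o(1)$. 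Hence $\limsup_\ell\|\Sigma(\Gamma_\ell)\|\le 2/\sqrt{1-q}$.

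This is precisely the bound of Proposition~\ref{prop:pseudoRandomUpperBound}, which the paper cites and applies with $q=q_\ell$. It also explains the asymmetry in~\eqref{eq:pseudoRandomCondition} that you attributed to a special treatment of negative eigenvalues: the $|\lambda_N|$ term is used only in the lower bound, through Lemma~\ref{lem:quasiRandomEquivalence}, and plays no role in the upper bound. If you decompose with the limit $q$ instead of $q_\ell$, absorb the constant-eigenvector piece $(q_\ell-q)LL^\ast$ exactly. Routing it through the clique bound would cost $\omega|q_\ell-q|$, which need not vanish.
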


Recall that by Lemma~\ref{lem:supportQGaussian}, the limit is precisely
the edge of the support of the $q$-Gaussian distribution. 
The proof of Proposition~\ref{prop:pseudoRandom} divides
into a lower bound, which follows from weak
convergence, and an upper bound.
For the latter part, existing 
Khintchine inequalities for $\Gamma$-independent semicircular systems~\cite{collinsEigenvalueBound,youssefTuran} are not sufficiently
sharp.

Instead, we exploit the fact that
our graphs have small second eigenvalue
and use the following bound:

\begin{proposition}[\cite{BKNPW2026-Paper2}]\label{prop:pseudoRandomUpperBound}
    For any graph $\Gamma$ and $q\in [0,1)$,
    \[
        \|\Sigma(\Gamma)\|\le \frac 2 {\sqrt{1-q}} \sqrt{1 + \frac {\omega(\Gamma) \max\{0,\lambda_1(\Gamma-qJ)\}} {|V(\Gamma)|}}\,,
    \]
    where $J$ denotes the all-ones matrix.
\end{proposition}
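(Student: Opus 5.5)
The plan is to bound $\|\Sigma(\Gamma)\|$ through the single creation-type operator $L\defeq \sum_{u\in V(\Gamma)} L_u$, with $N\defeq |V(\Gamma)|$. Since $\Sigma(\Gamma) = N^{-1/2}(L+L^\ast)$, the triangle inequality gives $\|\Sigma(\Gamma)\|\le 2\|L\|/\sqrt N$. It therefore suffices to show
\[
    \|L\|^2\le \frac{N+\omega(\Gamma)\max\{0,\lambda_1(\Gamma-qJ)\}}{1-q}\,.
\]
I would derive this as a self-improving inequality for $x\defeq\|L\|^2=\|L^\ast L\|=\|LL^\ast\|$.

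\textbf{Step 1: Toeplitz expansion.} Expanding $L^\ast L=\sum_{u,v}L_u^\ast L_v$ and using the relations~\eqref{eq:toeplitz} gives
\[
    L^\ast L = N\cdot I + \sum_{u,v}\Gamma_{uv}\, L_vL_u^\ast\,.
\]
To control the second term, introduce the row operator $R:\C^{N}\otimes\ell^2(\calT)\to\ell^2(\calT)$ defined by $R(e_u\otimes\xi)=L_u\xi$. Then
\[
    \sum_{u,v}\Gamma_{uv}L_vL_u^\ast = R(\Gamma\otimes I)R^\ast\,.
\]
This uses that $\Gamma$ is real symmetric, so the index order is immaterial.

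\textbf{Step 2: matrix comparison.} Write $\lambda_+\defeq\max\{0,\lambda_1(\Gamma-qJ)\}$. The matrix inequality $\Gamma-qJ\preceq\lambda_+ I_N$ yields $\Gamma\otimes I\preceq qJ\otimes I+\lambda_+ I$. Conjugating by $R$ preserves the order, so
\[
    R(\Gamma\otimes I)R^\ast \preceq q\,R(J\otimes I)R^\ast+\lambda_+RR^\ast = q\,LL^\ast+\lambda_+\sum_u L_uL_u^\ast\,.
\]

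\textbf{Step 3: clique bound.} Each $L_uL_u^\ast$ is the orthogonal projection onto $\mathrm{span}\{\delta_w : w=w'\cdot u\}$, and all of these projections are diagonal in the basis $\{\delta_w\}$. A trace-monoid element $w$ can be written as $w'\cdot u$ for several letters $u$ only if those letters pairwise commute, that is, only if they form a clique in $\Gamma$. Hence $\sum_u L_uL_u^\ast\preceq\omega(\Gamma)I$. I expect this to be the only point needing care: one must check that if $w=w'\cdot u=w''\cdot v$ with $u\neq v$, then $u$ and $v$ are adjacent. This follows from the standard Cartier–Foata / Levi-type lemma for trace monoids, namely that two distinct last letters of a trace must commute.

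\textbf{Step 4: conclusion.} Combining Steps 1–3 gives
\[
    L^\ast L\preceq (N+\lambda_+\omega(\Gamma))I+q\,LL^\ast\,.
\]
Taking norms, and using that $L^\ast L\succeq 0$ so that its norm is monotone under $\preceq$ with positive right-hand side, we obtain $x\le N+\lambda_+\omega(\Gamma)+qx$. Since $q<1$ and $x<\infty$ (each $L_u$ is an isometry, so $x\le N^2$), this rearranges to the claimed bound on $\|L\|^2$. Substituting into $\|\Sigma(\Gamma)\|\le 2\|L\|/\sqrt N$ gives exactly the inequality of Proposition~\ref{prop:pseudoRandomUpperBound}.
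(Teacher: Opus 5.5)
Your proof is correct, and it takes a genuinely different route from the paper. The paper contains no argument for this statement: it imports the bound as Corollary 5.3 of the companion work \cite{BKNPW2026-Paper2}, where it appears as a special case of general Lehner-type norm bounds for operator-valued sums of the form~\eqref{eq:gammaFree}.

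You instead run the classical Bo\.zejko--Speicher argument on $L=\sum_u L_u$. That argument gives $\|a\|^2\le (1-q)^{-1}$ for a $q$-creation operator from the relation $a^\ast a = I + q\,aa^\ast$. Your version proceeds in three moves:
\begin{itemize}
\item The Toeplitz relations~\eqref{eq:toeplitz} give $L^\ast L = N\,I + R(\Gamma\otimes I)R^\ast$.
\item The comparison $\Gamma\preceq qJ+\lambda_+ I_N$, with $\lambda_+=\max\{0,\lambda_1(\Gamma-qJ)\}$, turns the second term into $q\,LL^\ast$ plus $\lambda_+\sum_u L_uL_u^\ast$.
\item That error term is at most $\lambda_+\omega(\Gamma) I$, because $\sum_u L_uL_u^\ast$ is diagonal in $\{\delta_w\}$ with entry equal to the number of possible last letters of $w$, and these form a clique.
\end{itemize}

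All steps check out. This includes the need for $\lambda_+\ge 0$ in Step 3, which is exactly where the $\max\{0,\cdot\}$ comes from. It also includes the a priori bound $\|L\|\le N$, which justifies rearranging $x\le N+\omega(\Gamma)\lambda_+ + qx$.

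The lemma you deferred to the literature has a one-line proof. If $u\neq v$ are non-adjacent, projecting onto the subalphabet $\{u,v\}$ is well defined on traces and lands in the free monoid on $\{u,v\}$. Hence $w'\cdot u$ and $w''\cdot v$ project to words with different last letters and cannot coincide.

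Your route gives a short, self-contained proof that uses only the Fock-space model of Definition~\ref{def:gammaSystem}, and it makes transparent where $\omega(\Gamma)$ and $\lambda_1(\Gamma-qJ)$ enter. The companion paper's framework is heavier, but it is built to handle matrix coefficients as in~\eqref{eq:gammaFree}, which your scalar argument does not directly address.
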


\noindent The proof of Proposition~\ref{prop:pseudoRandomUpperBound} can be found in the parallel work~\cite[Corollary 5.3]{BKNPW2026-Paper2}.

\begin{proof}[Proof of Proposition~\ref{prop:pseudoRandom} from Proposition~\ref{prop:pseudoRandomUpperBound}]
    The same argument as in Lemma~\ref{lem:qGaussianWeak} shows weak
    convergence: by Lemma~\ref{lem:momentsGamma},
    \[
        \tau(\Sigma(\Gamma_\ell)^p) = \frac 1 {|V(\Gamma_\ell)|^{p/2}} \sum_{\pi \in \calP(p)} |\Hom(G_\pi,\Gamma_\ell)|\,.
    \]
    The assumption~\eqref{eq:pseudoRandomCondition}
    is stronger than that of Lemma~\ref{lem:quasiRandomEquivalence}, so
    we obtain
    \[
        \tau(\Sigma(\Gamma_\ell)^p) \underset{\ell\to\infty}\longrightarrow \sum_{\pi \in \calP(p)} q^{|E(G_\pi)|} = \tau(S^p)\,,
    \]
    where $S$ is a single $q$-Gaussian noncommutative random variable
    defined through Theorem~\ref{thm:existenceToeplitz} (see Lemma~\ref{lem:momentsQGaussian} for the 
    associated moment formula). Weak convergence directly implies a lower bound on the norm. Indeed, by~\eqref{eq:faithfulAutomatic},
    for any fixed $p\ge 1$,
    \[
        \|\Sigma(\Gamma_\ell)\|\ge \tau \left(\Sigma(\Gamma_\ell)^{2p}\right)^{\frac 1 {2p}}\underset{\ell\to\infty}\longrightarrow \tau(S^{2p})^{\frac 1{2p}}\,.
    \]
    By Lemmas~\ref{lem:faithful} and~\ref{lem:supportQGaussian}, taking the $p\to\infty$ limit on the right-hand side yields
    \[
        \liminf_{\ell\to\infty} \|\Sigma(\Gamma_\ell)\|\ge \frac 2 {\sqrt{1-q}}\,.
    \]
    To prove the matching upper bound, 
    we use Proposition~\ref{prop:pseudoRandomUpperBound}. Let $q_\ell$ be the edge density of $\Gamma_\ell$, so that $q_\ell$ converges to $q$. Since $\Gamma_\ell$ is regular,
    we have $\max\{0,\lambda_2(\Gamma_\ell)\}=\lambda_1(\Gamma_\ell-q_\ell J)$. Combining Proposition~\ref{prop:pseudoRandomUpperBound} with the assumption~\eqref{eq:pseudoRandomCondition},
    we obtain
    \begin{equation*}
        \limsup_{\ell\to\infty} \|\Sigma(\Gamma_\ell)\|\le \frac 2 {\sqrt {1-q}}\,.
    \end{equation*}
    This concludes the proof.
\end{proof}

\subsection{Applications to the Kneser graph}

The graph $K(\ell,r)$ has $N_\ell \defeq \binom \ell r$ vertices, and a clique is a collection of disjoint sets of $[\ell]$ of size $r$, so its maximum clique size is $\lfloor \ell/r\rfloor$. 
Combining this with the eigenvalues of the Kneser graph from Lemma~\ref{lem:eigenvaluesKneser},
\begin{align*}
    \frac {\omega(K(\ell,r))\max\{0,\lambda_2(K(\ell,r))\}} {N_\ell} &= \frac {\lfloor \ell/r\rfloor \binom {\ell-r-2} {r-2}} {\binom \ell r}\le \frac {r-1} {\ell-1}\,,\\
    \frac {|\lambda_{N_\ell}(K(\ell,r))|} {N_\ell} &= \frac {\binom {\ell-r-1} {r-1}} {\binom \ell r} \le \frac r \ell\,.
\end{align*}

\begin{proposition}\label{prop:upperBoundKneser}
    $\|\Sigma(K(\ell,r))\| \asymp \max\left\{1,\frac{\sqrt \ell} r\right\}$
\end{proposition}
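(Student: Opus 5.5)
We need to prove that $\|\Sigma(K(\ell,r))\|\asymp\max\{1,\sqrt\ell/r\}$ for all $1\le r\le\ell$; here $\Sigma(K(\ell,r))$ coincides with $\Sigma$ from~\eqref{eq:limitOperator}.

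\textbf{Lower bound.} We use~\eqref{eq:faithfulAutomatic} in two ways. First, $\|\Sigma\|\ge\tau(\Sigma^2)^{1/2}=1$, since every single-edge pairing has $|\Hom|=\binom\ell r$. Second, when $\ell\ge 2r^2$ we use~\eqref{eq:lowerBoundNorm} with $p=\lfloor \ell/(2r^2)\rfloor\ge1$. For $i<p$ we have $ir\le \ell/(2r)$, so $\binom{\ell-ir}{r}/\binom\ell r\ge(1-(ir+r)/\ell)^r\ge(1-1/r)^r\cdot$const, which is bounded below by a constant $c>0$ once $ir+r\le \ell/r$. This gives $\tau(\Sigma^{2p})^{1/(2p)}\ge (p!\,c^p)^{1/(2p)}\gtrsim\sqrt p\asymp\sqrt\ell/r$. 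The counting behind~\eqref{eq:lowerBoundNorm} comes from Lemma~\ref{lem:momentsGamma}: take the pairings that open $p$ pairs and then close them in an arbitrary order, labelled by pairwise disjoint $r$-sets (all crossings are then allowed). I would check this count briefly, but it is routine.

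\textbf{Upper bound.} We apply Proposition~\ref{prop:pseudoRandomUpperBound} to $\Gamma=K(\ell,r)$ with $q=q_\ell=\binom{\ell-r}r/\binom\ell r$ (set $q=0$ when $\ell<2r$, where the graph is empty and $\|\Sigma\|=2$). By regularity, $\lambda_1(\Gamma-qJ)=\max\{0,\lambda_2\}$, and the computation preceding the statement gives $\omega\lambda_2/N_\ell\le (r-1)/(\ell-1)\le1$. Hence
\[
\|\Sigma\|\le \frac{2\sqrt2}{\sqrt{1-q}}.
\]
The remaining task is to bound $1-q$ from below. We have
\[
q=\prod_{i=0}^{r-1}\frac{\ell-r-i}{\ell-i}\le(1-r/\ell)^r\le e^{-r^2/\ell}.
\]
If $r^2\ge\ell$, then $1-q\ge1-e^{-1}$ and $\|\Sigma\|=O(1)$. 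If $r^2<\ell$, then $1-e^{-x}\ge x/2$ for $x\le1$, so $1-q\ge r^2/(2\ell)$ and $\|\Sigma\|\lesssim\sqrt\ell/r$. Combining the two regimes gives $\|\Sigma\|\lesssim\max\{1,\sqrt\ell/r\}$.

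The main point needing care is the lower bound computation in the regime $r^2\le \ell<2r^2$, where the target is only a constant; that case is already covered by $\|\Sigma\|\ge1$, so no further obstacle is expected. The only genuinely external input is Proposition~\ref{prop:pseudoRandomUpperBound}, which we take as given.
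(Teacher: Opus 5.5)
Your proof is correct and follows the paper's argument. The lower bound uses $\tau(\Sigma^2)=1$ together with \eqref{eq:lowerBoundNorm}, and the upper bound uses Proposition~\ref{prop:pseudoRandomUpperBound} with $q$ the edge density, the Kneser eigenvalue computation, and the estimate $1-q\gtrsim\min\{1,r^2/\ell\}$, which you make explicit where the paper says ``one can verify''. One small slip: the factor $(1-1/r)^r$ vanishes at $r=1$, but your own margin $(i+1)r\le \ell/(2r)$ gives $(1-1/(2r))^r\ge 1/2$, which is what you actually need.
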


\begin{proof}
    The edge density of $K(\ell,r)$ is
    $q \defeq {\binom {\ell-r} r}/{\binom \ell r}$, from which one can verify that
    \[
        \frac 2 {\sqrt{1-q}} \asymp \max\left\{1,\frac {\sqrt \ell} r\right\}\,.
    \]
    Proposition~\ref{prop:pseudoRandomUpperBound} then implies the desired upper bound. For the lower bound, the normalization $\tau(\Sigma(K(\ell,r))^2)=1$ gives the constant term, and the inequality~\eqref{eq:lowerBoundNorm}
    gives the other one.
\end{proof} 

\begin{proof}[Proof of Theorem~\ref{thm:qGaussianConvergence}]
Lemma~\ref{lem:qGaussianWeak} combined with Theorem~\ref{thm:weakConvergence} directly implies the weak convergence assertion in Theorem~\ref{thm:qGaussianConvergence}.
Theorem~\ref{thm:directedIntro} shows that
$\|X_n\|\to \|\Sigma(K(\ell,r))\|$ in probability as $n\to\infty$. 
In the regime $r^2/\ell\to \alpha$, condition~\eqref{eq:pseudoRandomCondition} 
is verified and Proposition~\ref{prop:pseudoRandom} implies that 
\[
    \|\Sigma(K(\ell,r))\|\underset{\ell\to\infty}\longrightarrow \frac 2 {\sqrt{1-e^{-\alpha}}}\,,
\]
so $\|X_n\|$ also converges in probability
to that constant in the double limit $n\to\infty$, then $\ell\to\infty$.
We generalize this to any polynomial $P$.
Letting $\rho=2/\sqrt{1-e^{-\alpha}}$ and $S$ be a $e^{-\alpha}$-Gaussian variable,
Lemmas~\ref{lem:supportQGaussian} and~\ref{lem:faithful} give
\[
    \|P(S)\| = \lim_{p\to\infty} \tau((P(S)^\ast P(S))^p)^{\frac 1 {2p}} = \lim_{p\to\infty}  \left(\int |P(x)|^{2p} \diff \mu_{e^{-\alpha}}\right)^{\frac 1 {2p}} = \max_{|x|\le \rho} |P(x)|\,.
\]
Since $\|P(X_n)\|\le \max_{|x|\le \|X_n\|} |P(x)|$, we obtain for every
fixed $\eps>0$
\[
    \limsup_{\ell\to\infty} \limsup_{n\to\infty} \Pr\left(\|P(X_n)\| > \|P(S)\|+\eps\right) = 0\,.
\]
The matching lower bound follows from weak
convergence: for every fixed $p\ge 1$,
\[
    \|P(X_n)\|^{2p}\ge \tr \left((P(X_n)^\ast P(X_n))^p\right)\,,
\]
which converges in the double limit in probability to $\tau\left((P(S)^\ast P(S))^p\right)$.
Taking the further limit $p\to\infty$ 
and using Lemma~\ref{lem:faithful} concludes the proof of Theorem~\ref{thm:qGaussianConvergence}.
\end{proof}

\bibliographystyle{alpha}
{\small \bibliography{main}}

@inproceedings{bandeira2025randommatrices,
  author        = {Bandeira, Afonso S.},
  title         = {Random Matrices, Intrinsic Freeness, and Sharp
                   Non-Asymptotic Inequalities},
booktitle = {Proceedings of the International Congress of Mathematicians (ICM)},
  year = 2026,
  pages = {289--309}
}

@inproceedings{vanhandel2025strongconvergence,
  author        = {Ramon van Handel},
  title         = {Strong Convergence: A Short Survey},
  booktitle = {Proceedings of the International Congress of Mathematicians (ICM)},
  pages = {145--165},
  year = 2026
}

@article{bandeira2025randomstrasse101,
  author        = {Bandeira, Afonso S. and Kireeva, Anastasia and
                   Maillard, Antoine and R{\"o}dder, Almut},
  title         = {{Randomstrasse101}: Open Problems of 2024},
  journal = {arXiv:2504.20539},
  year          = {2025}
}

@phdthesis{Manohar-2024-KikuchiMatrixMethod,
  author={Manohar, Peter},
  title={New Spectral Techniques in Algorithms, Combinatorics, and Coding Theory: The {Kikuchi} Matrix Method},
  school={Carnegie Mellon University},
  year={2024}
}

@article {BBvH23,
    AUTHOR = {Bandeira, Afonso S. and Boedihardjo, March T. and van Handel,
              Ramon},
     TITLE = {Matrix concentration inequalities and free probability},
   JOURNAL = {Inventiones Mathematicae},
    VOLUME = {234},
      YEAR = {2023},
    NUMBER = {1},
     PAGES = {419--487}
}

@article{CGVTVH25,
      title={{A new approach to strong convergence}}, 
      author={Chen, Chi-Fang and {Garza-Vargas}, Jorge  and Tropp, Joel A. and van Handel, Ramon },
      journal = {Annals of Mathematics},
      year={2026},
      volume = 203,
      number = 2,
      pages = {555--602}
}

@article{BCSvH24,
      title={Matrix Concentration Inequalities and Free Probability {II}. {T}wo-sided Bounds and Applications}, 
      author={Afonso S. Bandeira and Giorgio Cipolloni and Dominik Schröder and Ramon van Handel},
      year={2026},
      journal = {Communications of the American Mathematical Society},
      volume = 6,
      pages = {896--946}
}

@book{NS06, 
    AUTHOR = {Nica, Alexandru and Speicher, Roland},
     TITLE = {Lectures on the combinatorics of free probability},
 PUBLISHER = {Cambridge University Press},
      YEAR = {2006}
}

@article {HT05,
    AUTHOR = {Haagerup, Uffe and Thorbj{\o}rnsen, Steen},
     TITLE = {{A new application of random matrices: {${\rm Ext}(C^*_{\rm red}(F_2))$} is not a group}},
   JOURNAL = {Annals of Mathematics},
    VOLUME = {162},
      YEAR = {2005},
    NUMBER = {2},
     PAGES = {711--775}
}

@article{CGVvH24,
      title={A New Approach to Strong Convergence {II}. {The} Classical Ensembles}, 
      author={{Chi-Fang} Chen and Jorge {Garza-Vargas} and Ramon van Handel},
      year={2026},
      journal = {Geometric and Functional Analysis},
      volume = 36,
      pages = {831--890}
}

@inproceedings{HsiehKothariMohanty2022simpleMoore,
  author    = {Jun-Ting Hsieh and
               Pravesh K. Kothari and
               Sidhanth Mohanty},
  title     = {A Simple and Sharper Proof of the Hypergraph {Moore} Bound},
  booktitle = {Symposium on
               Discrete Algorithms (SODA)},
  pages     = {2324--2344},
  year      = {2023}
}

@inproceedings{GuruswamiKothariManohar2022,
  author    = {Venkatesan Guruswami and
               Pravesh K. Kothari and
               Peter Manohar},
  title     = {Algorithms and Certificates for {Boolean} {CSP} Refutation:
               ``Smoothed is No Harder than Random''},
  booktitle = {Symposium on Theory of Computing (STOC)},
  pages     = {678--689},
  year      = {2022}
}

@article{BKNPW2026-Moore,
  title         = {The Hypergraph {M}oore Bound},
  author        = {Bandeira, Afonso S. and Kunisky, Dmitriy and {Nizi{\'c}-Nikolac}, Petar and Pesenti, Lucas and Wang, Robert},
  year          = {2026},
  journal          = {arXiv:2607.14068v2}
}

@unpublished{BKNPW2026-Paper2,
  title         = {Matrix Concentration and Equivalent Operators on {Fock} Spaces},
  author        = {Bandeira, Afonso S. and Kunisky, Dmitriy and {Nizi{\'c}-Nikolac}, Petar and Pesenti, Lucas and Wang, Robert},
  note = {arXiv},
  year          = {2026}
}

@inproceedings{kothari2025smoothtradeofftensorpca,
      title={Smooth Trade-off for Tensor {PCA} via Sharp Bounds for {K}ikuchi Matrices}, 
      author={Pravesh K. Kothari and Jeff Xu},
      year={2026},
      booktitle = {Symposium on Discrete Algorithms (SODA)},
      pages = {2617--2632}
}

@article{graphonCLT1,
    title = {Graphon-Theoretic Approach to Central Limit Theorems for $\epsilon$-Independence},
    author = {Cébron, Guillaume and {Oliveira Santos}, Patrick and Youssef, Pierre},
    journal = {arXiv:2411.13062},
    year = 2024
}

@article{proofGammaRepresentation,
    author = {Marek Bożejko and Roland Speicher},
    title = {{Completely positive maps on Coxeter groups, deformed commutation relations, and operator spaces}},
    volume = 300,
    pages = {97--120},
    year = 1994,
    journal = {Mathematische Annalen}
}

@article{collinsEigenvalueBound,
    author = {Benoît Collins and  Akihiro Miyagawa},
    title = {{Operator-valued Khintchine inequality for $\varepsilon$-free semicircles}},
    journal = {Bulletin of the London Mathematical Society},
    volume = 58,
    number = 1,
    pages = {e70229},
    year = 2026
}

@article{youssefTuran,
    title = {{Khintchine inequalities, trace monoids and Turán-type problems}},
    author = {Patrick {Oliveira Santos} and Raghavendra Tripathi and Pierre Youssef},
    journal = {arXiv:2506.02517},
    year = 2025
}

@article{gammaIndependenceDefinition,
    title = {{Mixtures of classical and free independence}},
    author = {Roland Speicher and Janusz Wysoczański},
    journal = {Archiv der Mathematik},
    year = 2016,
    volume = 107,
    pages = {445--453}
}

@inproceedings{WeinElAlaouiMoore2019,
  author    = {Alexander S. Wein and Ahmed {El Alaoui} and Cristopher Moore},
  title     = {The {Kikuchi} Hierarchy and Tensor {PCA}},
  booktitle = {Symposium on
               Foundations of Computer Science (FOCS)},
  pages     = {1446--1468},
  year      = {2019}
}

@article{NCK,
    title = {{Non commutative Khintchine and Paley inequalities}},
    author = {Françoise {Lust-Piquard} and Gilles Pisier},
    year = 1991,
    volume = 29,
    pages = {241--260},
    journal = {Arkiv för Matematik}
}

@article{charlesworthCollins,
    title = {{Matrix models for $\varepsilon$-free independence}},
    author = {Ian Charlesworth and Benoît Collins},
    journal = {Archiv der Mathematik},
    year = 2021,
    volume = 116,
    pages = {585--600}
}

@article{lambdaFreeness,
    title = {{$\Lambda$-free probability}},
    author = {M{\l}otkowski, Wojciech},
    journal = {Infinite Dimensional Analysis, Quantum Probability and Related Topics},
    volume = 7,
    number = 1,
    year = 2004,
    pages = {27--41}
}

@inproceedings{tensorPCADefinition,
    title = {{A statistical model for tensor PCA}},
    author = {Andrea Montanari and Emile Richard},
    booktitle = {Advances in Neural Information Processing Systems (NIPS)},
    year = 2014,
    volume = 27
}

@article{frischBourret,
    title = {{Parastochastics}},
    author = {Uriel Frisch and Richard Bourret},
    journal = {Journal of Mathematical Physics},
    volume = 11,
    year = 1970,
    pages = {364--390}
}

@article{sniadyModel,
    title = {Gaussian Random Matrix Models for $q$-deformed {G}aussian Variables},
    author = {{\'S}niady, Piotr},
    year = 2001,
    journal = {Communications in Mathematical Physics},
    volume = 216,
    pages = {515--537}
}

@article{strongQGaussian,
    title = {{Strongly convergent matrix models for $q$-Gaussian algebras}},
    author = {Martijn Caspers and Enli Chen},
    year = 2026,
    journal = {arXiv:2607.15100}
}

@article{mageeThomas,
    title = {{Strongly convergent unitary representations of right-angled Artin groups}},
    author = {Michael Magee and Joe Thomas},
    journal = {Duke Mathematical Journal},
    year = 2026,
    volume = 175,
    issue = 7,
    pages = {1305--1343}
}

@article{chungGramWilson,
    title = {{Quasi-random graphs}},
    author = {Fan R. K. Chung and Ronald L. Graham and Richard M. Wilson},
    journal = {Combinatorica},
    year = 1989,
    volume = 9,
    pages = {345--362}
}

@book{godsilCombinatorics,
    title = {Algebraic Graph Theory},
    author = {Chris Godsil and Gordon Royle},
    publisher = {Springer},
    year = 2001
}

@article{brownianBozejko,
    title = {An Example of a Generalized {B}rownian Motion},
    author = {Marek Bożejko and Roland Speicher},
    journal = {Communications in Mathematical Physics},
    volume = 137,
    number = 3,
    pages = {519--531},
    year = 1991
}

@incollection{qGaussianSupport,
  author    = {Bożejko, Marek and Speicher, Roland},
  title     = {An Example of a Generalized {B}rownian Motion {II}},
  booktitle = {Quantum Probability and Related Topics},
  volume    = {7},
  pages     = {67--77},
  publisher = {World Scientific},
  year      = {1992}
}

@incollection{pisierXu,
    author = {Gilles Pisier and Quanhua Xu},
    title = {Non-commutative {$L^p$-spaces}},
    booktitle = {Handbook of the Geometry of Banach Spaces},
    volume = 2,
    publisher = {North-Holland},
    pages = {1459--1517},
    year = 2003
}

@article{tensorPCAQuantum,
    title = {Classical and Quantum Algorithms for Tensor Principal Component Analysis},
    author = {Matthew B. Hastings},
    journal = {Quantum},
    volume = 4,
    pages = {237},
    year = 2020
}

@article{speicherCLT,
    title = {{A non-commutative central limit theorem}},
    author = {Roland Speicher},
    journal = {Mathematische Zeitschrift},
    volume = 209,
    pages = {55--66},
    year = 1992
}

@article{freeMonotoneTransport,
    title = {{Free monotone transport}},
    author = {Alice Guionnet and Dimitri Shlyakhtenko},
    journal = {Inventiones Mathematicae},
    year = 2014,
    volume = 197,
    pages = {613--661}
}

@article{qGaussianReloaded,
    title = {$q$-{G}aussian Processes: Non-commutative and Classical Aspects},
    author = {Marek Bożejko and Burkhard Kümmerer and Roland Speicher},
    year = 1997,
    volume = 185,
    pages = {129--154},
    journal = {Communications in Mathematical Physics}
}

@article{schmidhuberHastings2026,
    title = {A Spectral Proof of the Hypergraph {M}oore Bound},
    author = {Alexander Schmidhuber and Matthew B. Hastings},
    journal = {arXiv:2607.26028},
    year = 2026
}

\end{document}